\pgfplotsset{compat=newest}
\definecolor{c595959}{RGB}{89,89,89}
\definecolor{c5a5a5a}{RGB}{90,90,90}
\setlist[enumerate]{itemsep=0mm}
\theoremstyle{plain}
\declaretheorem[title=Theorem, parent=section]{theorem}
\declaretheorem[title=Lemma,sibling=theorem]{lemma}
\declaretheorem[title=Proposition,sibling=theorem]{proposition}
\declaretheorem[title=Corollary,sibling=theorem]{corollary}
\theoremstyle{definition}
\declaretheorem[title=Definition,sibling=theorem]{definition}
\declaretheorem[title=Remark,sibling=theorem]{remark}
\declaretheorem[title=Remark, numbered=no]{remark*}
\declaretheorem[title=Assumption, numbered=no]{assumption*}
\numberwithin{equation}{section}
\newcommand{\N}{\mathds{N}}
\newcommand{\R}{\mathds{R}}
\newcommand{\fd}{\mathfrak{d}}
\def\hmath$#1${\texorpdfstring{{\rmfamily\textit{#1}}}{#1}}
\newcommand{\cB}{\mathcal{B}}
\newcommand{\cE}{\mathcal{E}}
\newcommand{\eps}{\varepsilon}
\newcommand{\loc}{\mathrm{loc}}
\newcommand{\BIGOP}[1]
{
	\mathop{\mathchoice%
		{\raise-0.22em\hbox{\huge $#1$}}%
		{\raise-0.05em\hbox{\Large $#1$}}{\hbox{\large $#1$}}{#1}}}
\def\XXint#1#2#3{{\setbox0=\hbox{$#1{#2#3}{\int}$}
		\vcenter{\hbox{$#2#3$}}\kern-.5\wd0}}
\newcommand{\BIGboxplus}{\mathop{\mathchoice%
		{\raise-0.35em\hbox{\huge $\boxplus$}}%
		{\raise-0.15em\hbox{\Large $\boxplus$}}{\hbox{\large $\boxplus$}}{\boxplus}}}
\newcommand{\I}[1]{(\text{I}_{#1})}
\newcommand{\II} [1] {(\text{II}_{#1})}
\newcommand{\III}[1]{(\text{III}_{#1})}
\DeclareMathOperator{\dist}{dist}
\DeclareMathOperator{\diam}{diam}
\DeclareMathOperator{\supp}{supp}
\DeclareMathOperator{\tail}{Tail}
\DeclareMathOperator{\pv}{p.v.}
\renewcommand{\d}{\textnormal{d}}
\newcommand{\1}{{\mathbbm{1}}}
\newcommand{\norm}[1]{\left\lVert#1\right\rVert} 
\newcommand{\abs}[1]{\ensuremath{\left\vert#1\right\vert}} 
\newcommand{\distw}[2]{\dist(#1, #2)} 
\providecommand\@dotsep{5}
\def\listtodoname{List of Todos}
\def\listoftodos{\@starttoc{tdo}\listtodoname}
\def\namedlabel#1#2{\begingroup
	#2%
	\def\@currentlabel{#2}%
	\phantomsection\label{#1}\endgroup
}
\newcommand{\cdini}{C^{1,\text{\normalfont{dini}}}}
\begin{document}
	\allowdisplaybreaks
	\title{Boundary regularity and Hopf lemma for nondegenerate stable operators}
	
	\author{Florian Grube}
	
	\address{Fakult{\"a}t f{\"u}r Mathematik, Universit{\"a}t Bielefeld, Postfach 10 01 31, 33501 Bielefeld, Germany}
	\email{fgrube@math.uni-bielefeld.de}
	
	\makeatletter
	\@namedef{subjclassname@2020}{%
		\textup{2020} Mathematics Subject Classification}
	\makeatother
	
	\subjclass[2020]{47G20, 35B65, 35S15, 35R09, 60G52}
	
	\keywords{Nonlocal, boundary regularity, stable operator, fractional Laplace, Dini continuity}

	\begin{abstract} 
		We prove sharp boundary Hölder regularity for solutions to equations involving stable integro-differential operators in bounded open sets satisfying the exterior $\cdini$-property. This result is new even for the fractional Laplacian. A Hopf-type boundary lemma is proven, too. An additional feature of this work is that the regularity estimate is robust as $s\to 1-$ and we recover the classical results for second order equations.
	\end{abstract}

	\maketitle
	
	\section{Introduction}
	
	The study of boundary regularity for solutions to partial differential equations involving second order operators has been a consistent topic of research in the last hundred years. 
	
	In particular, it is well known that solutions to the problem 
	\begin{equation}\label{eq:laplace}
		\begin{split}
			-\Delta u &= f \text{ in }\Omega,\\
			u&= 0 \text{ on }\partial \Omega
		\end{split}
	\end{equation}
	for a sufficiently regular domain $\Omega\subset \R^d$ are Lipschitz continuous up to the boundary. However, the regularity of the boundary $\partial \Omega$ plays a key in this result. In particular, if $\partial \Omega$ is only Lipschitz continuous, then the solution $u$ is at most Hölder continuous for some Hölder exponent, see \cite[p.71]{FeRo22}. This raises the question: what regularity of $\partial \Omega$ is required such that solutions are still globally Lipschitz? In the late 70s and 80s the optimal condition was found. In \cite{KaKh73, KaHi74} it was shown that under the assumption that $\Omega$ satisfies an exterior $\cdini$-property at every boundary point, the solution to \eqref{eq:laplace} is Lipschitz continuous on $\overline{\Omega}$. Moreover, as is proven in \cite[Theorem 2]{KaHi74}, this property is not just sufficient but also necessary. \medskip
	
	In this article, we study nondegenerate $2s$-stable integro-differential operators
	\begin{equation}\label{eq:stable_op}
		\begin{split}
			A_\mu^s u(x)&:= \pv \int\limits_{\R^d} \big(u(x)-u(x+h)\big) \nu_s(\d h)\quad \text{ for }u:\R^d\to \R, \\
			\nu_s(U)&:=(1-s)\int\limits_{\R}\int\limits_{S^{d-1}} \1_{U}(x+r\theta)\mu(\d \theta)\d r\quad \text{ for } U\in \cB(\R^d).
		\end{split}
	\end{equation}
	Here, $s\in (0,1)$, $\mu$ is a finite measure on the unit sphere $S^{d-1}$, i.e.\ there exists $\Lambda>0$ such that $\mu(S^{d-1})\le \Lambda$, and we assume that $\mu$ satisfies the nondegeneracy assumption 
	\begin{equation}\label{eq:non-degenerate}
		\lambda\le \inf\limits_{w\in S^{d-1}} \int\limits_{S^{d-1}} \abs{w\cdot \theta}^{2s}\mu (\d \theta)
	\end{equation}
	for some positive $\lambda$. These operators are translation invariant, symmetric, and scale with a factor of power $2s$, i.e.\ $A_\mu^s[u(r\cdot)](x)=r^{2s}A_\mu^su(rx)$. They are nonlocal in the sense that for a function $u:\R^d\to \R$ the support of $A_\mu^su$ may not be a subset of the support of $u$ itself. \smallskip
	
	The class of nondegenerate $2s$-stable operators includes the fractional Laplacian $(-\Delta)^s$, when $\mu$ is the uniform distribution. Moreover, the nondegeneracy condition \eqref{eq:non-degenerate} together with the finiteness of $\mu$ yields the comparability of the Fourier-symbol of $A_\mu^s$ with that of the fractional Laplacian, namely $\abs{\xi}^{2s}$. Another example in the class of operators under consideration is the sum of $1$-dimensional fractional Laplacians $\sum_{i=1}^d (-\partial_i^2)^s$. This example is recovered when choosing $\mu$ to be a sum of Dirac-measures in all coordinate directions. \smallskip
	
	We investigate the boundary regularity of solutions $u$ to 
	\begin{equation}\label{eq:main_equation_stable}
		\begin{split}
			A_\mu^s u &= f \quad \text{ in }\Omega,\\
			u&=0 \quad \text{ on }\Omega^c,
		\end{split}
	\end{equation}
	where $f\in L^\infty(\Omega)$.\smallskip
	
	In some of the following results, we need the following assumption on the L{\'e}vy-measure $\nu_s$:
	\begin{equation}\label{ass:pUB}\tag{\normalfont{pUB}}
		\begin{split}
			&\text{The measure $\nu_s$ has a density with respect to Lebesgue's measure and}\\
			&\qquad\qquad\qquad\qquad\qquad\nu_s(x)\le (1-s)\Lambda \abs{x}^{-d-2s}.
		\end{split}
	\end{equation}
	
	Now, we state our main results. 
	
	\begin{theorem}[Boundary regularity I]\label{th:boundary_regularity}
		Let $\Omega\subset \R^d$ be a bounded, open set that satisfies the uniform exterior $\cdini$-property. We fix some $s_0\in (0,1)$, $0<\lambda\le \Lambda<\infty$ and assume \eqref{ass:pUB} or $s_0>1/2$. There exists a positive constant $C$ such that for any $s\in[s_0,1)$, any measure $\mu$ on the sphere satisfying \eqref{eq:non-degenerate} and $\mu(S^{d-1})\le \Lambda$, any $f\in L^\infty(\Omega)$, and any weak solution $u$ to \eqref{eq:main_equation_stable} the solution $u$ is Hölder continuous, i.e.\ $u\in C^s(\R^d)$, and satisfies the bound
		\begin{equation}\label{eq:Cs_regularity}
			\norm{u}_{C^{s}(\R^d)}\le C \norm{f}_{L^\infty(\Omega)}.
		\end{equation}
	\end{theorem}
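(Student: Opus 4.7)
The strategy is to combine (i) an interior $C^s$-estimate which degenerates in a controlled way with $\dist(\cdot,\partial\Omega)$ together with (ii) a sharp boundary decay $|u(x)|\le C\dist(x,\partial\Omega)^s \|f\|_{L^\infty(\Omega)}$, and then to glue these by a standard covering argument. For (i), the translation-invariance and $2s$-scaling of $A_\mu^s$, together with the nondegeneracy \eqref{eq:non-degenerate}, yield via Krylov--Safonov-type interior regularity (whose constants are uniform in $s\in[s_0,1)$ thanks to \eqref{ass:pUB} or $s_0>1/2$) an estimate of the form
\[
[u]_{C^s(B_{r/2}(x_0))} \le C\bigl(r^{-s}\|u\|_{L^\infty(\R^d)}+r^s\|f\|_{L^\infty(\Omega)}\bigr),\qquad r:=\dist(x_0,\partial\Omega).
\]

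The heart of the proof lies in (ii), which I would establish by a barrier argument tailored to the exterior $C^{1,\text{dini}}$-property. At any boundary point $x_0\in\partial\Omega$, that property provides, in suitable coordinates, a half-space $\{x\cdot e\le 0\}$ whose intersection with a small ball lies in $\Omega^c$ up to a graph-like perturbation bounded by a Dini modulus $\omega$. The natural model barrier is $\phi_0(x):=(x\cdot e)_+^s$, which by the nondegeneracy \eqref{eq:non-degenerate} satisfies $A_\mu^s\phi_0\equiv c_\mu>0$ on the open half-space $\{x\cdot e>0\}$. To promote this model to a genuine supersolution $\Phi$ on $\Omega\cap B_r(x_0)$ with $\Phi\le C\dist(\cdot,\partial\Omega)^s$, one corrects $\phi_0$ dyadically: at scale $2^{-k}$ the exterior perturbation has size $2^{-k}\omega(2^{-k})$, and a compactly supported correction $\psi_k$ cancelling the resulting error in $A_\mu^s\Phi$ contributes an $L^\infty$-norm of size $\omega(2^{-k})$. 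The Dini integrability $\sum_k\omega(2^{-k})<\infty$ then sums these corrections into a bounded total perturbation, producing a barrier with $A_\mu^s\Phi\ge 1$ in a neighbourhood of $\partial\Omega$ and $\Phi(x)\le C\dist(x,\partial\Omega)^s$. The comparison principle for weak solutions of $A_\mu^s$ then yields $|u|\le\|f\|_{L^\infty(\Omega)}\Phi$.

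With (i) and (ii) in hand, \eqref{eq:Cs_regularity} follows by splitting $(x,y)\in\R^d\times\R^d$ into two regimes based on whether $|x-y|$ is small or large compared with $\dist(x,\partial\Omega)\wedge\dist(y,\partial\Omega)$: in the first one uses (i) on a ball of radius comparable to this distance, in the second one uses the triangle inequality together with the decay in (ii).

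The main obstacle is the barrier in (ii): producing a supersolution whose upper bound is \emph{exactly} $\dist(\cdot,\partial\Omega)^s$, without an additional $\omega$-loss, under only a Dini exterior condition is delicate, and mirrors the subtle dyadic iteration used by Kamynin--Khimchenko \cite{KaHi74} in the local setting, now amplified by the nonlocality of $A_\mu^s$ since each correction contributes not only inside its support but through tail terms across all scales. A secondary obstacle is uniformity of constants as $s\to 1^-$; the $(1-s)$-normalization of $\nu_s$ together with the hypothesis \eqref{ass:pUB} or $s_0>1/2$ is exactly what makes the nonlocal tail contributions integrable uniformly in $s$ and yields the claimed robustness.
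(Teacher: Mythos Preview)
Your overall architecture---interior estimate, boundary decay $|u|\lesssim d_\Omega^s\|f\|_{L^\infty}$, then gluing---matches the paper exactly (this is carried out in \autoref{sec:interior}, \autoref{cor:global_boundary}, and the proof at the end of \autoref{sec:boundary_regularity_dini}). However, your barrier construction contains a genuine error and misses the key idea.

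First, the claim that $A_\mu^s\phi_0\equiv c_\mu>0$ for $\phi_0(x)=(x\cdot e)_+^s$ is false: this function is \emph{$s$-harmonic} in the half-space, i.e.\ $A_\mu^s\phi_0\equiv 0$ there (see \cite[Theorem~2.4.1]{BuVa16} and the computation in the proof of \autoref{prop:almost_harmonic}). So $\phi_0$ is not a supersolution at all, and your dyadic corrections $\psi_k$ have nothing positive to work against; they would have to \emph{create} the supersolution property, not merely preserve it.

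Second, and more fundamentally, the paper explains in Section~1.3 why the obvious barrier corrections fail in $C^{1,\mathrm{dini}}$ domains: one has $|A_\mu^s\fd^s|\lesssim d_\Omega^{-s}\omega(d_\Omega)$, and the standard correction $-c\,d_\Omega^{s+\eps}$ gives $A_\mu^s d_\Omega^{s+\eps}\lesssim -d_\Omega^{\eps-s}$, which cannot dominate $d_\Omega^{-s}\omega(d_\Omega)$ since a Dini modulus may decay slower than any power. The paper's resolution is a \emph{multiplicative} perturbation: the barrier is $b_+=\fd^s-c\,\fd^s\zeta(\fd)$ where $\zeta$ is a carefully designed function (\autoref{lem:zeta_with_pUB}) satisfying $t\zeta'(t)\ge C(t^s+\omega(t))$ together with structural conditions \ref{ass:zeta_pic}--\ref{ass:zeta_growth}. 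The one-dimensional calculation in \autoref{prop:subsolution_d_1} shows $(-\Delta)^s_\R[(\cdot)_+^s\zeta]\le -Ct^{1-s}\zeta'(t)$, and this negative term is exactly what beats the $\omega(d_\Omega)/d_\Omega^s$ error from flattening (\autoref{prop:almost_harmonic}, \autoref{prop:subsolution_d}). Your dyadic-additive scheme does not provide this mechanism, and you have not indicated how the nonlocal cross-scale tails of the $\psi_k$ would be controlled---something the paper handles via the pointwise structure of $\zeta$ rather than by summing localized bumps.
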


	The definition of $\cdini$-domains is given in \autoref{def:c1dini_domain}. The notion of the exterior $\cdini$-property is introduced in \autoref{def:ext_int_property}. 
	
	\autoref{th:boundary_regularity} enhances existing results on boundary regularity which assume a stronger $C^{1,\alpha}$-condition on the boundary of $\Omega$, see \cite{RoFe24, Ros16} and the detailed discussion in \autoref{sec:literature}. Moreover, the estimate \eqref{eq:Cs_regularity} is robust as $s\to 1-$.
	
	Whenever $s$ is smaller or equal $1/2$, we make an interesting observation in the case when the L{\'e}vy-measure $\nu_s$ is singular with respect to Lebesgue's measure, i.e.\ \eqref{ass:pUB} does not hold. In this case, we need a slightly stronger assumption on the set $\Omega$ than the exterior $\cdini$-property. Instead, we introduce the concept of $2s$-$\cdini$-domains, see \autoref{def:c1dini_2s_domain}. This class of domains is only slightly more restrictive than $\cdini$-domains and, in particular, it includes $C^{1,\alpha}$-domains for any $\alpha>0$. This distinction between singular and non-singular L{\'e}vy-measures $\nu_s$ for $s\le 1/2$ is in contrast to previous results on boundary regularity in \cite{Ros16}. 
	
	The following theorem is the resulting boundary regularity in this case. 
	
	\begin{theorem}[Boundary regularity II]\label{th:boundary_regularity_without_pUB}
		Let $s_0\in (0,1)$ and $0<\lambda\le \Lambda<\infty$. We fix a bounded open set $\Omega\subset \R^d$ that satisfies the exterior $2s_0$-$\cdini$-property uniformly. There exists a positive constant $C$ such that for any $s\in[s_0,1)$, any measure $\mu$ on the sphere satisfying \eqref{eq:non-degenerate} and $\mu(S^{d-1})\le \Lambda$, any $f\in L^\infty(\Omega)$, and any weak solution $u$ to \eqref{eq:main_equation_stable} the solution satisfies $u\in C^s(\R^d)$ and the bound \eqref{eq:Cs_regularity}. 
	\end{theorem}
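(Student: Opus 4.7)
The plan is to mirror the strategy of Theorem \ref{th:boundary_regularity}, modifying only the barrier construction so as to handle the case where $\nu_s$ is singular and $s\le 1/2$. First, I would reduce matters to the one-sided boundary decay
\[
|u(x)|\le C\,\|f\|_{L^\infty(\Omega)}\,\dist(x,\partial\Omega)^s \qquad \text{for } x\in\Omega,
\]
with $C$ depending only on $s_0$, $\lambda$, $\Lambda$, $d$, and the uniform $2s_0$-$\cdini$ modulus of $\partial\Omega$. Combined with an interior $C^s$-estimate, which holds under \eqref{eq:non-degenerate} alone with no hypothesis on the boundary, this decay yields \eqref{eq:Cs_regularity} via the same interpolation and covering argument as in the proof of Theorem \ref{th:boundary_regularity}.

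To establish the boundary decay I would set up a comparison principle against a local barrier. Near a boundary point $x_0\in\partial\Omega$ I seek $w\ge 0$ with $A_\mu^s w\ge 1$ in $\Omega\cap B_r(x_0)$, $w\equiv 0$ on $\Omega^c$, and $w(x)\le C\dist(x,\partial\Omega)^s$ in a neighborhood of $x_0$. Comparing $\pm u$ with $\|f\|_{L^\infty(\Omega)}\,w$ then yields the pointwise bound above. The natural candidate is a truncated local $s$-th power of $\dist(\cdot,\partial\Omega)$, flattened against the touching exterior $\cdini$-graph at $x_0$ guaranteed by the assumption. Splitting $A_\mu^s w(x)$ into a principal part, produced by the tangent half-space and bounded below by a positive constant through the nondegeneracy assumption \eqref{eq:non-degenerate}, and a remainder encoding the deviation of $\partial\Omega$ from that half-space in terms of the Dini modulus $\omega$, reduces matters to showing that the remainder is absorbed by the principal part uniformly in $s\in[s_0,1)$ and in $\mu$.

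The main obstacle lies in the remainder estimate. When \eqref{ass:pUB} fails and $s\le 1/2$, the singular directional behaviour of $\nu_s$ forces one to compare distances along the support of $\mu$ with the Euclidean distance, and the error picks up a Dini-type integral that would already diverge under the plain exterior $\cdini$-property. The $2s_0$-$\cdini$-property of \autoref{def:c1dini_2s_domain} is calibrated precisely so that this weighted Dini integral converges with a bound uniform in $s\in[s_0,1)$; under \eqref{ass:pUB}, the pointwise density bound $\nu_s\le(1-s)\Lambda|h|^{-d-2s}$ plays this role, which explains why Theorem \ref{th:boundary_regularity} makes do with the weaker $\cdini$-modulus. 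Once a uniformly-in-$s$ barrier has been produced—with the $(1-s)$-prefactor in the definition of $\nu_s$ absorbing the divergence of the normal and tangential integrals as $s\to 1-$—the comparison principle, a finite covering of $\partial\Omega$ using the uniformity of the $2s_0$-$\cdini$-property, and the interior estimate deliver \eqref{eq:Cs_regularity} with a constant robust as $s\to 1-$.
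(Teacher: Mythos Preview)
Your outline has a genuine gap at the barrier step. You propose to take $w$ essentially equal to a truncated $s$-th power of the distance to the exterior touching $\cdini$-domain, split $A_\mu^s w$ into a ``principal part'' coming from the tangent half-space and a ``remainder'' measuring the deviation of the boundary from that half-space, and then absorb the remainder into the principal part. But the principal part is \emph{zero}, not positive: the function $(x\cdot e)_+^s$ is $A_\mu^s$-harmonic in the half-space $\{x\cdot e>0\}$ for every admissible $\mu$ (this is the one-dimensional identity $(-\Delta)^s_{\R}(t_+^s)=0$ on $\R_+$, integrated over $\mu$). Consequently $A_\mu^s(\fd^s)$ is controlled in absolute value by the remainder term (Proposition~\ref{prop:almost_harmonic}), but it carries no sign, and there is nothing to absorb the remainder into. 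A pure $d^s$ barrier therefore cannot satisfy $A_\mu^s w\ge 1$ near $\partial\Omega$.

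What is missing is exactly the mechanism discussed in \autoref{sec:comparision_to_RO}: one must perturb $\fd^s$ by a lower-order correction that produces a definite sign. In $C^{1,\alpha}$-domains the correction $-c\,\fd^{s+\eps}$ works because $(-\Delta)^s_\R t_+^{s+\eps}\lesssim -t^{\eps-s}$ dominates the $O(d^{\alpha-s})$ remainder once $\eps<\alpha$. For a mere Dini modulus $\omega$ this fails, since $\omega(t)$ may decay slower than any power. The paper replaces $t_+^{s+\eps}$ by $t_+^s\zeta(t)$ with a carefully built $\zeta$ (Proposition~\ref{prop:subsolution_d_1}, Lemmas~\ref{lem:zeta_with_pUB}--\ref{lem:zeta_without_pUB}) so that $A_\mu^s[\fd^s\zeta(\fd)]\le -C\,d^{1-s}\zeta'(d)+\text{(remainder)}$, and then chooses $\zeta$ so that $t\zeta'(t)$ dominates both $\omega(t)$ and, in the non-\eqref{ass:pUB} case, the extra term produced by Lemma~\ref{lem:intermediate_estimate_1}. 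It is precisely this coupling \eqref{eq:zeta_omega_coupling_without_pUB} that forces the $2s_0$-$\cdini$ condition \eqref{eq:dini_2s}: the condition guarantees that such a $\zeta$ exists with $\zeta(0)=0$. Your proposal locates the role of the $2s_0$-$\cdini$ assumption only in bounding the remainder of $d^s$, but it is equally essential for the very existence of the correction term; without the latter no barrier is available and the comparison argument does not start.
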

	
	\begin{remark}\label{rem:solution_concept_main_results}
		Instead of weak solutions in \autoref{th:boundary_regularity} and \autoref{th:boundary_regularity_without_pUB} we can also treat continuous distributional solutions. The common ingredient is the access to a maximum principle to compare the solutions to a barrier function, see e.g.\ \cite{Sil07} and \cite[Lemma 2.3.5]{RoFe24}.
	\end{remark}

	\begin{remark}
		The different assumptions on the boundary of the domain $\Omega$ in \autoref{th:boundary_regularity} and \autoref{th:boundary_regularity_without_pUB} are due to \autoref{prop:almost_harmonic} and \autoref{lem:intermediate_estimate_1}. 
	\end{remark}

	\begin{remark}\label{rem:optimality}
		It is known that \autoref{th:boundary_regularity} is false for Lipschitz domains $\Omega\subset \R^d$. More precisely, whenever the set $\Omega$ has an inward corner the solution $u$ decays slower than $\distw{x}{\partial \Omega}^s$ in the corner. This can be observed by studying $s$-harmonic functions in cones and their decay behavior near the apex, see \cite{BaBo04} and \cite[Lemma 3.3]{Mic06}. 
	\end{remark}

	\begin{remark}
		It is an interesting problem for further research whether the assumptions on the domain $\Omega$ in \autoref{th:boundary_regularity} and \autoref{th:boundary_regularity_without_pUB}, i.e.\ the $\cdini$-property respectively $2s$-$\cdini$-property, are necessary. 
	\end{remark}
	
	One phenomenon is of particular interest to us. After choosing appropriate normalization constants, the operators $A_\mu^s$ converge to constant-coefficient, second order operators $A\cdot D^2$ in the limit $s\to 1-$. Starting with the article \cite{BBM01} and based on this observation, there has been considerable interest in Dirichlet problems like \eqref{eq:main_equation_stable} and the study of the asymptotics as $s\to 1-$. A feature of this work is that the boundary regularity estimates are robust as $s\to 1-$, see e.g.\ \eqref{eq:Cs_regularity} and \eqref{eq:distance_lower_bound_local}. \smallskip

	In this article, we also prove a Hopf-type boundary lemma for super-solutions $u$ to $A_\mu^s u\ge 0$. The classical Hopf boundary lemma, see \cite{Hop52}, says that a superharmonic, nontrivial function in a sufficiently regular domain has a positive inward normal derivative at the boundary, i.e.\ 
	\begin{equation*}
		\liminf\limits_{\eps \to 0+} \frac{u(z+\eps n_z)-u(z)}{\eps}>0
	\end{equation*}
	where $z$ is a point in the boundary of the domain and $n_z$ is the inward normal vector at $z$. We refer the interested reader to the exposition on classical Hopf boundary lemmata in \cite{ApNa22}. Hopf boundary lemmata are known for solutions to equations involving second order operators in domains satisfying the interior $\cdini$-property, see \cite{KaKh73}, \cite{KaHi74}, \cite{KaHi80}, and \cite{Lie85}. Within these references, it is also proven that the $\cdini$ assumption is sharp.\smallskip
	
	In the last decade, there have been several works in which Hopf-type boundary lemmata are proven for nonlocal operators. An early contribution in this regard is \cite[Proposition 2.7 ii)]{CRS10} which proves that a nonnegative, nontrivial $s$-harmonic function can be bounded from below by $C((x-z)\cdot n_z)^s$ at a boundary point $z$ where $u$ vanishes. The best known result for the class of nondegenerate $2s$-stable operators in terms of boundary regularity is $C^{1,\alpha}$, see \cite{RoFe24}. A more detailed exposition of nonlocal Hopf lemmata can be found in the \autoref{sec:literature}.
	
	\begin{proposition}[Hopf boundary lemma]\label{prop:hopf_with_pUB}
		Let $\Omega \subset \R^d$ be an open set, $s\in (0,1)$, and $0<\lambda\le \Lambda<\infty$. We assume one of the following three properties:
		\begin{itemize}
			\item[(1)] $\Omega$ satisfies the interior $\cdini$-property at a boundary point $z\in B_{1/2}\cap \partial \Omega$ and \eqref{ass:pUB}, 
			\item[(2)] $\Omega$ satisfies the interior $\cdini$-property at a boundary point $z\in B_{1/2}\cap \partial \Omega$ and $s>1/2$,
			\item[(3)] $\Omega$ satisfies the interior $2s$-$\cdini$-property at a boundary point $z\in B_{1/2}\cap \partial \Omega$.
		\end{itemize}
		There exist positive constants $C, \eps_0$ such that for any finite measure $\mu$ on the unit sphere satisfying \eqref{eq:non-degenerate} and any distributional super-solution $u\in C(B_1\cap \Omega)$ to 
		\begin{equation}\label{eq:frac_lap_sup_solution_localized}
			\begin{split}
				A_s u &\ge 0 \text{ in }\Omega\cap B_{1},\\
				u &\ge 0 \text{ on }\big(\Omega \cap B_{1}\big)^c,
			\end{split}
		\end{equation}
		the function $u$ is either zero or there exists a constant $C$ such that $u(x)\ge C\,\abs{x-z}^s$ for any $x\in B_{1/2}\cap \Omega$ inside of a non-tangential cone with apex at $z$. 
	\end{proposition}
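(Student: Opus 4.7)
The plan is a barrier argument. First, I invoke the strong minimum principle for $A_\mu^s$-super-solutions (which follows by combining the weak maximum principle from \cite[Lemma 2.3.5]{RoFe24} with the interior Harnack inequality for nondegenerate $2s$-stable operators): if $u\not\equiv 0$, then $u>0$ in $B_1\cap\Omega$, and I select a closed ball $\overline{B_\rho(x_0)}\subset B_1\cap\Omega$ and a constant $m>0$ with $u\ge m$ on $\overline{B_\rho(x_0)}$.

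Next, using the interior $\cdini$- (respectively $2s$-$\cdini$-) hypothesis at $z$, I fix an open set $D\subset B_1\cap\Omega$ with $z\in\partial D$ that itself satisfies the exterior $\cdini$- (resp.\ $2s$-$\cdini$-) property and contains both $\overline{B_\rho(x_0)}$ and the non-tangential cone with apex $z$. Let $\phi:\R^d\to\R$ denote the $A_\mu^s$-harmonic measure of $\overline{B_\rho(x_0)}$ inside $D$, i.e.\ the solution of $A_\mu^s\phi=0$ in $D\setminus\overline{B_\rho(x_0)}$ with $\phi=1$ on $\overline{B_\rho(x_0)}$ and $\phi=0$ on $D^c$. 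The weak maximum principle gives $0\le\phi\le 1$, and since $u-m\phi$ is an $A_\mu^s$-super-solution in $D\setminus\overline{B_\rho(x_0)}$ that is nonnegative on the exterior $D^c\cup\overline{B_\rho(x_0)}$, comparison yields $u\ge m\phi$ in $D\setminus\overline{B_\rho(x_0)}$. Since $d_D(x)\gtrsim|x-z|$ inside the non-tangential cone, the proof reduces to the quantitative lower bound $\phi(x)\ge c\,d_D(x)^s$ for $x$ near $z$ in $D$.

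The main obstacle is this final lower bound, which is essentially the Hopf lemma inside the model domain $D$. I plan to prove it by constructing an explicit sub-barrier with the correct decay $d_D^s$: in cases (1) and (2) the candidate is a rescaled torsion function in an inscribed ball approximating $D$ at $z$, where \eqref{ass:pUB} or the condition $s>1/2$ is used to absorb the error arising from the curvature; in case (3) the candidate is a suitable cutoff of $d_D(x)^s$ itself, with the $2s$-$\cdini$ modulus ensuring that $A_\mu^s[d_D(\cdot)^s]$ has the correct sign and is integrable — parallel to the role played by this hypothesis in \autoref{th:boundary_regularity_without_pUB}. Once the sub-barrier is established in a simpler inscribed region, a boundary Harnack argument for nondegenerate stable operators transfers the $d_D^s$-decay from the sub-barrier to $\phi$ throughout $D$, finishing the proof.
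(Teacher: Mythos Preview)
Your opening reduction---strong maximum principle to get $u>0$, then comparison with a barrier in an interior $\cdini$-domain $D$ touching $\partial\Omega$ at $z$---is exactly how the paper proceeds (see \autoref{prop:distance_lower_bound_with_pUB} and \autoref{prop:distance_lower_bound_without_pUB}). The detour through the harmonic measure $\phi$ is unnecessary: the paper compares $u$ directly with a constant multiple of an explicit sub-barrier $b_-$, but this is a cosmetic difference.

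The genuine gap is in your barrier construction, which is the entire content of the result. In cases (1) and (2) you propose the torsion function of a ball inscribed in $D$ and tangent at $z$. But the interior $\cdini$-property only hands you a $\cdini$-domain $D$, and such a domain need \emph{not} satisfy an interior ball condition at $z$: for a modulus like $\omega(t)=1/\log(1/t)^2$ the boundary near $z$ is sharper than any paraboloid, so no tangent ball fits. This is precisely why the problem is nontrivial beyond $C^{1,1}$-domains, and why the paper has to build the barrier $b_-=\fd^s+\fd^s\zeta(\fd)$ with a carefully designed perturbation $\zeta$ (satisfying \ref{ass:zeta_pic}--\ref{ass:zeta_growth}) tied to the modulus $\omega$; see \autoref{prop:subsolution_d_1}, \autoref{prop:subsolution_d}, \autoref{lem:zeta_with_pUB}, \autoref{lem:zeta_without_pUB}, and \autoref{prop:subharmonic_barrier_pUB}. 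The role of \eqref{ass:pUB} or $s>1/2$ is not to ``absorb curvature error'' from a ball comparison; it enters through \autoref{prop:almost_harmonic} and \autoref{lem:intermediate_estimate_1} to control $A_\mu^s(\fd^s)$ when the L\'evy measure is singular.

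Your suggestion for case (3)---that a cutoff of $d_D^s$ is itself a sub-solution because ``$A_\mu^s[d_D(\cdot)^s]$ has the correct sign''---is incorrect. \autoref{prop:almost_harmonic} gives only a two-sided bound on $|A_\mu^s(\fd^s)|$; it has no sign, which is why the additive correction $\fd^s\zeta(\fd)$ is needed to force $A_\mu^s b_-\le -1$ near the boundary. Finally, the closing boundary Harnack step you invoke is not available in the literature for general nondegenerate stable operators in $\cdini$-domains; establishing it would be at least as hard as the result you are trying to prove.
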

	We provide a greater variety of Hopf-type boundary estimates with a more detailed analysis of the constant $C$ in \autoref{sec:hopf}.

	\subsection{Related literature}\label{sec:literature}
	
	A typical approach to prove optimal boundary regularity entails the construction of appropriate barrier functions which describe the boundary behavior of solutions to \eqref{eq:main_equation_stable}. These functions were observed in \cite{Get61}, but we also refer to \cite{Lan72}, \cite{ChSo98}, \cite{CKS10}, \cite{Dyd12}, and \cite{BGR15}. An early approach to boundary regularity, a boundary Harnack principle, was proved in \cite{Bog97}. In \cite{RoSe14}, it was proved that given a bounded Lipschitz-domain $\Omega\subset \R^d$ satisfying the exterior ball property and an essentially bounded function $f$ the solution $u$ to 
	\begin{equation*}
		\begin{split}
			(-\Delta)^s u &= f \text{ in }\Omega,\\
			u&=0 \text{ on }\Omega^c			
		\end{split}
	\end{equation*}
	is $C^s$-Hölder-continuous on $\R^d$. Moreover, this result is optimal in terms of global Hölder regularity of the solution $u$. This result was improved in \cite{Gru14, Gru15} wherein a variety of regularity results was proved for equations involving pseudo-differential operators of fractional order. The \autoref{th:boundary_regularity} for the full class of nondegenerate stable operators in $C^{1,1}$-domains was proved in \cite{Ros16}. This was extended to $C^{1,\alpha}$-domains in the recent book \cite{RoFe24}. Fine boundary regularity properties for equations involving nonlinear nonlocal integro-differential operators were treated in \cite{RoSe16}. In \cite{RoSe17}, the authors considered $2s$-stable operators where the measure $\mu$ has a bounded density with respect to the surface measure. They treated $C^{1,\alpha}$ as well as $C^1$-domains. In the latter case, they proved solutions to be Hölder continuous for some Hölder exponent. Moreover, they treated extremal operators. In the articles \cite{ChKi02, Che18, Fal22}, the authors considered the regional fractional Laplacian and proved boundary regularity for solutions to equations involving it. In the articles \cite{CaSi11b, KKP16b, AbRo20, RoFe24, RoWe24}, the authors studied operators with kernels comparable to the fractional Laplacian, i.e.\
	\begin{equation*}
		L_ku(x)= \pv \int\limits_{\R^d} (u(x)-u(y))k(x-y)\d y
	\end{equation*}
	where $\lambda \abs{h}^{-d-2s}\le k(h)\le \Lambda\abs{h}^{-d-2s}$. In particular, they proved boundary regularity of solutions to equations involving $L_k$. In \cite{AbRo20}, the authors proved optimal boundary regularity estimates of the quotient $u/\distw{x}{\partial\Omega}^s$ in $C^{k,\alpha}$-domains. In \cite{CaSi11b, KKP16b, RoFe24}, it was proved that solutions $u$ to $L_k u = f\in L^\infty(\Omega)$ and $u=0$ on $\Omega^c$ are globally Hölder-continuous with some Hölder exponent. This was extended in \cite{RoWe24}. There, the optimal $C^s$-Hölder regularity was established. The case of non-translation invariant kernels $k(x,y)$ was treated in the article \cite{KiWe24}. In $C^{1,\alpha}$-domains and under the assumption that the kernel is Hölder continuous, see \cite[(1.3)]{KiWe24}, the authors proved optimal $C^s$-regularity, see \cite[Theorem 1.6]{KiWe24}. The Dirichlet problem for the logarithmic Laplacian was studied in \cite{ChWe19}. In particular, they proved boundary regularity by constructing appropriate barriers. This was improved in the recent article \cite{HLS25} where optimal boundary regularity was shown. Boundary Hölder regularity for solutions to the Dirichlet problem involving the fractional $p$-Laplacian was studied in \cite{IMS16}. In this context of nonlinear nonlocal operators and results on boundary regularity, we also refer to \cite{KKP16b}, \cite{FSV22}, \cite{GKS23}, and \cite{IaMo24} and the references therein. Mixed local and nonlocal operators were treated in \cite{BDV22}.
	
	Higher order expansions and regularity of $u/\distw{x}{\partial \Omega}^s$ for solutions $u$ to problems like \eqref{eq:main_equation_stable} were studied in \cite{RoSe14, Gru14, Gru15,RoSe16, Ros16, RoSe17,Fal19, RoFe24, RoWe24}. \smallskip
	
	The first Hopf lemma for equations involving nonlocal operators was proved in  the article \cite[Proposition 2.7 ii)]{CRS10}. Therein, the Hopf lemma was observed for $s$-harmonic functions in smooth domains. The first inhomogeneous Hopf lemma for the fractional Laplacian was proved in \cite[Proposition 3.3]{FaJa15}, \cite[Lemma 2.7]{IMS15}. In the former, they treated equations with zeroth-order right-hand sides and domains satisfying the interior ball condition. In the latter article, the authors studied super $s$-harmonic functions in $C^{1,1}$-domains. For domains with interior ball condition and super-solutions $u$ to $(-\Delta)^s u(x)\ge c(x)u(x)$, the Hopf lemma was proved in \cite{GrSe16}. The Hopf lemma for the fractional $p$-Laplacian was studied in \cite{DeQu17, CLQ20}. The class of nonlocal operators of the form $\psi(-\Delta)$ was considered in \cite{BiLo21}. In \cite{AnCo23}, mixed local and nonlocal operators were treated. The Hopf lemma for the class of nondegenerate $2s$-stable operators in $C^{1,\alpha}$-domains was proved in the recent book \cite{RoFe24}. In \cite[Theorem 1.1]{RoWe24} and \cite[(1.8)]{KiWe24}, the authors considered operators with kernels which are comparable to $\abs{\cdot}^{-d-2s}$ in $C^{1,\alpha}$-domains. The former includes a Hopf lemma for translation invariant operators in $C^{1,\alpha}$-domains and the latter non-translation invariant operators with Hölder continuous kernels. 
		
	\subsection{The main obstruction in comparison to previous proofs} \label{sec:comparision_to_RO}
	
	A typical proof of the $C^s$-boundary regularity combines known interior regularity estimates with a decay bound of the form $\abs{u(x)}\le Cd_\Omega(x)^s=C\distw{x}{\Omega^c}^s$ for the solution $u$ to \eqref{eq:main_equation_stable}. This bound can be proven with the help of appropriate barrier functions. 
	
	Usually the barrier is constructed as $d_\Omega^s-cd_\Omega^{s+\eps}$, see \cite[Section B.2]{RoFe24}. Let us briefly explain why this choice does not work in $\cdini$-domains. 
	
	In $C^{1,\alpha}$ domains $\Omega \subset \R^d$, the main observations are 
	\begin{align}
		\abs{(-\Delta)^s d_\Omega^s }&\lesssim d_\Omega^{\alpha-s}\tag{a}\label{eq:Ls_deltas_c1alpha} \text{ in }\Omega,\\
		(-\Delta)^s d_\Omega^{s+\eps}&\lesssim -d_\Omega^{\eps-s}<0 \text{ near }\partial \Omega \text{ in }\Omega.\tag{b}\label{eq:Ls_deltasepsilon_c1alpha}
	\end{align}
	
	The property \eqref{eq:Ls_deltas_c1alpha} is mainly due to $t_+^s$ being an $s$-harmonic function in $\R_+$. The property \eqref{eq:Ls_deltasepsilon_c1alpha} essentially uses $(-\Delta)^s_{\R}t_+^{s+\eps}\lesssim -t^{\eps-s}$ in $\R_+$. Now, choosing $\eps<\alpha$ yields an appropriate subharmonic barrier. 
	
	If $\Omega$ is merely a $\cdini$-domain with a modulus of continuity $\omega$, then one proves \begin{equation*}
		\abs{(-\Delta)^sd_\Omega^s}\lesssim d_\Omega^{-s}\omega(d_\Omega)
	\end{equation*} in place of \eqref{eq:Ls_deltas_c1alpha}. But, the estimate \eqref{eq:Ls_deltasepsilon_c1alpha} remains the same. Since $\omega(t)$ may converge much slower to zero than $t^{\eps}$ as $t\to 0+$ for any $\eps>0$, a sign of $(-\Delta)^s[d_\Omega^s-cd_\Omega^{s+\eps}]$ cannot be guaranteed. Indeed, we need a replacement for $d_\Omega^{s+\eps}$, see \autoref{prop:subsolution_d}.
	
	\subsection*{Outlook}\label{sec:outlook}
	
	In \autoref{sec:prelim}, we introduce basic notation, spaces, and, most importantly, the definition of domain regularities, see \autoref{def:c1dini_domain}, \autoref{def:c1dini_2s_domain}, and \autoref{def:ext_int_property}. Robust as $s\to 1-$ interior regularity estimates are proven in \autoref{sec:interior}. Similar to the observations on the functions $t^s_+$ and $t^{s+\eps}_+$, see \autoref{sec:comparision_to_RO}, we provide a new subharmonic function in $\R_+$ which replaces $t_+^{s+\eps}$ in \autoref{sec:one_dimensional}. In \autoref{sec:higher_dimensional}, we combine the one-dimensional observations with regularized distance functions to construct the barrier functions. In \autoref{sec:boundary_regularity_dini}, we prove \autoref{th:boundary_regularity}, \autoref{th:boundary_regularity_without_pUB}, \autoref{prop:hopf_with_pUB} and, additionally, provide a variety of related boundary estimates.
	
	\subsection*{Acknowledgements} Financial support by the German Research Foundation (GRK 2235 - 282638148) is gratefully acknowledged. The author is grateful to Thorben Hensiek, Solveig Hepp, and Moritz Kassmann for valuable comments on the manuscript.
	
	\section{Preliminaries}\label{sec:prelim}
	In this section, we introduce basic definitions and notation used throughout this work. In particular, the concept of the interior-/exterior-$\cdini$-property and $2s$-$\cdini$-domains are defined. \smallskip
	
	We use $a\wedge b = \min\{a,b\}$ and $a\vee b= \max\{a,b\}$. Moreover, $a_+$ denotes the positive part of $a$, i.e.\ $a\vee 0$. We write $A\lesssim B$ if there exists a positive constant $C$ such that $A\le CB$. We use the notation $x=(x',x_d)$ with $x'\in \R^{d-1}$ and $x_d\in \R$ for $x\in \R^d$. For an open set $\Omega\subset \R^d$, we define the distance of a point $x\in \R^d$ to $\Omega^c$ via $d_\Omega(x):= \distw{x}{\Omega^c}= \inf\{\abs{x-y}\mid y\in \Omega^c \}$. In \autoref{sec:higher_dimensional}, we need a regularized version of the distance function $d_D$ for a fixed domain $D\subset \R^d$. This is denoted by $\fd$, see \eqref{eq:reg_distance_properties}. 
	
	The space of Hölder continuous functions on $\overline{\Omega}$ for a given set $\Omega$ is written as  $C^\alpha(\overline{\Omega})$. Moreover, it is equipped with the norm $\norm{u}_{C^\alpha(\Omega)}:= \norm{u}_{C^{\lfloor \alpha \rfloor}(\Omega)}+ \max_{\abs{\beta}= \lfloor\alpha\rfloor} [\partial^\beta u ]_{C^{\alpha-\lfloor \alpha\rfloor}(\Omega)}$	where
	\begin{equation*}
		[v]_{C^{s}(\Omega)}:=\sup\limits_{x,y\in \Omega} \frac{\abs{v(x)-v(y)}}{\abs{x-y}^{s}}
	\end{equation*}
	for $s\in (0,1)$. Here, we use the notation $\norm{u}_{C^k(\Omega)}:= \sum_{\abs{\beta}\le k} \sup_{x\in \overline{\Omega}}\abs{\partial^\beta u(x)}$. \smallskip
	
	The operators $A^s_{\mu}$ induce a bilinear form for a fixed set $\Omega$ given by  
	\begin{equation*}
		\cE_{\nu_s}(u,v):= \frac{1}{2}\int\limits_{\R^d}\int\limits_{\R^d} \1_{(\Omega^c\cap \Omega^c)^c}(x,x+h) (u(x)-u(x+h))(v(x)-v(x+h))\nu_s(\d h)\d x.
	\end{equation*}
	This is a direct consequence of a nonlocal Gauß-Green-type identity. The form $\cE_{\nu_s}$ allows to study operators like $A_{\mu}^s$ with Hilbert-space methods, see e.g.\ \cite{SeVa14}, \cite{FKV15}, and \cite{Rut18}. \smallskip
	
	We say that a function $u$ is a weak solution to the equation $A^s_\mu u = f$ in $\Omega$ and $u=0$ on $\Omega^c$, i.e.\ \eqref{eq:main_equation_stable}, if $u\in L^2(\Omega)$, $\cE_{\nu_s}(u,u)<\infty$, $u=0$ on $\Omega^c$, and the equation 
	\begin{equation*}
		\cE_{\nu_s}(u,v)=\int\limits_{\Omega} f(x)v(x)\d x
	\end{equation*}
	holds for any function $v\in L^2(\Omega)$ with $v=0$ on $\Omega^c$ and $\cE_{\nu_s}(v,v)<\infty$.\smallskip
	
	We say that a function $u$ is a distributional solution to the exterior value problem \eqref{eq:main_equation_stable}, if $u\in L^1(\Omega)\cap L^1(\R^d, \nu_s^\star(x)\d x)$ and 
	\begin{equation*}
		\int\limits_{\R^d}u(x)A_\mu^s\eta(x)\d x = \int\limits_{\Omega} f(x)\eta(x)\d x 
	\end{equation*}
	for any $\eta\in C_c^\infty(\Omega)$. Here, the function $\nu_s^\star$ is a weight adapted to $s,\mu$, and $\Omega$ introduced in \cite[(12)]{GrHe22a} such that $u\, A_\mu^s\eta$ is integrable over $\R^d$. In \cite{GHS24}, the measure $\nu_s^\star$ was considered. The authors proved Sobolev regularity for solutions to Dirichlet problems involving $2s$-stable operators $A_\mu^s$ with $L^2(\Omega^c, \nu_s^\star(x)\d x)$-exterior data. \smallskip
	
	Next, we introduce the properties on domains and their boundary used throughout this work. 
	
	\begin{definition}\label{def:c1dini_domain}
		We say an open set $D\subset \R^d$ is a uniform $\cdini$-domain if 
		\begin{itemize}
			\item[1)]{ there exists a modulus of continuity $\omega:[0,\infty)\to [0,\infty)$ that is continuous, nondecreasing, sublinear, i.e.\ $\limsup_{t\to \infty}\omega(t)<\infty$, satisfies $\omega(0)=0$, and 
			\begin{equation}\label{eq:dini}
				\int\limits_{0}^1 \frac{\omega(t)}{t}\d t <\infty,
			\end{equation}}
			\item[2)]{there exists a uniform localization radius $\rho>0$ and a uniform constant $C>0$ such that for any $z\in \partial D$ there exists a rotation $R$, a translation $T_z$ that maps $z$ to the origin, and a $C^1$-map $\phi:\R^{d-1}\to \R$ satisfying $\norm{\phi}_{C^1}\le C$ and
			\begin{equation*}
				\begin{split}
					T_zR D\cap B_1&= \{ (x',x_d)\in B_1(0)\mid \phi(x')>x_d \},\\
					\abs{\nabla \phi(y')-\nabla \phi(x')}&\le \omega(\abs{y'-x'}). 
				\end{split}
			\end{equation*}
				} 
		\end{itemize}
		
	\end{definition}

	\begin{definition}\label{def:c1dini_2s_domain}
		We say an open set $D\subset \R^d$ is a uniform $2s$-$\cdini$-domain if $D$ satisfies all assumptions from \autoref{def:c1dini_domain} but \eqref{eq:dini} is replaced with 
		\begin{equation}\label{eq:dini_2s}
			\int\limits_{0}^1 \frac{\omega(t)}{t} \frac{\omega(t)^{2s-1}-\omega(1)^{2s-1}}{1-2s}\d t <\infty.
		\end{equation}
		If $s=1/2$, then this is to be understood as 
		\begin{equation*}
			\int\limits_{0}^1 \frac{\omega(t)}{t} \ln\big(\frac{\omega(1)}{\omega(t)}\big)\d t <\infty.
		\end{equation*}
	\end{definition}
	\begin{remark}\label{rem:special_dini_s_ge_12}
		If $s>1/2$, then the assumption \eqref{eq:dini_2s} is equivalent to \eqref{eq:dini}. For all $0<s<1$, the assumption \eqref{eq:dini_2s} implies \eqref{eq:dini}. 
	\end{remark}
	\begin{lemma}\label{lem:special_2s_0_dini_bigger_s}
		If the function $\omega$ satisfies \eqref{eq:dini_2s} for some $s_0$, then it also satisfies $\eqref{eq:dini_2s}$ for any $s\ge s_0$.
	\end{lemma}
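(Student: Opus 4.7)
The plan is to unify the two cases of \eqref{eq:dini_2s} by observing that, for every $s\in(0,1)$,
\begin{equation*}
	F_s(t):=\int\limits_{\omega(t)}^{\omega(1)} u^{2s-2}\,\d u
	=
	\begin{cases}
		\dfrac{\omega(t)^{2s-1}-\omega(1)^{2s-1}}{1-2s}, & s\neq 1/2,\\[1ex]
		\ln\!\bigl(\omega(1)/\omega(t)\bigr), & s=1/2.
	\end{cases}
\end{equation*}
In particular the integrand $\frac{\omega(t)}{t}\,F_s(t)$ is precisely the one appearing in \eqref{eq:dini_2s} uniformly in $s$, and $F_s(t)\ge 0$ because $\omega$ is nondecreasing and $t\le 1$.

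Next I would establish a pointwise comparison of $F_s$ and $F_{s_0}$ for $s\ge s_0$. Writing $u^{2s-2}=u^{2s_0-2}\,u^{2(s-s_0)}$ and noting that $u$ ranges over $(0,\omega(1)]$ with $2(s-s_0)\in[0,2]$, one checks case-by-case (whether $u\le 1$ or $u>1$, and whether $\omega(1)\le 1$ or $>1$) that
\begin{equation*}
	u^{2(s-s_0)}\le \max\!\bigl(1,\omega(1)^{2(s-s_0)}\bigr)\le 1+\omega(1)^2=:C_\omega.
\end{equation*}
Plugging this into the integral representation gives $F_s(t)\le C_\omega\,F_{s_0}(t)$ for every $t\in(0,1]$.

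The conclusion is then immediate: multiply by $\omega(t)/t\ge 0$ and integrate,
\begin{equation*}
	\int\limits_0^1 \frac{\omega(t)}{t}\,F_s(t)\,\d t
	\le C_\omega \int\limits_0^1 \frac{\omega(t)}{t}\,F_{s_0}(t)\,\d t <\infty,
\end{equation*}
where the finiteness of the right hand side is the hypothesis that $\omega$ satisfies \eqref{eq:dini_2s} at $s_0$.

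The only mildly delicate point is to make sure the integral representation is used uniformly in $s$, including at $s=1/2$; this is direct since both $F_s$ and the quotient in \eqref{eq:dini_2s} are antiderivatives of $u^{2s-2}$ evaluated between $\omega(t)$ and $\omega(1)$, and the logarithmic expression is obtained either by continuity in $s$ or by explicit integration of $u^{-1}$. Everything else is a routine bound exploiting $s-s_0\in[0,1]$ and the monotonicity of $\omega$.
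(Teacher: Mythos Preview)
Your proof is correct and takes a genuinely different route from the paper's. The paper disposes of the case $s_0\ge 1/2$ by invoking \autoref{rem:special_dini_s_ge_12}, and for $0<s_0<s\le 1/2$ it shows that the ratio of the two integrands tends to infinity as $t\to 0+$ (using $\omega(t)^{2(s_0-s)}\to\infty$), so the $s_0$-integrand eventually dominates the $s$-integrand near the origin; the case $s=1/2$ is left to the reader as analogous. By contrast, your integral representation $F_s(t)=\int_{\omega(t)}^{\omega(1)} u^{2s-2}\,\d u$ unifies all values of $s$, including $s=1/2$, and the factorisation $u^{2s-2}=u^{2s_0-2}\,u^{2(s-s_0)}$ yields a pointwise bound $F_s(t)\le C_\omega F_{s_0}(t)$ with an explicit constant depending only on $\omega(1)$. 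This avoids case distinctions and limit arguments, and gives a uniform (in $s$) comparison rather than an asymptotic one. The paper's approach is perhaps more elementary in that it does not require spotting the antiderivative structure, but yours is cleaner and immediately robust in $s$.
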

	\begin{proof}
		For $s_0\ge 1/2$ the statement is trivial. It remains to prove the statement for $0<s_0<s\le 1/2$. In the case $s<1/2$, notice that 
		\begin{align*}
			\lim\limits_{t\to 0+}\frac{\omega(t)^{2s_0-1}-\omega(1)^{2s_0-1}}{1-2s_0} \frac{1-2s}{\omega(t)^{2s-1}-\omega(1)^{2s-1}}= \lim\limits_{t\to 0+} \frac{1-2s}{1-2s_0}\frac{\omega(t)^{2s_0-1}}{\omega(t)^{2s-1}}= \infty
		\end{align*}
		since $\omega(0)=0$. Thus, there exists $t_1>0$ such that 
		\begin{equation*}
			\frac{\omega(t)^{2s_0-1}-\omega(1)^{2s_0-1}}{1-2s_0} \ge \frac{\omega(t)^{2s-1}-\omega(1)^{2s-1}}{1-2s}
		\end{equation*}
		for all $0<t<t_1$. For $s=1/2$ we argue similarly.
	\end{proof}
	
	\begin{definition}\label{def:ext_int_property}
		We say an open set $\Omega\subset \R^d$ satisfies the
		\begin{itemize}
			\item[i)] exterior $\cdini$-property at a boundary point $z\in \partial \Omega$ if there exists a uniform $\cdini$-domain $D\supset \Omega$ that touches $\Omega$ only in $z$,
			\item[ii)] interior $\cdini$-property at a boundary point $z\in \partial \Omega$ if there exists a uniform $\cdini$-domain $D\subset \Omega$ that touches $\Omega$ only in $z$.
			\item[iii)] The respective interior / exterior $2s$-$\cdini$-property are equivalently defined using \autoref{def:c1dini_2s_domain}.
			\item[iv)] An open set satisfies the interior / exterior $\cdini$/$2s$-$\cdini$-property uniformly if there exists a uniform modulus of continuity, localization radius, and diameter of the $\cdini$/$2s$-$\cdini$-domains for the interior / exterior $\cdini$-property at all boundary points. 
		\end{itemize}
	\end{definition}
	The definitions $i)$ and $ii)$ from \autoref{def:ext_int_property} are just as in \cite[p.346]{Lie85}.

	\section{Robust interior regularity}\label{sec:interior}
	
	In this section, we prove interior regularity results which, together with the boundary estimates in \autoref{sec:boundary_regularity_dini}, prove the main boundary regularity estimate, see \eqref{eq:Cs_regularity}.\smallskip

	The proofs in this section are robust in $s$ adaptations of \cite[Lemma 3.1, Proposition 3.3, Theorem 1.1 (a)]{Ros16}. Robust estimates like these for a different class of nonlinear problems were proved in \cite[Lemma 7.5]{RoSe16}.\smallskip
	
	The proof of the interior regularity is based on a compactness result and a Liouville-type theorem. For this, we need the following lemma to extract a weakly convergent subsequence from a blow up sequence constructed in \autoref{lem:interior_main_ingredient}.
	
	\begin{lemma}[Subsequence extraction]\label{lem:blow_up}
		Let $s_0\in (0,1)$, $0<\lambda\le \Lambda<\infty$ be fixed. Let $\{s_k\}$ be a sequence of real numbers in $[s_0,1)$ and $\{\mu_k\}$ be a sequence of finite measures on the unit sphere $S^{d-1}$ satisfying $\mu(S^{d-1})\le \Lambda$ and \eqref{eq:non-degenerate}. There exists a subsequence $\{k_n\}$ such that the corresponding stable operators $\{A_{\mu_k}^{s_k}\}$ converge distributionally either to an operator $A_{\mu}^s$ of type \eqref{eq:stable_op}, where $\mu$ satisfies $\mu(S^{d-1})\le \Lambda$ and the nondegeneracy assumption \eqref{eq:non-degenerate}, or to an elliptic operator of second order with constant coefficient $-A\cdot D^2 $ and ellipticity constants $\lambda,\Lambda$.\smallskip
		
		Moreover, let $\Omega\subset \R^d$ be a bounded domain, $\{u_k:\R^d\to \R\}, \{f_k:\Omega\to \R\}$ be sequences of functions satisfying $A_{\mu_k}^{s_k}u_k=f_k$ in the distributional sense in $\Omega$. Assume further that these functions satisfy 
		\begin{itemize}
			\item[i)] $u_k\to u$ locally uniformly in $\R^d$, 
			\item[ii)] $f_k\to f$ uniformly in $\Omega$,
			\item[iii)] there exist $C$, $\eps$ independent of $k$ such that $u_k$ satisfies the uniform bound \begin{equation*}
				\abs{u_k(x)}\le C(1+\abs{x})^{2s_k-\eps}.
			\end{equation*}
		\end{itemize}
		Then, the limit $u$ satisfies either $A_{\mu}^su = f$ or $A\cdot D^2 u = f$ in $\Omega$ in the distributional sense. 
	\end{lemma}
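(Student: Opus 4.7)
First Bolzano--Weierstrass gives a subsequence along which $s_k\to s_\infty\in[s_0,1]$. Since $\mu_k(S^{d-1})\le\Lambda$ uniformly on the compact metric space $S^{d-1}$, Banach--Alaoglu (equivalently Helly's selection theorem) extracts a further subsequence along which $\mu_k\rightharpoonup\mu_\infty$ weakly as finite Borel measures, with $\mu_\infty(S^{d-1})\le\Lambda$. To pass \eqref{eq:non-degenerate} to the limit I would fix $w\in S^{d-1}$ and observe that $\theta\mapsto|w\cdot\theta|^{2s_k}$ converges uniformly on $S^{d-1}$ to $\theta\mapsto|w\cdot\theta|^{2s_\infty}$ (using $s_\infty\ge s_0>0$), so weak convergence of $\mu_k$ gives $\int|w\cdot\theta|^{2s_\infty}\,d\mu_\infty(\theta)\ge\lambda$. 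For $s_\infty<1$ this is exactly \eqref{eq:non-degenerate} for $\mu_\infty$; for $s_\infty=1$ the integrand is $(w\cdot\theta)^2$, and the symmetric matrix $A_{ij}:=c\int_{S^{d-1}}\theta_i\theta_j\,d\mu_\infty(\theta)$ (with $c$ the normalization constant arising from the $(1-s)$-factor in $\nu_s$) inherits the ellipticity bounds with constants $\lambda,\Lambda$ up to universal factors.

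\textbf{Step 2 (pointwise convergence on test functions).} For $\eta\in C_c^\infty(\R^d)$ and fixed $x\in\R^d$ I would use symmetry to write
\[
A^{s_k}_{\mu_k}\eta(x)=\int_{S^{d-1}}g_k(\theta)\,d\mu_k(\theta),
\]
where $g_k(\theta)$ is the one-dimensional symmetric-difference integral of $\eta$ against the renormalized $2s_k$-stable radial kernel in direction $\theta$. Since the bracket $2\eta(x)-\eta(x+r\theta)-\eta(x-r\theta)$ is $O(\min(r^2,1))$ uniformly in $\theta$, dominated convergence yields $g_k\to g_\infty$ uniformly on $S^{d-1}$: for $s_\infty<1$ the limit is the analogous integral at $s_\infty$, and for $s_\infty=1$ a Taylor expansion together with the identity $\lim_{s\to 1}(1-s)\int_0^\delta r^{1-2s}\,dr=\tfrac12$ identifies $g_\infty(\theta)=-\tfrac12 D^2\eta(x)[\theta,\theta]$. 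Continuity of $g_\infty$ on $S^{d-1}$ combined with the uniform convergence $g_k\to g_\infty$ and weak convergence $\mu_k\rightharpoonup\mu_\infty$ then gives pointwise convergence of $A^{s_k}_{\mu_k}\eta(x)$ to $A^{s_\infty}_{\mu_\infty}\eta(x)$, respectively $-A\cdot D^2\eta(x)$, for every $x\in\R^d$.

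\textbf{Step 3 (passage to the limit in the equation --- the main obstacle).} It remains to take $k\to\infty$ in the identity $\int_{\R^d}u_k A^{s_k}_{\mu_k}\eta\,dx=\int_\Omega f_k\eta\,dx$. The right-hand side converges trivially by the uniform convergence of $f_k$ on $\Omega$. For the left-hand side I would decompose $\R^d=B_R\cup B_R^c$ with $B_R\supset\supp\eta$. Inside $B_R$, a uniform $L^\infty$-bound on $A^{s_k}_{\mu_k}\eta$ (obtained by splitting $|h|<1$ and $|h|\ge 1$ in the defining integral; both pieces give constants controlled by $\Lambda$, $s_0$, and derivatives of $\eta$) combined with the local uniform convergence $u_k\to u$ and the pointwise limit from Step~2 yields convergence there by bounded convergence. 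Outside $B_R$ the singularity at $h=0$ plays no role, and $A^{s_k}_{\mu_k}\eta(x)=-\int_{\R^d}\eta(x+h)\,\nu_{s_k}(dh)$; I would control this uniformly in $k$ by an integrable envelope matching the polynomial growth $|u_k(x)|\le C(1+|x|)^{2s_k-\eps}$, using the polar decomposition of $\nu_{s_k}$ and the tail weight $\nu_s^\star$ of \cite{GrHe22a}, which is designed precisely so that the product $u\,A_\mu^s\eta$ is integrable. Producing this uniform-in-$k$ tail estimate --- in particular in the regime $s_k\to 1^-$, where the radial factor $(1-s_k)|r|^{-1-2s_k}\,dr$ degenerates towards a concentration at $r=0$ --- is where I expect the main technical effort to lie.
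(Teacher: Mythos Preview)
Your proposal is correct and follows essentially the same approach as the paper: extract convergent subsequences of $\{s_k\}$ and $\{\mu_k\}$ by compactness, identify the limit operator on test functions via a near/far decomposition and Taylor expansion (with the $(1-s_k)$ factor killing the far part and producing the Hessian when $s_\infty=1$), and then pass to the limit in the distributional identity using dominated convergence together with the tail weight $\nu_s^\star$ from \cite{GrHe22a}. The paper's proof is terser---it simply invokes \cite[(26), (12)]{GrHe22a} for the uniform tail control you flag as the main obstacle---but the substance is the same; your explicit verification that \eqref{eq:non-degenerate} passes to the limit is a detail the paper omits.
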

	The proof is a minor adaption of \cite[Lemma 3.1]{Ros16}. Here, it is essential to know that operators like $A_{\mu}^s$ converge to second order operators as $s\to 1-$. This is discussed e.g.\ in \cite{BBM01}, \cite{FKV20}. 
	\begin{proof}
		Every time we switch to a subsequence, we simply keep referring to it as the original sequence to ease notation. \smallskip
		
		Since $\{s_k\}$ is bounded, we find a convergent subsequence with limit $s\in [s_0,1]$. We will need to distinguish the cases $s<1$ and $s=1$. Since the space of probability measures on the sphere is weakly compact, we find a subsequence of $\{\mu_k\}$ which converges weakly to a measure $\mu$ satisfying $\mu(S^{d-1})\le \Lambda$ and \eqref{eq:non-degenerate}. We define the matrix $A:= (a_{ij})$ via 
		\begin{align*}
			a_{ij}:= \frac{1}{2}\int\limits_{S^{d-1}} \theta_i\theta_j \mu(\d \theta).
		\end{align*}
		If $s<1$, then for any test function $\eta\in C_c^\infty$ the convergence $A_{\mu_k}^{s_k}\eta(x)\to A_{\mu}^s \eta(x)$ holds due to the weak convergence of the measures $\{\mu_k\}$. If $s=1$, then we decompose $	A_{\mu_k}^{s_k}\eta(x)$ as $\I{}+\II{}$, where
		\begin{align*}
			\I{}:= (1-s_k)\int\limits_{(-1,1)} \int\limits_{S^{d-1}}\frac{\eta(x)-\eta(x+r\theta)}{\abs{r}^{1+2s_k}}\mu_k(\d \theta)\d r.
		\end{align*}
		Thus, the term $\II{}$ vanishes in the limit $k\to \infty$ since
		\begin{align*}
			\abs{\II{}}\le (1-s_k) 2 \norm{\eta}_{L^\infty}\mu(S^{d-1}) 2 \frac{1}{2s_k}.
		\end{align*}
		For the term $\I{}$, we use a Taylor expansion of the test function $\eta$. Due to symmetry, the first order term vanishes. We find 
		\begin{align*}
			\I{}= -(1-s_k)\frac{1}{2}\int\limits_{(-1,1)} \int\limits_{S^{d-1}}\frac{r\theta \cdot D^2\eta(x)\cdot r\theta+R(r)}{\abs{r}^{1+2s_k}}\mu_k(\d \theta)\d r.
		\end{align*}
		The term with the remainder $R(r)$ vanishes with the same arguments as for $\II{}$. Thus, the term $\I{}$ converges to 
		\begin{align*}
			-\frac{1}{2}\int\limits_{S^{d-1}}\theta \cdot D^2\eta(x)\cdot \theta\mu(\d \theta)
		\end{align*}
		as $k\to \infty$ due to the weak convergence of $\{\mu_k\}$ which is the desired result. \smallskip
		
		From now on we simply denote the limit operator of $A_{\mu_k}^{s_k}$ by $L$ instead of switching between $A_{\mu}^s$ and $-A\cdot D^2$. Due to the regularity of $\eta$, the above arguments, and using dominated convergence, the sequence $A_{\mu_k}^{s_k}\eta$ converges locally uniformly to $L$. Finally, combining \cite[(26), (12)]{GrHe22a} with the growth bound iii) and dominated convergence leads to 
		\begin{align*}
			\int\limits_{\R^d} u_k(x) A_{\mu_k}^{s_k}(x)\d x\to \int\limits_{\R^d} u L \eta(x)\d x.
		\end{align*}
		Due to the uniform convergence of $\{f_k\}$ and $A_{\mu_k}^{s_k}u_k=f_k$ distributionally in $\Omega$, the result follows.
	\end{proof}

	The following proposition is an adaptation of \cite[Proposition 3.3]{Ros16}. Here, the dependencies of the constant in the estimate \eqref{eq:interior_main_ingredient} are crucial. It is the main ingredient to prove \autoref{th:interior_s_regularity}.
	
	\begin{lemma}\label{lem:interior_main_ingredient}
		Let $s_0\in (0,1)$, $0<\lambda\le\Lambda<\infty$ be fixed. There exists a positive constant $C= C(s_0,\lambda,\Lambda)$ such that for all $s\in[s_0,1)$, operators $A_{\mu}^s$ as in \eqref{eq:stable_op} satisfying \eqref{eq:non-degenerate}, $f\in L^\infty(B_1)$, $u\in C_c^\infty(\R^d)$ solving $A_{\mu}^s u = f $ in $B_1$ in the pointwise sense the bound
		\begin{equation}\label{eq:interior_main_ingredient}
			[u]_{C^{\alpha_i(s)}(B_{1/2})}\le C\big( \norm{f}_{L^\infty(B_1)}+ \norm{u}_{C^{\beta_i(s,s_0)}(\R^d)} \big)
		\end{equation}
		holds for $i=1$ or $i=2$. Here, $\alpha_1(3/4)= 3/4$, $\alpha_2(3/4)=5/4$, $\beta_1(s_0,3/4):=s_0/2$, $\beta_2(s_0,3/4):=9/8$, and for both $i\in\{1,2\}$ and $s\in (s_0,1)\setminus\{3/4\}$
		\begin{align*}
			\alpha_i=\alpha_i(s):= \begin{cases}
				s &\text{ if } s\in[s_0,3/4)\\
				5/3\,s &\text{ if }s\in (3/4,1)
			\end{cases},\quad	\beta_i=\beta_i(s)=\beta_i(s,s_0):= \begin{cases}
				s_0/2 & \text{ if } s\in [s_0,3/4)\\
				9/8 & \text{ if }s\in (3/4,1)
			\end{cases}.
		\end{align*}
	\end{lemma}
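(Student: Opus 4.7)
The plan is to argue by contradiction via a compactness/blow-up scheme adapting \cite[Proposition~3.3]{Ros16}, with every constant tracked in $s$ so that the final estimate depends only on $s_0,\lambda,\Lambda$. Suppose the claim fails for some fixed $i\in\{1,2\}$. Then we obtain sequences $s_k\in[s_0,1)$, measures $\mu_k$ with $\mu_k(S^{d-1})\le\Lambda$ satisfying \eqref{eq:non-degenerate}, and $u_k\in C_c^\infty(\R^d),f_k\in L^\infty(B_1)$ with $A_{\mu_k}^{s_k}u_k=f_k$ in $B_1$ such that, after normalizing $\|f_k\|_{L^\infty(B_1)}+\|u_k\|_{C^{\beta_i(s_k,s_0)}(\R^d)}=1$, one has $[u_k]_{C^{\alpha_i(s_k)}(B_{1/2})}\to\infty$. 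Write $\alpha_k:=\alpha_i(s_k)$, $\beta_k:=\beta_i(s_k,s_0)$.

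Introduce the monotone blow-up quantity
\begin{equation*}
\theta(r):=\sup_{k}\,\sup_{x_0\in B_{1/2}}\,r^{-\alpha_k}\inf_{p\in\mathcal{P}_k}\|u_k-p\|_{L^\infty(B_r(x_0))},
\end{equation*}
where $\mathcal{P}_k$ is the space of polynomials of degree $\le\lfloor\alpha_k\rfloor$ (constants if $s_k<3/4$, affine if $s_k>3/4$). Since the $C^{\alpha_k}$-seminorms explode and $\beta_k<\alpha_k$ uniformly, $\theta(r)\to\infty$ as $r\downarrow 0$. Pick almost-maximizing $(k_j,x_j,r_j)$ and associated best polynomials $p_j$, and define
\begin{equation*}
v_j(x):=\frac{u_{k_j}(x_j+r_jx)-p_j(x_j+r_jx)}{r_j^{\alpha_{k_j}}\theta(r_j)}.
\end{equation*}
A standard dyadic iteration gives the growth bound $|v_j(x)|\le C(1+|x|)^{\alpha_{k_j}}$ on $\R^d$, using the $C^{\beta_k}$-norm of $u_{k_j}$ to control the far regime $|x|\gtrsim 1/r_j$; the near-maximality guarantees $\|v_j\|_{L^\infty(B_1)}\gtrsim 1$ (after subtracting the best polynomial in $B_1$). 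Because $A_{\mu_k}^{s_k}$ annihilates polynomials of degree $\le 1$ by symmetry, $v_j$ solves
\begin{equation*}
A_{\mu_{k_j}}^{s_{k_j}}v_j(x)=\tilde f_j(x):=\frac{r_j^{2s_{k_j}-\alpha_{k_j}}}{\theta(r_j)}\,f_{k_j}(x_j+r_jx)\qquad\text{in }B_{1/(2r_j)}.
\end{equation*}
The strict gap $2s_k-\alpha_k>0$ (uniform: $=s_k\ge s_0$ for $s_k<3/4$, $=s_k/3\ge s_0/3$ for $s_k>3/4$) together with $\theta(r_j)\to\infty$ forces $\tilde f_j\to 0$ uniformly on compact sets.

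By construction $\{v_j\}$ is equi-Hölder of exponent $\alpha_{k_j}$ on compact sets and has uniform polynomial growth. Arzelà--Ascoli yields a subsequence converging locally uniformly to some continuous $v$ of the same growth; after further extraction $s_{k_j}\to s\in[s_0,1]$ and $\mu_{k_j}\rightharpoonup\mu$. The growth bound $\alpha_{k_j}\le 2s_{k_j}-\varepsilon$ for $\varepsilon$ depending only on $s_0$ verifies hypothesis iii) of \autoref{lem:blow_up}, which applies and shows that $v$ solves $Lv=0$ distributionally on all of $\R^d$, where $L$ is either a nondegenerate stable operator of order $2s$ with $s\in[s_0,1)$ or a constant-coefficient second order uniformly elliptic operator. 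A Liouville-type theorem for $L$ (for stable operators with Fourier symbol comparable to $|\xi|^{2s}$, functions of growth $|v(x)|\le C(1+|x|)^{\alpha}$ with $\alpha<2s$ must be polynomials of degree $\le\lfloor\alpha\rfloor$; for second order operators this is classical) forces $v$ to be a polynomial of degree $\le\lfloor\alpha_i(s)\rfloor$. By construction $v$ has zero best-polynomial approximation of that degree on $B_1$, hence $v\equiv 0$, contradicting $\|v\|_{L^\infty(B_1)}\gtrsim 1$.

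The main obstacle is bookkeeping the blow-up uniformly in $s$: one must simultaneously verify that $2s_k-\alpha_k$ and $\alpha_k-\beta_k$ stay away from zero (otherwise the rescaled forcing does not vanish and the far-field growth cannot be controlled), that the degree of the subtracted polynomial $p_j$ is the correct one at the threshold $s=3/4$ where $\alpha_k$ crosses $1$, and that the compactness provided by \autoref{lem:blow_up} continues to apply in the limit $s\to 1-$ yielding a second order limit equation. The endpoint case $s_k=3/4$ is inserted by hand with the specified $\alpha_i(3/4),\beta_i(3/4,s_0)$: for this single operator the iterative bound feeds back into the contradiction argument without difficulty.
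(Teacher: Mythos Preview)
Your strategy is the same as the paper's---contradiction, blow-up, compactness via \autoref{lem:blow_up}, and Liouville---with two cosmetic variations: you use the $L^\infty$ best-polynomial quantity $\theta(r)$ where the paper uses the H\"older-seminorm quantity $\kappa(r)=\sup_k\sup_z\sup_{r'>r}(r')^{\beta_k-\alpha_k}[u_k]_{C^{\beta_k}(B_{r'}(z))}$, and you apply Liouville directly to $v$ (growth $\alpha<2s$) rather than to increments $w_h=v(\cdot+h)-v$ (growth $\alpha-\beta<1$). Both variants are legitimate, and your negation (fixing $i$) would in fact prove the stronger statement that the bound holds for each $i$ separately, which is fine.

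There is, however, a genuine gap at the compactness step. Your sentence ``by construction $\{v_j\}$ is equi-H\"older of exponent $\alpha_{k_j}$ on compact sets'' is precisely what you are trying to prove and is not a consequence of the $\theta(r)$ construction: $\theta$ controls only $L^\infty$ oscillation after polynomial subtraction, not a H\"older seminorm. Trying to use the hypothesis $\|u_k\|_{C^{\beta_k}}\le 1$ directly gives $[v_j]_{C^{\beta_k}(\R^d)}\le r_j^{\beta_k-\alpha_k}/\theta(r_j)$, and since $r_j^{\beta_k-\alpha_k}\to\infty$ this need not be bounded. The paper avoids this by building the blow-up quantity $\kappa$ \emph{from} the $C^{\beta_i}$-seminorm, which immediately yields $[v_n]_{C^{\beta_i}(B_R)}\le R^{\alpha_i-\beta_i}$ (their inequality (3.6)) and hence Arzel\`a--Ascoli. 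To repair your argument you must either switch to this H\"older-based monotone quantity, or invoke an independent a priori interior $C^\gamma$ estimate (some small $\gamma>0$, uniform in $s\in[s_0,1)$) for solutions of $A^{s_k}_{\mu_k}v_j=\tilde f_j$ with bounded right-hand side and controlled tails, and use that to furnish equicontinuity.
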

	
	\begin{proof}
		We prove the result by contradiction, very close to \cite[Proposition 3.3]{Ros16}, and adapt whenever necessary. Assume that the statement is false. Thus, there exists $s_0\in (0,1)$, $0<\lambda\le \Lambda<\infty$ and a sequence of real numbers $\{s_k\}\subset [s_0,1)$, a sequence of measures $\{\mu_{k}\}$ on the unit sphere satisfying \eqref{eq:non-degenerate} and $\mu(S^{d-1})\le \Lambda$, a sequence of functions $f_k\in L^\infty(B_1)$, $u\in C_c^\infty(\R^d)$ solving $A_{\mu_k}^{s_k}u_k=f_k$ in $B_1$ and 
		\begin{align}
			\norm{f_k}_{L^\infty(B_1)}+ \norm{u_k}_{C^{\beta_i(s_k)}(\R^d)}&\le 1, \label{eq:interior_main_ingredient_contra1}\\
			\norm{u_k}_{C^{\alpha_i(s_k)}(B_{1/2})}&\ge k.\label{eq:interior_main_ingredient_contra2}
		\end{align}
		for both $i=1$ and $i=2$.
	
		\textit{Claim a.} The expression 
		\begin{equation*}
			\sup\limits_{k\in \N} \sup\limits_{z\in B_{1/2}} \sup\limits_{r>0} r^{-\alpha_i(s_k)+\beta_i(s_k)} [u_k]_{C^{\beta_i(s_k)}(B_r(z))}
		\end{equation*}
		is infinite for both $i=1$ and $i=2$.\smallskip
		
		Assume the contrary, then there exists a positive constant $c_1$ such that for any $k, z, r, \abs{h}<r$ we know 
		\begin{equation*}
			r^{-\alpha_i(s_k)+\beta_i(s_k)}\frac{\abs{u_k(z+h)-u_k(z)}}{\abs{h}^{\beta_i(s_k)}}\le c_1.
		\end{equation*}
		Picking $\abs{h}=r/2$ is a contradiction to \eqref{eq:interior_main_ingredient_contra2}. This proves claim a.\smallskip
		
		Now, we choose $i$. Since the sequence $\{s_k\}$ is bounded, we find a convergent subsequence with limit $s\in[s_0,1]$. If $s\ge 3/4$, then we may pick another subsequence such that $s_k\ge 3/4$ for all $k$. If $s<3/4$, then we pick another subsequence such that all $s_k$ are strictly smaller than $3/4$. In both cases, we stick to the notation of the original sequence when referring to the aforementioned subsequence. Now, if $s\ge 3/4$, then we fix $i=2$. Else we choose $i=1$. \medskip
		
		We define the function $\kappa(r):[0,\infty)\to [0,\infty)$ via
		\begin{align*}
			\kappa(r):= \sup\limits_{k\in \N} \sup\limits_{z\in B_{1/2}} \sup\limits_{\tilde{r}>r} \tilde{r}^{-\alpha_i(s_k)+\beta_i(s_k)}  [u_k]_{C^{\beta_i(s_k)}(B_{\tilde{r}}(z))}.
		\end{align*}
		Notice that $\kappa$ is nonincreasing and $\kappa(r)$ is finite for any $r>0$ due to \eqref{eq:interior_main_ingredient_contra1}.\smallskip
		
		By the definition of $\kappa$, for any $n\in \N$ the term $\kappa(1/n)$ yields $\tilde{r}_n\ge 1/n$, $k_n\in \N$, and $z_n\in B_{1/2}$ such that 
		\begin{equation}\label{eq:interior_main_ingredient_new_sequence}
			\tilde{r}_n^{-\alpha_i(s_{k_n})+\beta_i(s_{k_n})}[u_{k_n}]_{C^{\beta_i(s_{k_n})}(B_{\tilde{r}_n}(z_n))}\ge \frac{1}{2}\kappa(1/n)\ge \frac{\kappa(\tilde{r}_n)}{2}.
		\end{equation}
		Now, we define a sequence of polynomials $\rho_n$ of order $i$ that are  minimizing 
		\begin{equation*}
			\int\limits_{B_{\tilde{r}_n}(z_n)} \abs{u_{k_n}(x)-\rho_n(x-z_n)}^2\d x = \min\limits_{\substack{q \text{ polynomial}\\	\text{of order } i-1}}\int\limits_{B_{\tilde{r}_n}(z_n)} \abs{u_{k_n}(x)-q(x-z_n)}^2\d x,
		\end{equation*} 
		i.e.\ the projection of $L^2$ to the subspace of $(i-1)$-th order polynomials. If $s< 3/4$, then $\rho_n= \fint_{B_{r_n}(z_n)} u_{k_n}(x)\d x$. If $s\ge 3/4$, then \begin{equation*}
			\rho_n(x-z_n)= \fint_{B_{r_n}(z_n)} u_{k_n}(y)\d y+\sum_{j=1}^{d}\frac{d+2}{r_n^2}\fint_{B_{r_n}(z_n)} u_{k_n}(y)(y_j-z_j)\d y \, (x_j-z_j).
		\end{equation*}
		
		Furthermore, we define the sequence of functions $\{v_n\}$ via
		\begin{equation*}
			v_n(x):= \frac{u_{k_n}(z_n-\tilde{r}_n x) -\rho_n(x-z_n) }{\tilde{r}_n^{\alpha_i(s_{k_n})} \kappa(\tilde{r}_n)}. 
		\end{equation*}
		This sequence $\{v_n\}$ will be the blow up sequence to which we apply \autoref{lem:blow_up}. By definition of the polynomial $\rho_n$, the functions $v_n$ have zero mean over $B_{1}(0)$. Additionally, $v_n$ is orthogonal to all polynomials of order $1$. Moreover, by definition of $v_n$ and $\kappa(\tilde{r}_n)$ we find 
		\begin{equation}\label{eq:interior_main_ingredient_v_n_lower_hoelder}
			[v_n]_{C^{\beta_i(s_{k_n})}(B_1)}\ge \frac{\tilde{r}_n^{\beta_i(s_{k_n})} [u_{k_n}]_{C^{\beta_i(s_{k_n})}(B_{\tilde{r}_n}(z_n))} }{\tilde{r}_n^{\alpha_i(s_{k_n})} \kappa(\tilde{r}_n)}\ge \frac{1}{2}.
		\end{equation}
		Next, we derive an upper bound on the Hölder norm of $v_n$. Using the definition of $\kappa$ we derive for any $R\ge 1$
		\begin{equation}
			\begin{split}\label{eq:interior_main_ingredient_v_n_upper_hoelder}
				[v_n]_{C^{\beta_i(s_{k_n})}(B_R)}&= R^{\alpha_i(s_{k_n})-\beta_i(s_{k_n})} \frac{[u_{k_n}]_{C^{\beta_i(s_{k_n})}(B_{Rr_n}(z_n)) }}{\kappa(r_n) (Rr_n)^{\alpha_i(s_{k_n})-\beta_i(s_{k_n})}}\\
				&\le \frac{\kappa(Rr_n)}{\kappa(r_n)} R^{\alpha_i(s_{k_n})-\beta_i(s_{k_n})}\le R^{\alpha_i(s_{k_n})-\beta_i(s_{k_n})}.
			\end{split}
		\end{equation}
		Here, we used that $\kappa$ is nondecreasing. \smallskip
		
		\textit{Claim b.1.} If $i=1$, then the functions $\{v_n\}$ are uniformly bounded on $B_1$, i.e.\ 
		\begin{equation*}
			\norm{v_n}_{C(B_1)}\le 2^{s_0/2}.
		\end{equation*} 
		
		Since the function $v_n$ is continuous and has zero mean on $B_1$ there exists $y_0\in B_1(0)$ such that $v_n(y_0)=0$. Due to the previous upper bound on the $C^{s_0/2}$-seminorm of $v_n$, see \eqref{eq:interior_main_ingredient_v_n_upper_hoelder}, we find for any $x\in B_1$
		\begin{align*}
			\abs{v_n(x)}\le 2^{s_0/2} \frac{\abs{v_n(x)-v_n(y_0)}}{\abs{x-y_0}^{s_0/2}}\le 2^{s_0/2}.
		\end{align*} 
		This proves claim b.1.\medskip
		
		\textit{Claim b.2.} If $i=2$, then the functions $\{v_n\}$ are uniformly bounded in $C^{1}(\overline{B_1})$. \smallskip
		
		Since $i=2$, the constant $\beta_i$ equals $9/8$. Now, we prove a uniform bound on the gradient of $v_n$. Let $j$ be a natural number between $1$ and $d$. Since $\partial_j v_n$ has zero mean over $B_1$ and is continuous, there exists $y_0\in B_1$ such that $\partial_j u(y_0)=0$. Just as in the proof of claim b.1 we deduce using the uniform bound of the $C^{9/8}$-seminorm of $v_n$, see \eqref{eq:interior_main_ingredient_v_n_upper_hoelder}, that $\abs{\partial_j v_n(x)}\le 2^{9/8}$ for any $x\in B_1$. Since $j$ was arbitrary, this yields a uniform bound on $[v_n]_{C^1(B_1)}$. Now, we repeat the arguments in the proof of claim b.1 but replace the seminorm $C^{s_0/2}$ with $C^{0,1}$ to deduce the claim b.2.\medskip 
	
		\textit{Claim c.} The sequence $\{v_n\}$ converges in $C^{\beta_i(s)/2}_{\loc}(\R^d)$ to a function $v\in C_{\loc}^{\beta_i(s)}(\R^d)$. The function $v$ satisfies 
		\begin{align}
			[v]_{C^{\beta_i}(B_R)}\le R^{\alpha_i(s)-\beta_i} \text{ for all }x\in \R^d,\\
			Lv = 0 \text{ in }\R^d \text{ in the distributional sense.}
		\end{align}
		Here, $L$ is either an operator of the type \eqref{eq:stable_op} or an elliptic second order operator with constant coefficients. \smallskip
		
		The convergence of a subsequence of $\{v_n\}$ to some function $v\in C_{\loc}^{\beta_i(s)}(\R^d)$ follows from the Arzel{\`a}-Ascoli theorem combined with claim b.1 respectively claim b.2 . Note that we already picked a subsequence such that the sequence $\{s_{k_n}\mid n\in \N\}$ converges to $s\in [s_0,1]$. Next, the equation \eqref{eq:interior_main_ingredient_v_n_upper_hoelder} leads to 
		\begin{equation}\label{eq:interior_main_ingredient_hoelder_growth_v}
			[v]_{C^{\beta_i(s)}(B_R)}\le R^{\alpha_i(s)-\beta_i(s)}
		\end{equation}
		for any $R\ge 1$. Furthermore, since $v$ has zero mean on $B_1$, there exists $y_0\in B_1$ such that $v(y_0)=0$. Thus, the bound
		\begin{equation}\label{eq:growth_bound_v_s}
			\abs{v(x)}= \abs{v(x)-v(y_0)}\le (1+\abs{x-y_0})^{\alpha_i(s)-\beta_i(s)} \abs{x-y_0}^{\beta_i(s)}\le (2+\abs{x})^{\alpha_i(s)}
		\end{equation}		
		holds. Since $\norm{f}_{L^\infty(B_1)}\le 1$, we estimate using the definition of $v_n$
		\begin{align*}
			\abs{Lv_n(x)}= \frac{\norm{f}_{L^\infty(B_1)}}{\kappa(\tilde{r}_n)} \le \frac{1}{\kappa(\tilde{r}_n)} \text{ for any } \abs{x}\le 1/\tilde{r}_n.
		\end{align*}
		Thus, the sequence $\{Lv_n\}$ converges locally uniformly to $0$ in the limit $n\to \infty$. \smallskip
		
		Using \autoref{lem:blow_up}, the sequence of operators $A_{\mu_{k_n}}^{s_{k_n}}$ converges weakly to $L$ which is either $A_\mu^s$ or $A\cdot D^2$. Moreover, the function $v$ satisfies $Lv=0$ in $\R^d$ distributionally due to the same lemma. This proves the claim c.\medskip
		
		By the translation invariance, for any $h\in \R^d$ the function $w_h(x):= v(x+h)-v(x)$ also satisfies $Lw_h=0$ in the distributional sense. Furthermore, it satisfies the growth bound
		\begin{equation}\label{eq:interior_main_ingredient_sublinear_growth}
			\abs{w_h(x)}\le \abs{h}^{\beta_i(s)}[v]_{C^{\beta_i(s)}(B_{1+\abs{h}+\abs{x}})}\le \abs{h}^{\beta_i(s)}\big(1+\abs{h}+\abs{x}\big)^{\alpha_i(s)-\beta_i(s)}.
		\end{equation}
		Note that this bound is sublinear in the variable $x$ since $\alpha_i(s)-\beta_i(s)<1$.
		Now, we are in the position to apply Liouville's theorem. If $s=1$, i.e.\ $L= A\cdot D^2$ where $A$ is elliptic, then the classical Liouville theorem with the strict sublinear bound \eqref{eq:interior_main_ingredient_sublinear_growth} proves that $w_h$ is constant. If $s<1$, i.e.\ $L= A_{\mu}^s$, then using the Liouville-type theorem \cite[Theorem 2.1]{Ros16} also leads to $w_h$ being constant. \smallskip
		
		Let $\tilde{c}_h$ be the constants obtained from the previous application of Liouville's theorem to the function $w_h$. It is easy to see that \begin{equation*}
			v(x+h)-v(x)=w_h(x)=\tilde{c}_h=w_h(0)=v(h)-v(0).
		\end{equation*} 
		Due to this and the growth bound \eqref{eq:growth_bound_v_s}, $v$ is a polynomial of order at most $1$. More precisely, if $i=1$, then $v$ is a constant. Since $v$ has mean zero over $B_1$, we find that $v$ is zero if $i=1$. If $i=2$, then it is a polynomial of order $1$. Since its partial derivatives have zero mean over $B_1$, it is a constant. Then arguing as in the case $i=1$ yields $v=0$ in both cases. 
		
		Finally, $v=0$ is a contradiction to \eqref{eq:interior_main_ingredient_v_n_lower_hoelder} in the limit $n\to \infty$. 
	\end{proof}

	\begin{theorem}\label{th:interior_s_regularity}
		Let $s_0 \in (0,1)$, $0<\lambda\le \Lambda<\infty$ be fixed. There exists a positive constant $C=C(s_0,\lambda,\Lambda,d)$ such that for all $s\in [s_0,1)$, operators $A_{\mu}^s$ as in \eqref{eq:stable_op} satisfying \eqref{eq:non-degenerate}, $f\in L^\infty(B_{3/2})$, $u\in L^\infty(\R^d)$ a distributional solution to $A_\mu^s u = f$ in $B_{3/2}$ the function $u$ satisfies 
		\begin{equation}\label{eq:interior_s_regularity}
			\norm{u}_{C^s(B_{1/2})}\le C\big(\norm{f}_{L^\infty(B_1)}+ \norm{u}_{L^\infty(B_2)} + \sup\limits_{y\in B_1} \tail_{\nu_s}(u;y) \big)
		\end{equation}
		where the tail is defined by 
		\begin{equation}\label{eq:tail}
			\tail_{\nu_s}(u;y):= \int\limits_{B_{1/2}(0)^c} \abs{u(y+r\theta)}\nu_s(\d \theta). 
		\end{equation}
	\end{theorem}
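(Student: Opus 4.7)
The plan is to deduce this estimate from Lemma~\ref{lem:interior_main_ingredient} via cutoff, mollification, and a short bootstrap. First, I would fix a cutoff $\eta \in C_c^\infty(B_{5/4})$ with $\eta \equiv 1$ on $B_{9/8}$ and set $v := u\eta$, so that $v$ is compactly supported in $B_{5/4}$ and coincides with $u$ on $B_{9/8}$. A direct computation gives, for $x \in B_1$,
\begin{equation*}
A_\mu^s v(x) = f(x) + \int\limits_{\R^d} u(x+h)\bigl(1-\eta(x+h)\bigr)\nu_s(\d h) =: F(x)
\end{equation*}
in the distributional sense. Since $1-\eta(x+h)\neq 0$ forces $\abs{h}\ge 1/8$ whenever $x\in B_1$, the extra term is bounded by $C\bigl(\norm{u}_{L^\infty(B_2)} + \sup_{y \in B_1}\tail_{\nu_s}(u;y)\bigr)$, so $v$ is a compactly supported distributional solution of $A_\mu^s v = F$ in $B_1$ whose right-hand side is controlled by the right-hand side of \eqref{eq:interior_s_regularity}.

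Next, I would approximate $v$ by mollifications $v_\delta := v * \phi_\delta \in C_c^\infty(\R^d)$, which satisfy $A_\mu^s v_\delta = F * \phi_\delta$ pointwise in $B_{1-\delta}$, with $L^\infty$-norm uniformly bounded in $\delta$. A rescaled version of Lemma~\ref{lem:interior_main_ingredient} applied to $v_\delta$ on a ball $B_r(x_0)\subset B_{1/2}$ then yields
\begin{equation*}
[v_\delta]_{C^{\alpha_i(s)}(B_r(x_0))} \le C\bigl(r^{2s-\alpha_i(s)}\norm{F*\phi_\delta}_{L^\infty(B_{2r}(x_0))} + r^{\beta_i(s,s_0)-\alpha_i(s)}\norm{v_\delta}_{C^{\beta_i(s,s_0)}(\R^d)}\bigr).
\end{equation*}
Since $v_\delta$ is supported in a fixed bounded set, its global $C^{\beta_i}$ norm equals its $C^{\beta_i}$ norm on a bounded ball, so the only remaining task is to replace this local Hölder norm by the $L^\infty$ norm of $v_\delta$.

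This is achieved by a finite bootstrap: starting from the uniform $L^\infty$ bound on $v_\delta$, repeatedly applying the scaled Lemma at shrinking radii successively upgrades $v_\delta$ to $C^{\gamma_0}$, then $C^{\gamma_1}$, and so on, until $C^{\beta_i(s,s_0)}$ is reached; a final application of the Lemma then closes the estimate as $[v_\delta]_{C^{\alpha_i(s)}(B_{1/2})} \le C\bigl(\norm{F}_{L^\infty(B_1)} + \norm{v_\delta}_{L^\infty(\R^d)}\bigr)$, uniformly in $\delta$. Letting $\delta\to 0$ and using that $u = v$ on $B_{9/8} \supset B_{1/2}$ recovers the desired bound. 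The main obstacle will be to make this bootstrap uniform in $s \in [s_0,1)$, since the Lemma's exponents $\alpha_i(s), \beta_i(s,s_0)$ jump at $s=3/4$ and the constants must remain bounded as $s\to 1-$. A cleaner alternative is to rerun the contradiction-and-blow-up argument underlying Lemma~\ref{lem:interior_main_ingredient} directly at the level of the theorem: assuming the estimate fails, one extracts a counterexample sequence, normalizes, applies \autoref{lem:blow_up} to obtain a nontrivial limit $v$ solving $Lv=0$ on $\R^d$ with strictly sublinear growth, and concludes $v$ is constant by the same Liouville-type theorem used there, contradicting the lower bound on the rescaled Hölder seminorm.
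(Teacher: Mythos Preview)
Your proposal is correct and follows essentially the same route as the paper: cutoff, mollify to land in $C_c^\infty$, apply Lemma~\ref{lem:interior_main_ingredient}, and then absorb the lower-order $C^{\beta_i}$ norm via the uniform gap $\alpha_i(s)-\beta_i(s,s_0)\ge\min\{1/8,s_0/2\}$ before passing to the limit. The only cosmetic difference is the order of operations---the paper mollifies first and then multiplies by the cutoff, writing $v_\eps=(u*\eta_\eps)\psi$, whereas you cut off first and then mollify; both are fine, and the paper likewise does not spell out the absorption step but defers to the scaling argument in \cite[Theorem~1.1(a)]{Ros16}, which is precisely the interpolation/iteration you sketch (your phrase ``bootstrap from $L^\infty$'' is slightly loose, since the Lemma already requires $C^{\beta_i}$ input, but the intended scaling absorption is the right mechanism).
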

	
	\begin{proof}
		We extend $f$ to $\R^d$ by setting $f=0$ on $B_{3/2}^c$. Let $\psi\in C_c^\infty(\R^d)$ be a nonnegative smooth bump function with $\psi=1$ in $B_{3/2}$ and $\psi=0$ on $B_2^c$, $[\psi]_{C^{s_0/2}}\le c_1$. Further, let $\eta\in C_c^\infty(\R^d)$ be nonnegative with $\supp\eta\subset B_1(0)$ and $\norm{\eta}_{L^1}=1$, and $\eta_\eps:= \eps^{-d}\eta((\cdot)/\eps)$ an approximate identity. We consider the functions $v_\eps:= (u\ast \eta_\eps)\,\psi$. Due to the translation invariance of the operators $A_{\mu}^s$ this function satisfies for any $x\in B_1$ 
		\begin{align*}
			A_{\mu}^s v_\eps(x)= f\ast \eta_\eps(x)  + \int\limits_{B_{1/2}(0)^c} (1-\psi(x+h))u\ast \eta_\eps(x+h)\nu_s(\d h)=: f_\eps(x).
		\end{align*}
		Using Young's inequality, the new inhomogeneity $f_\eps$ may be bounded as
		\begin{align*}
			\norm{f_\eps}_{L^\infty(B_1)}\le \norm{f}_{L^\infty(B_{1+\eps})}+ \sup\limits_{y\in B_1} \tail_{\nu_s}(u\ast \eta_\eps;y).
		\end{align*}
		
		An application of \autoref{lem:interior_main_ingredient} yields a positive constant $C=C(s_0,\lambda,\Lambda, d)$ such that for $i=1$ or $i=2$
		
		\begin{align}\label{eq:interior_proof_final_step}
			\begin{split}
				\norm{u\ast \eta_\eps}_{C^{\alpha_i(s)}(B_{1/2})}&=\norm{v_\eps}_{C^{\alpha_i(s)}(B_{1/2})}\le C\big( \norm{f_\eps}_{L^\infty}(B_1)+\norm{v_\eps}_{C^{\beta_i(s,s_0)}(\R^d)} \big)\\
				&\le C\big(\norm{f}_{L^\infty(B_{1+\eps})}+ \sup\limits_{y\in B_1}\tail_{\nu_s}(u\ast \eta_\eps;y) \\
				&\quad+c_1 \norm{u\ast \eta_\eps}_{L^\infty(B_2)}+ \norm{u\ast \eta_\eps}_{C^{\beta_i(s,s_0)}(B_2)} \big).
			\end{split}
		\end{align}
		Since $\beta_i(s,s_0)<\beta_i(s,s_0)+\min\{ 1/8,s_0/2 \}\le \alpha_i(s)$, scaling arguments just as in the proof of \cite[Theorem 1.1 (a)]{Ros16} allow us to absorb the term $\norm{u\ast \eta_\eps}_{C^{\beta_i(s,s_0)}(B_2)}$ in the inequality \eqref{eq:interior_proof_final_step} to the left-hand side. Finally, we take the limit $\eps \to 0+$ which yields $u\in C^{\alpha_i(s)}(\overline{B_{1/2}})$ and the desired regularity estimate since $\alpha_i(s)\ge s$. 	 
	\end{proof}

	\begin{remark}
		Higher order interior regularity estimates, i.e.\ bounds on $\norm{u}_{C^{2s+\alpha}}$, as in \cite[Proposition 3.2, Proposition 3.3]{Ros16} can be proved robust as $s\to 1-$ with the same adaptations as in \autoref{lem:blow_up}, \autoref{lem:interior_main_ingredient}, and \autoref{th:interior_s_regularity}. But note that the respective constants blow up as $\alpha+2s$ approaches an integer value. For this reason, we reduced the \autoref{th:interior_s_regularity} to interior $C^s$-regularity. 
	\end{remark}

	\section{A subharmonic function in one dimension}\label{sec:one_dimensional}
	In this section, we construct a sub-solution to the fractional Laplacian in one dimension. In contrast to \cite{RoFe24}, we need a function that is much closer to the well-known $s$-harmonic function $(\cdot)_+^s$ in the half line than $(\cdot)_+^{s+\eps}$ as observed in \autoref{sec:comparision_to_RO}. Rather, we will pick a small perturbation of $(\cdot)^s_+$, say $(\cdot)_+^s\zeta(\cdot)$. We will collect all the assumptions needed for $\zeta$. \smallskip
	 
	The function $\zeta\in C([0,\infty))\cap C^2((0,\infty))\cap C^{3}((0,2))$ needs to satisfy:
	\begin{itemize}
		\item[\namedlabel{ass:zeta_pic}{\normalfont{(\textbf{Z0})}}]{The function $\zeta$ is positive in $(0,\infty)$, increasing, concave, and $\zeta(0)=0$.}
		 \item[\namedlabel{ass:zeta_3rd}{\normalfont{(\textbf{Z1})}}]{There exists constants $c_1>0$, $t_0\in(0,1)$ such that \begin{equation}\label{eq:assumption_zeta_d_1}
		 		t^2\zeta^{(3)}(t)\ge -c_1 \zeta^\prime(t) \text{ for any $0<t<t_0$ }.
		 \end{equation} } 
		 \item[\namedlabel{ass:zeta_ndecr}{\normalfont{(\textbf{Z2})}}]{The map $t\mapsto t\zeta^\prime(t)$ is nondecreasing.}
		 \item[\namedlabel{ass:zeta_iota}{\normalfont{(\textbf{Z3})}}]{There exist positive constants $\iota$, $t_0$ such that the map $t\mapsto \frac{\zeta(t)}{t^{\iota}}$ is monotonically decreasing in $(0,t_0]$.}
		 \item[\namedlabel{ass:zeta_growth}{\normalfont{(\textbf{Z4})}}]{There exists a positive constant $c_2$ such that $\zeta(t)\le c_2(1+t)^{s/2}$.}
	\end{itemize}
	
	The next proposition proves that $(-\Delta)_{\R}^s[t_+^s\,\zeta(t)]$ has a sign near zero and a possibly stronger singularity than $t^{\eps-s}$ at $t=0$. 
	\begin{proposition}\label{prop:subsolution_d_1}
		Let $\zeta\in C([0,\infty))\cap C^2((0,\infty))\cap C^{3}((0,2))$ such that \ref{ass:zeta_pic}, \ref{ass:zeta_3rd}, \ref{ass:zeta_ndecr}, and \ref{ass:zeta_growth} hold. There exists a constant $C>0$ such that for all $s\in (0,1)$ and all $0<t<t_0$
		\begin{equation*}
			(-\Delta)_{\R}^s[(\cdot)_+^s\,\zeta](t)\le -s^{6}Ct^{1-s} \zeta^\prime(t).
		\end{equation*}
		Here, the operator $(-\Delta)_{\R}^s$ is the fractional Laplacian in one-dimension given by 
		\begin{equation*}
			(-\Delta)_{\R}^s u(x):= (1-s) \pv \int\limits_{\R} \frac{u(x)-u(y)}{\abs{x-y}^{1+2s}} \d y.
		\end{equation*}
	\end{proposition}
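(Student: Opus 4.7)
The starting point is the well-known $s$-harmonicity of $t_+^s$ on $(0,\infty)$. I would exploit this via the Leibniz-type decomposition
\begin{equation*}
	t^s\zeta(t) - y_+^s\zeta(y_+) = \zeta(t)(t^s - y_+^s) - y_+^s\big[\zeta(y_+)-\zeta(t)\big],
\end{equation*}
which, after substitution into the definition of $(-\Delta)^s_\R$ and use of $(-\Delta)^s_\R[(\cdot)_+^s](t)=0$, gives
\begin{equation*}
	(-\Delta)^s_\R[(\cdot)^s_+\zeta](t) = -(1-s)\,\pv\int_0^\infty \frac{y^s[\zeta(y)-\zeta(t)]}{|t-y|^{1+2s}}\,\d y.
\end{equation*}
Assumption \ref{ass:zeta_growth} provides integrability at infinity for the tail. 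Rescaling $y=tr$ yields
\begin{equation*}
	(-\Delta)^s_\R[(\cdot)^s_+\zeta](t) = -(1-s)\,t^{-s}\,I(t,s), \qquad I(t,s):=\pv\int_0^\infty\frac{r^s[\zeta(tr)-\zeta(t)]}{|1-r|^{1+2s}}\,\d r.
\end{equation*}
Thus the claim reduces to a lower bound $(1-s)I(t,s)\ge Cs^6\, t\zeta'(t)$.

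The decisive input is assumption \ref{ass:zeta_ndecr}. From the monotonicity of $u\mapsto u\zeta'(u)$ we get $\zeta'(u)\ge t\zeta'(t)/u$ on $[t,\infty)$ and $\zeta'(u)\le t\zeta'(t)/u$ on $(0,t]$; integrating either way produces the \emph{pointwise} lower bound
\begin{equation*}
	\zeta(tr)-\zeta(t)\ge t\zeta'(t)\ln r\quad\text{for every }r>0.
\end{equation*}
Since the excess $\zeta(tr)-\zeta(t)-t\zeta'(t)\ln r$ is nonnegative and, by a Taylor estimate, $O((r-1)^2)$ near $r=1$, its contribution to the principal-value integral is a legitimate nonnegative ordinary integral; this rigorously justifies
\begin{equation*}
	I(t,s) \ge t\zeta'(t)\,J(s),\qquad J(s):=\pv\int_0^\infty\frac{r^s\ln r}{|1-r|^{1+2s}}\,\d r.
\end{equation*}

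The remaining step is to show $(1-s)J(s)$ is bounded below by a positive universal constant. A change of variables $r\mapsto 1/r$ on $(0,1)$ folds the two halves together:
\begin{equation*}
	J(s) = \int_1^\infty\frac{r^{s-1}(r-1)\ln r}{(r-1)^{1+2s}}\,\d r = \int_1^\infty\frac{r^{s-1}\ln r}{(r-1)^{2s}}\,\d r,
\end{equation*}
which is manifestly positive. Using the elementary inequality $\ln r\ge (r-1)/r$ for $r\ge 1$ and the substitution $v=r-1$, I would compare $J(s)$ with a Beta integral:
\begin{equation*}
	J(s) \ge \int_0^\infty\frac{v^{1-2s}}{(1+v)^{2-s}}\,\d v = B(2-2s,s) = \frac{\Gamma(2-2s)\Gamma(s)}{\Gamma(2-s)}.
\end{equation*}
The asymptotics $\Gamma(2-2s)\sim\tfrac{1}{2(1-s)}$ as $s\to 1^-$ and $\Gamma(s)\sim 1/s$ as $s\to 0^+$ show $(1-s)J(s)\to 1/2$ at $s=1^-$ while $(1-s)J(s)\to\infty$ at $s=0^+$; combined with continuity and positivity on the compact middle range, this yields a uniform lower bound $(1-s)J(s)\ge c_0>0$, which is far stronger than $Cs^6$.

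\textbf{Main obstacle.} The delicate point is not the sign or the asymptotics of $J(s)$ but justifying the manipulations with the principal-value integral: one must check that the singular kernel $|1-r|^{-1-2s}$ can be integrated against the log lower bound, and that the ``excess'' remainder is an honest nonnegative Lebesgue integrand. Once the logarithmic bound from \ref{ass:zeta_ndecr} is available and the $s$-harmonicity of $t^s_+$ is invoked to eliminate the dominant term, the remaining analysis is a clean application of Beta-function identities. Notably this route does not appear to require \ref{ass:zeta_3rd}, which presumably serves to harmonize the assumptions with later (higher-dimensional or higher-derivative) estimates in Section 5 rather than with this one-dimensional statement.
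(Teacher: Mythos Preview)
Your argument is correct and takes a genuinely different route from the paper's proof.

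The paper proceeds by rewriting $(-\Delta)^s_{\R}[(\cdot)^s_+\zeta](t)=\frac{\zeta(t)}{t^s}(-\Delta)^s_{\R}f_t(1)$ with $f_t(r)=r_+^s\,\zeta(tr)/\zeta(t)$, then constructs a \emph{touching} translated power $g_{a,\kappa}(r)=\kappa(r-a)_+^s$ with $g_{a,\kappa}\le f_t$ and $g_{a,\kappa}(1)=f_t(1)$, $g_{a,\kappa}'(1)=f_t'(1)$ (Claim~A, using \ref{ass:zeta_ndecr}). Since $g_{a,\kappa}$ is $s$-harmonic, negativity follows, but to make this quantitative the paper proves a second-order lower bound $f_t(r)-g_{a,\kappa}(r)\ge \tfrac{c_0}{2}\tfrac{t\zeta'(t)}{\zeta(t)}(r-1)^2$ on a short interval $(1,1+r_0)$ (Claim~B). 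This step requires a third-order Taylor expansion of $f_t-g_{a,\kappa}$ and is precisely where \ref{ass:zeta_3rd} enters; the smallness constraints on $r_0$ produce the $s^6$ loss.

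Your approach bypasses the touching-function construction entirely: the Leibniz split plus the $s$-harmonicity of $(\cdot)_+^s$ reduce the problem to $I(t,s)$, and the logarithmic inequality $\zeta(tr)-\zeta(t)\ge t\zeta'(t)\ln r$---a direct integration of \ref{ass:zeta_ndecr}---gives $I(t,s)\ge t\zeta'(t)J(s)$ in one stroke. The symmetrization $r\mapsto 1/r$ turns $J(s)$ into a manifestly positive integral, and the Beta comparison yields $(1-s)J(s)\ge c_0>0$ uniformly in $s\in(0,1)$; in fact $(1-s)J(s)\sim 1/s$ as $s\to 0^+$, so your constant is strictly better than the paper's $s^6C$ (the paper itself remarks that $s^6C$ is not optimal as $s\to 0^+$). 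Your observation that \ref{ass:zeta_3rd} is not needed here is correct: in the paper it is consumed solely by Claim~B, and the later higher-dimensional barrier estimates (\autoref{prop:subsolution_d}) invoke only the conclusion of \autoref{prop:subsolution_d_1}, not \ref{ass:zeta_3rd} directly. The technical point you flag---splitting the principal value into the $\ln r$ piece and a nonnegative integrable excess $E(r)=O((r-1)^2)$---is exactly the right justification, and $E''(1)=t^2\zeta''(t)+t\zeta'(t)\ge 0$ by \ref{ass:zeta_ndecr} confirms the required local vanishing order.
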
	
	\begin{remark}
		The constant $s^6 C$ in the previous proposition is not optimal in the limit $s\to 0+$.
	\end{remark}
	\begin{proof}
	Since $(\cdot)_+^s\zeta$ is locally $C^2$ and grows slower than $(1+\abs{\cdot})^{3s/2}$, the term $(-\Delta)_\R^s[(\cdot)_+^s\zeta]$ exists in $\R_+$. We fix $0<t<1$. Now, we use the change of variables $h=tr$ to find that
	\begin{align*}
		(-\Delta)_{\R}^s[(\cdot)_+^s\,\zeta](t)&=(1-s) \pv\int\limits_{\R}\frac{t^s\zeta(t)-(t+h)_+^s\zeta(t+h)}{\abs{h}^{1+2s}}\d h\\
		&=(1-s)\frac{\zeta(t)}{t^s} \pv\int\limits_{\R}\frac{1-(1+r)_+^s\frac{\zeta(t(1+r))}{\zeta(t)}}{\abs{r}^{1+2s}}\d r= \frac{\zeta(t)}{t^s}(-\Delta)_{\R}^sf_t (1),
	\end{align*}
	where the function $f_t$ is given by
	\begin{equation*}
		f_t(r)= (r)_+^s\frac{\zeta(tr)}{\zeta(t)}.
	\end{equation*}
	Just as in \cite{RoFe24}, we want to compare this function to a translated version of $(\cdot)_+^s$ which is $s$-harmonic in $\R_+$. Note that, if $\zeta(r)=r^\alpha$, then $f_t$ is independent of $t$. In our setup, this might not be the case, e.g.\ choose $\zeta(r)=1/\ln(1/r)^2$. More precisely, we choose $\kappa\ge 1$, $0<a<1$ such that $g_{a,\kappa}(r)= \kappa (r-a)_+^s$ touches $f_t$ at $r=1$ and is smaller for any $r\in \R_+\setminus\{ 1 \}$, see claim A. Thus, the constants $a$ and $\kappa$ must be chosen such that 
	\begin{align*}
		f_t(1)=g_{a,\kappa}(1), \qquad f_t'(1)=g_{a,\kappa}'(1).
	\end{align*}
	The first property is satisfied as soon as 
	\begin{align*}
		\kappa(1-a)^s = 1.
	\end{align*}
	The second one is true whenever 
	\begin{equation*}
		s+\frac{\zeta'(t)t}{\zeta(t)}= s\kappa (1-a)^{s-1}= s\kappa^{1/s-1}=s \frac{1}{1-a}.
	\end{equation*}
	We fix these choices, i.e.\ 
	\begin{align*}
		a := \frac{\zeta'(t)t}{s \zeta(t)+\zeta'(t)t},\quad
		\kappa:=\big( 1+\frac{\zeta'(t)t}{s\zeta(t)} \big)^{s}.
	\end{align*}
	Since $\zeta$ is concave and strictly increasing, we know that $t\zeta'(t)\le \zeta(t)$, $\zeta'>0$ and, thus, the inequalities
	\begin{equation}
		0<\frac{1}{s+1}\frac{t\zeta^\prime(t)}{\zeta(t)}\le a\le \frac{1}{s+1}<1
	\end{equation}
	 and $1< \kappa\le (1+1/s)^s$ hold.

	\textbf{Claim A.} With this choice of $a$ and $\kappa$, we have $g_{a,\kappa}\le f_t$ on $\R$.
	
	\textit{Proof of claim A.} If $r\le a$, then $g_{a,\kappa}(r)=0\le f_t(r)$. Since the two functions coincide at $r=1$, it remains to prove the $g_{a,\kappa}^\prime(r)\le f_t^\prime(r)$ for $r>1$ and $g_{a,\kappa}^\prime(r)\ge f_t^\prime(r)$ for $a<r<1$. We calculate 
	\begin{align*}
		f_t^\prime(r)&= s r^{s-1}\frac{\zeta(tr)}{\zeta(t)}+ r^{s-1}\frac{tr\zeta^\prime(tr)}{\zeta(t)},\\
		g_{a,\kappa}^{\prime}(r)&= s\kappa(r-a)^{s-1}= s r^{s-1} k(r)+\frac{t\zeta^{\prime}(t)}{\zeta(t)}r^{s-1} k(r),\\
		\intertext{where}
		k(r)&:= \Big( 1+(1-1/r)\frac{t\zeta^\prime(t)}{s\zeta(t)} \Big)^{s-1}.
	\end{align*}
	We notice a few things. If $r>1$, then $k(r)\le1$. If $r<1$, then $k(r)\ge 1$. Since $\zeta$ is increasing, we know $\zeta(tr)\ge \zeta(t)$ if $r>1$ and $\zeta(tr)\le \zeta(t)$ if $r<1$. This yields 
	\begin{equation}
		\begin{split}
			sr^{s-1}k(r)&\le sr^{s-1}\le s r^{s-1}\frac{\zeta(tr)}{\zeta(t)} \quad \text{ if }r>1,\\
			sr^{s-1}k(r)&\ge sr^{s-1}\ge s r^{s-1}\frac{\zeta(tr)}{\zeta(t)} \quad \text{ if }r<1.
		\end{split}
	\end{equation}
	By assumption \ref{ass:zeta_ndecr}, the map $x\mapsto x\zeta^\prime(x)$ is nondecreasing. Thus, we conclude 
	\begin{align*}
		\frac{t\zeta^{\prime}(t)}{\zeta(t)}r^{s-1} k(r)&\le \frac{tr\zeta^{\prime}(tr)}{\zeta(t)}r^{s-1} k(r)\le \frac{tr\zeta^{\prime}(tr)}{\zeta(t)}r^{s-1}\quad \text{ if }r>1,\\
		\frac{t\zeta^{\prime}(t)}{\zeta(t)}r^{s-1} k(r)&\ge \frac{tr\zeta^{\prime}(tr)}{\zeta(t)}r^{s-1} k(r)\ge \frac{tr\zeta^{\prime}(tr)}{\zeta(t)}r^{s-1}\quad \text{ if }r<1.
	\end{align*}
	These two observations yield $g_{a,\kappa}^\prime(r)\le f_t^\prime(r)$ for $r>1$ and $g_{a,\kappa}^\prime(r)\ge f_t^\prime(r)$ for $r<1$. This proves the claim. \medskip
	
	Claim A together with $g_{a,\kappa}\ne f_t$ immediately yields the negativity of $(-\Delta)_{\R}^s[(\cdot)_+^s\,\zeta](t)$. Since $f_t-g_{a,\kappa}$ has a global minimum at $1$ and $g_{a,\kappa}$ is $s$-harmonic in $(a,\infty)\ni 1$, we can write
	\begin{equation*}
		(-\Delta)_{\R}^s[(\cdot)_+^s\,\zeta](t)= \frac{\zeta(t)}{t^s}(-\Delta)^s[f_t](1)= \frac{\zeta(t)}{t^s}(-\Delta)^s[f_t-g_{a,\kappa}](1) <0.
	\end{equation*}
	In contrast to \cite{RoFe24}, this observation is not sufficient. Since $f_t$ may converge to $(\cdot)_+^s$ as $t\to 0+$, the previous inequality may become an equality in the limit $t\to 0$. Thus, we must analyze $(-\Delta)^s[f_t-g_{a,\kappa}](1)$ more carefully. 
	
		\textbf{Claim B.} There exists a constant $r_0>0$ such that for any $0<t<t_0$ and any $r\in (1,1+r_0)$
	\begin{equation}\label{eq:claim1}
		f_t(r)-g_{a,\kappa}(r)\ge \frac{c_0}{2} \frac{t\zeta^\prime(t)}{\zeta(t)}(1-r)^2.
	\end{equation}
	
	\textit{Proof of claim B.} First off, we assume $r_0<1$. From the construction of $g_{a,\kappa}$, we know that $h_t:=f_t-g_{a,\kappa}$ satisfies $h_t^\prime(1)=0=h_t(1)$. We expand the function $h_t$ at the point $r=1$. This yields
	\begin{equation*}
		h_t(r)=\frac{1}{2}h_t^{\prime\prime}(1)(r-1)^2 + \iota(r,t).
	\end{equation*}
	Here $\iota$ is the error of the approximation. Next, let's calculate $h_t^{\prime\prime}$. 
	\begin{align*}
		h_t^{\prime\prime}(1)&=s(s-1)+2s\frac{t\zeta^\prime(t)}{\zeta(t)}+\frac{t^2\zeta^{\prime\prime}(t)}{\zeta(t)}-s(s-1)\Big(\frac{s\zeta(t)+t\zeta^\prime(t)}{s\zeta(t)}\Big)^2\\
		&= 2\frac{t\zeta^\prime(t)}{\zeta(t)}+\frac{t^2\zeta^{\prime\prime}(1)}{\zeta(t)}+\frac{1-s}{s}\Big(\frac{t\zeta^\prime(t)}{\zeta(t)}\Big)^2\ge \frac{t\zeta^\prime(t)}{\zeta(t)}\Big( 2+\frac{t\zeta^{\prime\prime}(t)}{\zeta^\prime(t)} \Big)\ge \frac{t\zeta^\prime(t)}{\zeta(t)}.
	\end{align*}
	In the last equality we used \ref{ass:zeta_ndecr}. Now, we bound the error term. By Taylor's theorem, we know that there exists $z_r\in (1,r)$ such that
	\begin{align*}
		\iota(r,t)= \frac{h^{(3)}(z_r)}{6} (r-1)^3.
	\end{align*}
	We calculate the third derivative $h^{(3)}$: 
	\begin{align*}
		h^{(3)}(z_r)&= s(s-1)(s-2)z_r^{s-3}\frac{\zeta(tz_r)}{\zeta(t)}+ 3s(s-1)z_r^{s-2}\frac{t\zeta^\prime(tz_r)}{\zeta(t)}+ 3s z_r^{s-1}\frac{t^2\zeta^{\prime\prime}(tz_r)}{\zeta(t)}\\
		&\quad+z_r^{s}\frac{t^3\zeta^{(3)}(tz_r)}{\zeta(t)}-s(s-1)(s-2)\kappa(z_r-a)^{s-3}\\
		&= \I{1}+\I{2}+\I{3}+\I{4}+\I{5}.
	\end{align*}
	Note that the second term is easily bound using the monotonicity of $\zeta^\prime$ and $z_r>1$ by
	\begin{align*}
		\I{2}(r-1)^3/6\ge -\frac{s(1-s)}{2} 1^{s-2}r_0\frac{t\zeta^\prime(t)}{\zeta(t)}(r-1)^2.
	\end{align*}
	If we pick $r_0$ so small such that $2s(1-s)r_0<2^{-4}$, then this term may be estimated as in the claim.
	
	To estimate the third term, we use \eqref{eq:assumption_zeta_d_1}. This and again the monotonicity of $\zeta^\prime$ yield
	\begin{align*}
		\I{3}(r-1)^3/6 \ge -\frac{s}{2}1^{s-2}\frac{t\zeta^\prime(t)}{\zeta(t)}(r-1)^3.
	\end{align*}
	Here, we need to ensure that $r_0$ is so small such that $2sr_0<2^{-4}$. The term $\I{4}$ is estimated easily using the assumption \eqref{eq:assumption_zeta_d_1}. This together with the monotonicity of $\zeta^\prime$ yields
	\begin{align*}
		\I{4}(r-1)^3/6\ge -1^{s-2}\frac{c_1}{6} \frac{t\zeta^\prime(tz_r)}{\zeta(t)}(r-1)^3\ge-\frac{c_1r_0}{6} \frac{t\zeta^\prime(t)}{\zeta(t)}(r-1)^2.
	\end{align*}
	Thus, we pick $r_0>0$ sufficiently small such that $c_1 \,r_0 /6\le 2^{-5}$.
	Now, we consider the terms $\I{1}$ and $\I{5}$. Note that these terms admit additional cancellation. Using the monotonicity of $\zeta$, the convexity of $x\mapsto x^{s-3}$, and $(1+x)^s\le 1+sx$, we find 
	\begin{align*}
		\I{1}+\I{5}&\ge s(1-s)(2-s)\Big( z_r^{s-3}-(z_r-a)^{s-3}+(1- \kappa)(z_r-a)^{s-3} \Big)\\
		&\ge s(1-s)(2-s)\Big( -(3-s)(z_r-a)^{s-4}a -s\frac{t\zeta^\prime(t)}{s\zeta(t)}(z_r-a)^{s-3} \Big)\\
		&\ge -(1-s)(2-s)\frac{3 \,s^{s-3}}{(1+s)^{s-3}} \frac{t\zeta^\prime(t)}{\zeta(t)}.
	\end{align*}
	If we ensure that $(1-s)(2-s)3s^{s-3}/(1+s)^{s-3}r_0<2^{-5}$, then we find 
	\begin{align*}
		\iota(r,t)\ge - 2^{-4} \frac{t\zeta^\prime(t)}{\zeta(t)}(r-1)^2
	\end{align*}
	which proves the claim. \medskip

	Using this claim, we easily deduce an upper bound on $(-\Delta)^s_{\R}[(\cdot)_+^s \zeta](t)$. Again, since $f_t-g_{a,\kappa}$ has a global minimum at $1$, see claim A, we write for any $t<t_0$
	\begin{equation*}
		(-\Delta)^s_{\R}[(\cdot)_+^s \zeta](t)\le -\frac{\zeta(t)}{t^s} (1-s)\int\limits_{1}^{1+r_0} \frac{f_t(r)-g_{a,\kappa}(r)}{\abs{1-r}^{1+2s}}\d r\le - \frac{c_0}{2} r_0^{2-2s}\frac{t\zeta^\prime(t)}{t^s}.
	\end{equation*}
	In the last inequality we used claim B. 
	\end{proof}

	\section{Barriers in higher dimensions}\label{sec:higher_dimensional}
	In this section, we construct explicit super and sub-solution which we will use as barrier functions in the proof of \autoref{prop:distance_lower_bound_with_pUB}, \autoref{prop:distance_lower_bound_without_pUB}, \autoref{prop:distance_upper_bound_with_pUB}, and \autoref{prop:distance_upper_bound_without_pUB}. \smallskip
	
	We fix a uniform $\cdini$-domain (respectively $2s$-$\cdini$-domain) $D \subset \R^d$ with a localization radius $\rho>0$ and a modulus of continuity $\tilde{\omega}$, see \autoref{def:c1dini_domain}. Recall that the map $\tilde{\omega}:[0,\infty)\to [0,\infty)$ is continuous, nondecreasing, sublinear, i.e.\ $\limsup_{t\to \infty}\tilde{\omega}(t)/t\le M$, $\tilde{\omega}(0)=0$, and satisfies \eqref{eq:dini}.
	
	We need the following modification of $\tilde{\omega}$ which is slightly bigger and satisfies some further properties which are needed in the following estimates, see also \autoref{rem:replace_modulus}.
	
	\begin{lemma}\label{lem:modulus}
		Let $0<\iota<1/3$ and $\overline{\omega}:[0,\infty)\to (0,\infty)$ a nondecreasing, continuous, sublinear function satisfying \eqref{eq:dini} respectively \eqref{eq:dini_2s}. There exists a continuous, increasing, and concave function $\omega:[0,\infty)\to [0,\infty)$, $\omega\in C^2((0,\infty))$ such that $\omega\ge \overline{\omega}$, and $\omega$ satisfies \eqref{eq:dini} respectively \eqref{eq:dini_2s}. Furthermore, there exists $t_0>0$ such that 
		\begin{align}
			\frac{\omega(t)}{t^{\iota}} \text{ is decreasing in }(0,t_0),\label{eq:modulus_decreasing_iota}\\
			t^2\omega^{\prime\prime}(t)\ge- \omega(t)-3 t\omega^{\prime}(t)\quad \text{ for all }0<t<t_0.\label{eq:modulus_second_derivative}
		\end{align}
	\end{lemma}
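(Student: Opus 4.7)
The plan is to construct $\omega$ explicitly. Since $\overline{\omega}$ is bounded by sublinearity, I would set
\[
\psi_0(t):=\sup_{r\ge t}\frac{\overline{\omega}(r)}{r^\iota},\qquad t>0,
\]
which is well-defined, nonincreasing, and the candidate $\omega_0(t):=t^\iota\psi_0(t)$ automatically satisfies $\omega_0\ge\overline{\omega}$ and $\omega_0/t^\iota=\psi_0$ nonincreasing (giving \eqref{eq:modulus_decreasing_iota}). Using that $\overline{\omega}(r)(t/r)^\iota\le \overline{\omega}(r)$ and splitting the sup into $r\in[t,\sqrt{t}]$ and $r\ge\sqrt{t}$, one checks $\omega_0(t)\to 0$ as $t\to 0^+$: the second range is bounded by $Mt^{\iota/2}$ and the first by $\overline{\omega}(\sqrt{t})$. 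In the two natural regimes (slow-decay, where $\omega_0\approx \overline{\omega}$ near $0$, and fast-decay, where $\omega_0\lesssim Ct^\iota$), both the Dini and the $2s$-Dini integrals transfer from $\overline{\omega}$ to $\omega_0$ by Fubini and direct estimation.

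Next, I would replace $\omega_0$ by a $C^2$, concave function $\omega\ge\omega_0$ by first taking the least concave majorant and then mollifying on the logarithmic scale, \ie convolving $\psi_0\circ\exp$ with a bump in the $\log t$ variable. This preserves the nonincreasing ratio $\omega/t^\iota$ and the Dini integrability up to harmless constants. A direct substitution of $\omega(t)=t^\iota\psi(t)$ yields
\[
t^2\omega''+3t\omega'+\omega=(\iota+1)^2 t^\iota\psi+(2\iota+3)t^{\iota+1}\psi'+t^{\iota+2}\psi'',
\]
which on the test family $\psi(t)=t^{-\beta}$ factors as $t^{\iota-\beta}(\beta-\iota-1)^2\ge 0$, so \eqref{eq:modulus_second_derivative} is automatic on power-type comparison functions. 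After mollification $\psi$ behaves essentially like such a power with $|t\psi'/\psi|\le \iota<1/3$, so the positive leading term $(\iota+1)^2\psi$ absorbs the signed lower-order terms, and \eqref{eq:modulus_second_derivative} follows. Concavity is checked similarly: in $\omega''=\iota(\iota-1)t^{\iota-2}\psi+2\iota t^{\iota-1}\psi'+t^\iota\psi''$ the first two terms are nonpositive and dominate the third for $t$ small.

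The main obstacle is to reconcile $\omega\ge\overline{\omega}$, concavity, $\omega(t)/t^\iota$ decreasing, and $C^2$-smoothness simultaneously. Simply taking $\omega=\overline{\omega}$ fails \eqref{eq:modulus_decreasing_iota} whenever $\overline{\omega}$ grows faster than $t^\iota$ near $0$ (\eg $\overline{\omega}(t)=t^{1/2}$ with $\iota<1/2$), while $\omega=Ct^\iota$ fails to dominate moduli that decay slower than $t^\iota$. The sup-envelope $t^\iota\psi_0$ interpolates correctly between the two regimes but lacks regularity; the mollification has to be done on the logarithmic scale so as not to spoil the nonincreasing-ratio property, and the restriction $\iota<1/3$ provides the numerical slack $(\iota+1)^2/(2\iota+3)>\iota$ that lets the concavification and smoothing be carried out while still preserving \eqref{eq:modulus_second_derivative}.
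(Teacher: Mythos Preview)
Your sup-envelope $\omega_0(t)=t^\iota\psi_0(t)$ is a nice starting point, and the observation that in the logarithmic variable $s=\ln t$ the inequality \eqref{eq:modulus_second_derivative} becomes $(D+\iota+1)^2P\ge 0$, i.e.\ convexity of $s\mapsto e^{(\iota+1)s}P(s)=t\omega(t)$, is genuinely useful. However, as written there are two real gaps.

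First, the step ``take the least concave majorant, then mollify on the log scale'' is where the ratio property and concavity fight each other. On a linear piece $\omega_c(t)=\alpha+\beta t$ of the concave majorant one has $t\omega_c'(t)=\beta t$, and $t\omega_c'\le\iota\omega_c$ fails as soon as $t>\iota\alpha/((1-\iota)\beta)$; so the majorant can destroy the nonincreasing ratio $\omega/t^\iota$. You do not show that the subsequent log-mollification repairs this, nor that it restores concavity in the $t$-variable (note that $\omega_c(e^s)$ is in general not concave in $s$, so convolving $P$ with a bump does not obviously yield a concave $\omega$). Your concavity check ``the first two terms are nonpositive and dominate the third for $t$ small'' would require $t^2|\psi''|\ll\psi$, which after log-mollification is exactly the kind of bound you have not established.

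Second, for \eqref{eq:modulus_second_derivative} you verify the identity on pure powers and then assert it extends because $|t\psi'/\psi|\le\iota$. But that bound controls only the $(2\iota+3)t^{\iota+1}\psi'$ term; the term $t^{\iota+2}\psi''$ still has to be bounded below, and you give no argument for this. In your log-variable reformulation, you would need $Q_0(s)=e^{(\iota+1)s}\psi_0(e^s)$ to be convex (so that convolution with a nonnegative weight preserves it), and nothing in the construction of $\psi_0$ as a supremum guarantees that.

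The paper proceeds quite differently and avoids these interdependencies: it first builds concavity via an explicit inf-envelope $\omega_1(t)=\inf_\delta\{\overline{\omega}(\delta)+Mt_0t/\delta\}$ and averaging, then \emph{adds} an integral correction $j_\iota(t)=(1-\iota)t^\iota\int_t^\infty\min(\omega_2,\omega_2(1))r^{-1-\iota}dr$ engineered so that $t(\omega_2+j_\iota)'\le\iota(\omega_2+j_\iota)$ while concavity and Dini survive, and finally obtains \eqref{eq:modulus_second_derivative} by a double averaging $\omega_4(t)=c\,\fint_{t/2}^t\fint_{r/2}^r\omega_3$ followed by an explicit (if laborious) computation. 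Each modification is checked to preserve all previously established properties. Your route could plausibly be completed, but the order of operations and the verification of \eqref{eq:modulus_second_derivative} need substantially more care than you have given.
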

	\begin{proof}
		\textit{Step 1. (Concavity of $\omega$)} By the sublinearity of $\overline{\omega}$, we fix two constants $M, t_0>0$ such that $\overline{\omega}(t)\le M t$ for $t\ge t_0$. Without loss of generality, we assume $t_0<1/2$ while, if needed, enlarging the constant $M$ slightly. Now, set 
		\begin{align*}
			\omega_1(t)= \inf\{ \overline{\omega}(\delta)+\frac{Mt_0 t}{\delta}  \mid 0<\delta<t_0\}.
		\end{align*}
		Next, we prove that $\omega_1$ is not smaller than $\overline{\omega}$. Let $0<\delta<t_0$ and $t>0$ be arbitrary. If $0<t\le \delta$, then $\overline{\omega}(t)\le \overline{\omega}(\delta)\le \overline{\omega}(\delta)+Mt_0t/\delta$ since $\overline{\omega}$ is nondecreasing. If $t\ge t_0$, then $\overline{\omega}(t)\le M t\le Mt_0t/\delta \le \overline{\omega}(\delta)+Mt_0t/\delta$ by the choice of $M$ and $t_0$. If $\delta<t< t_0$, then 
		\begin{equation*}
			\overline{\omega}(t)\le \overline{\omega}(t_0)\le Mt_0\le Mt_0t/\delta\le \overline{\omega}(\delta)+ Mt_0t/\delta.
		\end{equation*} 
		Since $\delta$ was arbitrary, we find $\overline{\omega}(t)\le \omega_1(t)$. \smallskip
		
		Obviously, the function $\omega_1$ is also nondecreasing, continuous, and satisfies $\omega_1(0)=0$. Furthermore, $\omega_1$ is concave since 
		\begin{align*}
			&\omega_1(\lambda x + (1-\lambda)y)= \inf\{ \overline{\omega}(\delta)+\frac{Mt_0}{\delta}(\lambda x + (1-\lambda)y)  \mid 0<\delta<t_0\}\\
			&\qquad\qquad\ge \lambda\inf\{ \overline{\omega}(\delta)+\frac{Mt_0}{\delta} x  \mid 0<\delta<t_0\}+(1-\lambda)\inf\{ \overline{\omega}(\delta)+\frac{Mt_0}{\delta} y  \mid 0<\delta<t_0\}.
		\end{align*}
		Next, we prove that $\omega_1$ satisfies \eqref{eq:dini}. For this, we use the trivial bound
		\begin{equation*}
			\omega_1(t)\le \overline{\omega}(t\ln(1/t)^2)+ Mt_0/\ln(1/t)^2
		\end{equation*}
		for any $0<t<t_0$. Firstly, the second term is integrable with $1/t$ near zero. Secondly, using the substitution $y=t\ln(1/t)^2$, we find
		\begin{align*}
			\int\limits_{0}^{e^{-4}} \frac{\overline{\omega}(t\ln(1/t)^2)}{t}\d t\le 2\int\limits_{0}^{4^2/e^{4}}\frac{\overline{\omega}(y)}{y}\d y<\infty. 
		\end{align*}
		One checks that $\omega_1$ satisfies \eqref{eq:dini_2s} using the same  bound and the same change of variables. Now, we regularize $\omega_1$ by taking averaged means, say $\omega_2(t)=\fint_t^{2t}\omega_1(r)\d r$. This function inherits all previous properties of $\omega_1$. \medskip
		
		\textit{Step 2. (\eqref{eq:modulus_decreasing_iota})} We define the function $j_{\iota}:[0,\infty)\to [0,\infty)$ via 
		\begin{equation*}
			j_\iota(t):= (1-\iota)t^{\iota}\int\limits_{t}^{\infty}\frac{\min\{ \omega_2(r),\omega_2(1) \}}{r^{1+\iota}}\d r.
		\end{equation*}
		This function satisfies for any $t<1$
		\begin{equation}\label{eq:j_iota_correction_function}
			tj_\iota^{\prime}(t)= \iota j_{\iota}(t)-(1-\iota)\omega_2(t).
		\end{equation}
		Since $\omega_2$ is nondecreasing, we also know
		\begin{equation*}
			j_\iota(t)\ge (1-\iota)t^{\iota}\int\limits_{t}^\infty \frac{\omega_2(t)}{r^{1+\iota}}\d r = \frac{1-\iota}{\iota}\omega_2(t).
		\end{equation*}
		Plugging this bound into \eqref{eq:j_iota_correction_function} reveals that $j_\iota$ is nondecreasing. Next, we prove that $j_{\iota}$ is also concave. Firstly, $j_\iota\in C([0,\infty))\cap C^3_{\loc}((0,\infty))$ is constant in $(1,\infty)$ and nondecreasing in $\R_+$. It remains to prove that $j_{\iota}^{\prime\prime}(t)\le 0$ for $t<1$. We calculate
		\begin{align*}
			t^2j_{\iota}^{\prime\prime}(t)&=\iota (tj_{\iota}^\prime(t)-j_{\iota}(t))-(1-\iota)(t\omega_2^{\prime}(t)-\omega_2(t))\\
			&=-\iota(1-\iota)j_{\iota}(t)-(1-\iota)\omega_2^{\prime}(t)t+ (1-\iota)^2\omega_2(t)\le -(1-\iota)\omega_2^{\prime}(t)t.
		\end{align*} 
		The function $j_{\iota}$ also satisfies \eqref{eq:dini}. This follows from 
		\begin{align*}
			\int\limits_{0}^{1}\frac{j_{\iota}(t)}{t}\d t &= (1-\iota)\int\limits_{0}^{1}t^{\iota-1}\int\limits_{t}^{\infty}\frac{\min\{ \omega_2(r),\omega_2(1) \}}{r^{1+\iota}}\d r\d t= (1-\iota)\Big(\frac{\omega_2(1)}{\iota^2} + \int\limits_{0}^{1}t^{\iota-1}\int\limits_{t}^{1}\frac{ \omega_2(r)}{r^{1+\iota}}\d r\d t\Big)\\
			&=(1-\iota)\frac{\omega_2(1)}{\iota^2} + \frac{1-\iota}{\iota}\int\limits_{0}^{1} \frac{ \omega_2(r)}{r}\d r<\infty.
		\end{align*}
		Similarity, one proves that \eqref{eq:dini_2s} is satisfied by $j_{\iota}$. This allow us to define a new function $\omega_3$ which is slightly bigger than $\omega_2$, is still nondecreasing and concave, and additionally satisfies \eqref{eq:modulus_decreasing_iota}. Set $\omega_3:= \omega_2+j_{\iota}$. Then, for $t<1$
		\begin{align*}
			t\omega_3^{\prime}(t)= t\omega_2^{\prime}(t)+ tj_{\iota}^{\prime}(t)= \iota(\omega_2(t)+j_\iota(t))+ t\omega_2^{\prime}(t)-\omega_2(t)\le \iota\omega_3(t).
		\end{align*}
		Here, we used the concavity of $\omega_2$. Therefore, $\omega_3(t)/t^{\iota}$ is decreasing in $(0,1)$. \smallskip
		
		\textit{Step 3. (\eqref{eq:modulus_second_derivative})} We regularize $\omega_3$ by taking averaged means twice. Set $c=1/(1-3\iota)$. We define
		\begin{align*}
			\omega_4(t):= c\fint_{t/2}^t\fint_{r/2}^r \omega_3(l)\d l \d r.
		\end{align*}
		Then, we use the monotonicity of $\omega_3$ and \eqref{eq:modulus_decreasing_iota} to estimate $\omega_4(t)$ from below by $\omega_3(t)$, i.e.\
		\begin{align*}
			\omega_4(t)\ge c\omega_3(t/4)\ge c(\omega_3(t)-\omega_3^{\prime}(t/4)(3t/4)  )\ge c(\omega_3(t)-3\iota\omega_3(t/4))\ge c(1-3\iota)\omega_3(t)=\omega_3(t).
		\end{align*}
		A minor calculation reveals
		\begin{align*}
			\omega_4(t)= 4c\int\limits_{1/2}^1\int\limits_{1/2}^1 \omega_3(tlr)\d l \d r.
		\end{align*}
		From this representation, it is clear that $\omega_4$ inherits all of the previous properties of $\omega_3$. We differentiate $\omega_4$ 
		\begin{equation*}
			t\omega_4^{\prime}(t)=4c\int\limits_{1/2}^1\int\limits_{1/2}^1l\partial_l\omega_3(tlr)\d l \d r=4c\int\limits_{1/2}^1\Big( \omega_3(tr)-\frac{1}{2}\omega_3(tr/2)- \int\limits_{1/2}^1\omega_3(tlr)\d l  \Big)\d r.
		\end{equation*}
		Thus, the term $t\omega_4^{\prime}(t)+t^2\omega_4^{\prime\prime}(t)$ equals
		\begin{align*}
			t\partial_t t\omega_4^{\prime}(t)&=4c\int\limits_{1/2}^1\Big( r\partial_r\omega_3(tr)-\frac{r}{2}\partial_r\omega_3(tr/2)- \int\limits_{1/2}^1l\partial_l\omega_3(tlr)\d l  \Big)\d r\\
			&=4c\Bigg(\omega_3(t)-\frac{1}{2}\omega_3(t/2)-\int\limits_{1/2}^1 \omega_3(tr)\d r - \frac{1}{2}\omega_3(t/2)+\frac{1}{4}\omega_3(t/4)+\frac{1}{2}\int\limits_{1/2}^1 \omega_3(tr/2)\d r\\
			&\qquad -\int\limits_{1/2}^1\Big( \omega_3(tr)-\frac{1}{2}\omega_3(tr/2)-\int\limits_{1/2}^1 \omega_3(tlr)  \d l \Big)\d r\Bigg)\\
			&=4c\Bigg(\omega_3(t)-\frac{1}{2}\omega_3(t/2)-\int\limits_{1/2}^1 \omega_3(tr)\d r - \frac{1}{2}\omega_3(t/2)+\frac{1}{4}\omega_3(t/4)+\frac{1}{2}\int\limits_{1/2}^1 \omega_3(tr/2)\d r\Bigg)\\
			&\quad -t\omega_4^{\prime}(t)\\
			&\ge 4c\Big( -\frac{1}{4}\frac{t\omega_3^{\prime}(t/4)}{4}- \frac{\omega_3(t/2)-\omega_3(t/4)}{4}+\frac{3}{8}t\omega_3^{\prime}(t) \Big) -t\omega_4^{\prime}(t)\\
			&\ge -c\Big(\omega_3(t/2)-3/2 t\omega_3^{\prime}(t)  \Big) - t\omega_4^{\prime}(t).
		\end{align*}
		Now, we use $\omega_3(t/2)\le 1/c \omega_4(2t)\le 1/c (\omega_4(t)+\omega_4^{\prime}(t)t)$. This yields
		\begin{align*}
		t^2\omega_4^{\prime\prime}(t) \ge -c\Big(\omega_3(t/2)-3/2 t\omega_3^{\prime}(t)  \Big) - 2t\omega_4^{\prime}(t)\ge- \omega_4(t)-3 t\omega_4^{\prime}(t).
		\end{align*}		
		\textit{Step 4.} We set $\omega(t):=\omega_4(t)+t_{+}^{\iota/2}$ which is increasing.
	\end{proof}
	\begin{remark}\label{rem:replace_modulus}
		We apply \autoref{lem:modulus} to the modulus of continuity $\tilde{\omega}$ which yields a function $\omega$. Since $\omega\ge \tilde{\omega}$, we will assume without loss of generality that the modulus of continuity of the $\cdini$-domain (respectively $2s$-$\cdini$-domain) $D$ satisfies all properties of $\omega$. 
	\end{remark}
	
	Adapted to $D$, we pick a regularized distance function $\fd\in C^{0,1}(\R^d)\cap C^{2}_{\loc}((\partial D)^c)$ which satisfies 
	\begin{equation}\label{eq:reg_distance_properties}
		\begin{split}
			C_1^{-1} d_D(x)\le \fd(x)&\le C_1 d_D(x),\\
			\abs{\nabla \fd(x)-\nabla \fd(y)}&\le C_2 \omega(\abs{x-y}),\\
			\abs{D^2 \fd(x)}&\le C_3 \frac{\omega(d_D(x))}{d_D(x)}.
		\end{split}
	\end{equation}
	This exists by \cite{Lie85}. Recall that $d_D(x)= \distw{x}{D^c}$.\smallskip
	
	The next lemma is an essential technical estimate used in both \autoref{prop:almost_harmonic} and \autoref{prop:subsolution_d}. It is needed whenever \eqref{ass:pUB} is not available. In contrast, if \eqref{ass:pUB} holds, then we can use \cite[Lemma B.2.4]{RoFe24} instead.

	\begin{lemma}\label{lem:intermediate_estimate_1}
		Let $s_0\in (0,1)$, $a\in (0,1/4)$, and $D$ be a uniform $\cdini$-domain with a modulus of continuity $\omega$ as in \autoref{rem:replace_modulus} that satisfies all properties from \autoref{lem:modulus} with $\iota=\min\{1/6, s_0/4\}$ and a localization radius $\rho$. There exist $0<\rho_1<\rho$ and a constant $C= C(s_0, \omega, \rho, a)>0$ such that for any $x\in D$ with $d_D(x)<\rho_1/2$ and $\theta\in S^{d-1}$ we have
		\begin{align*}
			\int\limits_{ad_D(x)}^{\rho_1} &\big|\fd(x+r\theta)^s- (\fd(x)+\nabla \fd(x)\cdot r\theta)_+^s\big|\frac{\d r}{r^{1+2s}}\\
			&\qquad\le C \frac{\omega(f^{-1}(d_D(x)))}{d_D(x)^s} \begin{cases}
				\frac{(\omega(f^{-1}(d_D(x))))^{2s-1}- 1}{1-2s}&, s\ne 1/2 \\
				\ln\big( \frac{1}{\omega(f^{-1}(d_D(x)))} \big)&, s=1/2
			\end{cases}, 
		\end{align*}
		where $f(t) = t \omega(t)$.
	\end{lemma}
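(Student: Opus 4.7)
Set $\delta:=d_D(x)$, $t_*:=f^{-1}(\delta)$ (so $t_*\omega(t_*)=\delta$), and define $E(r):=\fd(x+r\theta)-\fd(x)-\nabla\fd(x)\cdot r\theta$, $a(r):=\fd(x+r\theta)$, $b(r):=\fd(x)+\nabla\fd(x)\cdot r\theta$. From \eqref{eq:reg_distance_properties} I obtain the Taylor-type bounds $|E(r)|\le C\int_0^r\omega(t)\,dt\le Cr\omega(r)$ (always valid) and $|E(r)|\le Cr^2\omega(\delta)/\delta$ for $r\le\delta/(2L)$ (with $L:=\|\nabla\fd\|_\infty$, using $|D^2\fd|\le C\omega(d_D)/d_D$ along the segment $[x,x+r\theta]\subset\{d_D\ge\delta/2\}$). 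Pointwise, the integrand is controlled by
\begin{equation*}
|a(r)^s-b(r)_+^s|\le\min\bigl(s\min(a,b_+)^{s-1}|E(r)|,\ |E(r)|^s\bigr),
\end{equation*}
using the mean value theorem when $\min(a,b_+)>0$ and the subadditivity of $t\mapsto t^s$ together with $|a-b_+|\le|a-b|$ otherwise.

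\textbf{Region split.} I decompose $[a\delta,\rho_1]$ at the two scales $\delta/(4L)$ and $t_*$. In \emph{Region I}, $r\in[a\delta,\delta/(4L)]$, both $a,b\ge\delta/2$, so combining MVT with the sharp second Taylor bound yields $|a^s-b^s|\le C\delta^{s-2}\omega(\delta)r^2$; integration against $r^{-1-2s}$ is uniformly bounded in $s\in[s_0,1)$ and produces a contribution $\lesssim\omega(\delta)/\delta^s$, absorbed by the RHS since $\omega(\delta)\le\omega(t_*)$ and $G(s,\omega(t_*)):=\int_{\omega(t_*)}^1 u^{2s-2}\,du\gtrsim 1$. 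In \emph{Region III}, $r\in[t_*,\rho_1]$, the subadditivity bound $(Cr\omega(r))^s$ reduces matters to $C\int_{t_*}^{\rho_1}\omega(r)^sr^{-1-s}\,dr$; a dyadic decomposition on $[2^kt_*,2^{k+1}t_*]$, together with $\omega(2^kt_*)\le 2^{k\iota}\omega(t_*)$ from \autoref{lem:modulus}, gives a geometric series of ratio $2^{-s(1-\iota)}$ summing to $\lesssim\omega(t_*)^{2s}/\delta^s$; a direct case analysis of $G(s,\omega(t_*))$ in each of the regimes $s>1/2$, $s=1/2$, $s<1/2$ shows $\omega(t_*)^{2s-1}\lesssim G(s,\omega(t_*))$, absorbing this region into the target. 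In \emph{Region II}, $r\in[\delta/(4L),t_*]$, the linearisation $b(r)$ may become small or negative for $\theta$ close to $-\nabla\fd(x)/|\nabla\fd(x)|$, while $|E(r)|\le C\delta$. Split further by the sign and size of $b(r)$: where $b(r)\ge\delta/2$, MVT with $\min=\delta/2$ behaves as in Region I; where $0<b(r)<\delta/2$, MVT with $\min=b(r)$ plus the substitution $u=b(r)$ converts the integral into one of the form $\int_0^{\delta/2}u^{s-1}\cdot(\text{uniformly bounded})\,du$, yielding $\lesssim\omega(t_*)^{2s}/\delta^s$; where $b(r)\le 0$, the identity $a=b+E\ge 0$ forces $a\le|E|\le Cr\omega(r)$, and the resulting contribution, integrated over the short remaining range, is $O(\omega(t_*)^{s+1}/\delta^s)$, negligible.

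\textbf{Main obstacle.} Region II is the delicate part: each of the three sub-cases requires a tailored estimate (two MVT regimes distinguished by the size of $b(r)$ plus a final subadditivity estimate), and constants must stay uniform in $s\in[s_0,1)$ across the three qualitatively different regimes $s<1/2$, $s=1/2$, $s>1/2$ that govern $G(s,\omega(t_*))$. The monotonicity of $\omega(r)/r^\iota$ provided by \autoref{lem:modulus} with $\iota=\min\{1/6,s_0/4\}$ is precisely what allows the dyadic geometric series in Region III, as well as the bounded factor in the integral of Region II, to close with $s$-uniform constants depending only on $s_0$, $\omega$, $\rho$ and $a$; this is why the modulus had to be regularised before this lemma could be invoked.
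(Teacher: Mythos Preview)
Your overall architecture (split at $\delta$-scale and at $t_*=f^{-1}(\delta)$; use the second–order Taylor bound near $x$, the subadditivity bound far from $x$, and the MVT bound in between) matches the paper's, and Regions I and III are handled correctly. The gap is in Region II.

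Your case split there is on the size of $b(r)=\fd(x)+\nabla\fd(x)\cdot r\theta$, but the MVT bound you invoke is $|a^s-b_+^s|\le s\,\min(a,b_+)^{s-1}|E|$, which is governed by $\min(a,b_+)$, not by $b$ alone. In particular, when $a(r)=\fd(x+r\theta)$ is small (the ray $r\mapsto x+r\theta$ grazes $\partial D$) while $b(r)\ge\delta/2$, your ``behaves as in Region I'' is not justified; and in the sub-case $0<b(r)<\delta/2$ you write ``MVT with $\min=b(r)$'', which is only valid when $a\ge b$. The situation $a(r)$ small is exactly what drives the main term, and you never control it. In the paper this is the contribution $\text{II}_2=\int a(r)^{s-1}\omega(r)r^{-2s}\,dr$, which takes up most of the proof: one shows, via the intercept theorem and the $\cdini$ bound on the local chart, that the segment $\{x+r\theta:\,a\delta\le r\le f^{-1}(a\delta)\}$ meets $\partial D$ at most once, and that near the hitting point $r_0$ one has $\fd(x+r\theta)\gtrsim\frac{r_0-r}{r_0}\,\delta$. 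Only with this lower bound does the singularity $(r_0-r)^{s-1}$ integrate to the target.

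Your fallback in the $b(r)\le 0$ sub-case is also too optimistic. For $\theta$ with $\nabla\fd(x)\cdot\theta$ close to $-|\nabla\fd(x)|$, the set $\{r:b(r)\le 0\}$ starts at $r_1\sim\delta$ and runs to $t_*$; the bound $a^s\le|E|^s\le (Cr\omega(r))^s$ then gives $\int_{r_1}^{t_*}\omega(r)^s r^{-1-s}\,dr\lesssim\omega(t_*)^s/\delta^s$, not $\omega(t_*)^{s+1}/\delta^s$ as you claim. For $s>1/2$ this is \emph{not} absorbed by the target $\omega(t_*)\,G(s,\omega(t_*))/\delta^s\sim\omega(t_*)/\delta^s$. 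This is precisely the ``weak'' estimate \eqref{eq:intermediate_worse_estimate} that the paper singles out as insufficient, and avoiding it is the whole point of the geometric analysis you are missing.
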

	Note that $f$ is invertible since $t\mapsto t\omega(t)$ is strictly increasing and $\lim_{t\to \infty}t\omega(t)=\infty$. \smallskip 
	 
	Before we proceed with the lengthy and technical proof of this estimate, we give a remark on its importance. 
	\begin{remark}
		Using the bound 
		\begin{equation*}
			\big|\fd(x+r\theta)^s- (\fd(x)+\nabla \fd(x)\cdot r\theta)_+^s\big|\le C \abs{r}^s \omega(r)^s
		\end{equation*}
		as in the proof of \cite[Proposition B.2.1 (i)]{RoFe24}, we get, quite easily, 
		\begin{equation}\label{eq:intermediate_worse_estimate}
			\int\limits_{ad_D(x)}^{\rho_1} \big|\fd(x+r\theta)^s- (\fd(x)+\nabla \fd(x)\cdot r\theta)_+^s\big|\frac{\d r}{r^{1+2s}}\le C \frac{\omega(d_D(x))^s}{d_D(x)^s}
		\end{equation}
		which is a weaker estimate than the one stated in \autoref{lem:intermediate_estimate_1}. In particular, using only the bound \eqref{eq:intermediate_worse_estimate} instead of \autoref{lem:intermediate_estimate_1} would require 
		\begin{equation}\label{eq:bad_s_dini}
			\int\limits_{0}^1 \frac{\omega(t)^s}{t}\d t<\infty
		\end{equation}
		in order to construct appropriate barriers in \autoref{prop:subharmonic_barrier_without_pUB} and \autoref{prop:supharmonic_barrier_without_pUB}. But the assumption \eqref{eq:bad_s_dini} on the modulus of continuity $\omega$ would be much stronger than the Dini-continuity \eqref{eq:dini}, e.g.\ consider $\omega(t)=1/\ln(1/t)^{(1+s)/(2s)}$. In \cite{RoFe24}, this is no obstruction as they deal with $C^{1,\alpha}$-domains those modulus of continuity can be chosen as $\omega(t)=c\, t^\alpha$. 
	\end{remark}
	\begin{proof}[Proof of \autoref{lem:intermediate_estimate_1}]
		Since the problem is translation and rotation invariant, we assume the point $x=(x',x_d)\in D$ in the domain $D$ is of the form $x= (0,x_d)$ and that $z=0\in \partial D$ is the minimizer of $x$ to the boundary $\partial D$. Note that $d_D(x)$ simplifies to $x_d$. Since $D$ is locally the epigraph of a $\cdini$-function, there exists $\phi$ such that $D\cap B_\rho(0)= \{ (y',y_d)\in B_\rho(0)\mid y_d>\phi(y') \}$ and $\phi(0)=0=\abs{\nabla\phi(0)}$. The function $\phi$ satisfies 
		\begin{equation}\label{eq:phi_properties}
			\begin{split}
				\abs{\phi(y')}&\le \abs{\phi(y')-\phi(0)-y'\cdot \nabla\phi(0)}\le \abs{y'}\omega(\abs{y'}),\\
				\abs{\nabla\phi(y')}&= \abs{\nabla \phi(y')-\nabla \phi(0)}\le \omega(\abs{y'})
			\end{split}
		\end{equation}
		for any $y'\in \R^{d-1}$. Now, fix any $\theta \in S^{d-1}$. We divide the integration domain into two parts
		\begin{align*}
			\Big(\int\limits_{f^{-1}(ax_d)}^{\rho_1}+	\int\limits_{ax_d}^{f^{-1}(ax_d)}\Big) \big|\fd(x+r\theta)^s- (\fd(x)+\nabla \fd(x)\cdot r\theta)_+^s\big|\frac{\d r}{r^{1+2s}}:= \I{}+\II{}.
		\end{align*}
		To estimate $\I{}$, we use the bound 
		\begin{equation}\label{eq:taylor_distance}
			|\fd(x+y)-(\fd(x)+\nabla \fd(x)\cdot y)_+| \le \omega(\abs{y})\abs{y},
		\end{equation}
		see \cite[Lemma B.2.3.]{RoFe24}. This and the concavity of $(\cdot)^s$ yield
		\begin{align*}
			\I{}\le \int\limits_{f^{-1}(ax_d)}^{\rho_1}\omega(r)^s\frac{\d r}{r^{1+s}}\le \frac{\omega(f^{-1}(ax_d))^s}{f^{-1}(ax_d)^{s\iota}}\int\limits_{f^{-1}(ax_d)}^{\infty}\frac{\d r}{r^{1+s(1-\iota)}}= \frac{1}{s(1-\iota)}\frac{\omega(f^{-1}(ax_d))^s}{f^{-1}(ax_d)^{s}}.
		\end{align*}
		Here, we used \eqref{eq:modulus_decreasing_iota}. Since $f(t)=t \omega(t)$, we know 
		\begin{equation}\label{eq:property_of_f}
			t= f^{-1}(t)\,\omega(f^{-1}(t)).
		\end{equation} 
		Because of this and since $\iota<1/3$, the estimate of  $\I{}$ reads
		\begin{align*}
			\I{}\le \frac{3}{2s}\frac{\omega(f^{-1}(ax_d))^{2s}}{a^s \,x_d^{s}}\le \frac{3}{a2s} \frac{\omega(f^{-1}(x_d))^{2s}}{x_d^s}.
		\end{align*}
		Here, we used the monotonicity of both $\omega$ and $f^{-1}$. We notice that this implies the desired estimate if we assume $\rho_1$ to be so small such that $\omega(f^{-1}(x_d))\le \min\{1/e^2, s/2\}$. If $s= 1/2$, then this is obvious since $\ln(1/\omega(f^{-1}(x_d)))\ge \ln(e^2)=2$. If $s\ne 1/2$, then it is due to 
		\begin{equation}\label{eq:lem_techincal_estimate_final_bound_small_s}
			a^{2s-1}\le \frac{a^{2s-1}-1}{1-2s}
		\end{equation}
		for any $a\le s/2\le (2s)^{1/(1-2s)}$. 
	
		Now, we turn our attention to the more delicate term $\II{}$. The concavity of $(\cdot)^s$ and \eqref{eq:taylor_distance} yield
		\begin{align*}
			\II{}\le s 	\int\limits_{ax_d}^{f^{-1}(ax_d)} \big((\fd(x)+\nabla \fd(x)\cdot r\theta)_+^{s-1}+ \fd(x+r\theta)^{s-1}\big)\frac{\omega(r)}{r^{2s}}\d r= \II{1}+\II{2}.
		\end{align*}
		Here, we use the convention $(0)_+^{s-1}=0$. We begin by estimating the term $\II{1}$. 
		
		\textbf{Estimate of $\II{1}$.}	We distinguish two cases. 
		
		\textit{Case 1.} If $\nabla \fd(x)\cdot \theta \ge 0$, then for any $r>0$
		\begin{equation*}
			(\fd(x)+\nabla \fd(x)\cdot r\theta)_+^{s-1}\le \fd(x)^{s-1}\le C_1^{1-s}\, x_d^{s-1}.
		\end{equation*}
		This observation yields
		\begin{align*}
			\II{1}&\le sC_1^{1-s}x_d^{s-1} 	\int\limits_{ax_d}^{f^{-1}(ax_d)} \frac{\omega(r)}{r^{2s}}\d r.\\
			\intertext{After scaling $r$ by $a x_d$ and using the monotonicity of $\omega$, this equals }
			& sC_1^{1-s}x_d^{-s} a^{1-2s}	\int\limits_{1}^{\frac{f^{-1}(ax_d)}{a x_d}} \frac{\omega(a x_d r)}{r^{2s}}\d r\le s\frac{C_1}{a}\frac{1-a}{a} \frac{\omega(f^{-1}(ax_d)) 	}{x_d^{s}}\int\limits_{1}^{\frac{1}{\omega(f^{-1}(ax_d)}} \frac{1}{r^{2s}}\d r.
		\end{align*}
		In the last inequality, we used \eqref{eq:property_of_f} to rewrite the upper boundary of the integration domain. This is the desired estimate for the term $\II{1}$ in the case $\nabla \fd(x)\cdot \theta \ge 0$. \smallskip
		
		\textit{Case 2.} If $\nabla\fd(x)\cdot \theta<0$, then the integrand of $\II{1}$ is only nontrivial if $r< \fd(x)/ \abs{\nabla \fd(x)\cdot \theta}$. In this case, we estimate $\II{1}$ as follows
		\begin{align*}
			\II{1}&\le s \int\limits_{ax_d}^{f^{-1}(ax_d)\wedge \frac{\fd(x)}{\abs{\nabla \fd(x)\cdot \theta}}} \big( \fd(x)-r \abs{\nabla \fd(x)\cdot \theta} \big)^{s-1} \frac{\omega(r)}{r^{2s}}\d r= \II{1,1}+\II{1,2}.
		\end{align*}
		Here, the term $\II{1,1}$ is defined as
		\begin{align*}
			\II{1,1}&:= s \int\limits_{ax_d}^{f^{-1}(ax_d)/2\wedge \frac{\fd(x)}{2\abs{\nabla \fd(x)\cdot \theta}}} \big( \fd(x)-r \abs{\nabla \fd(x)\cdot \theta} \big)^{s-1} \frac{\omega(r)}{r^{2s}}\d r\\
			&\le s\fd(x)^{s-1}2^{1-s} \int\limits_{ax_d}^{f^{-1}(ax_d)/2\wedge \frac{\fd(x)}{2\abs{\nabla \fd(x)\cdot \theta}}} \frac{\omega(r)}{r^{2s}}\d r \le s\frac{(2C_1)^{1-s}}{a^{2s-1}} \frac{\omega(f^{-1}(ax_d))}{x_d^{s}}\int\limits_{1}^{\frac{1}{\omega(f^{-1}(ax_d))}} \frac{1}{r^{2s}}\d r.
		\end{align*}
		As in the bound for the term $\II{1}$ in the case 1, this is the desired estimate. Furthermore, the term $\II{1,2}$ equals
		\begin{equation}\label{eq:estimate_II_12}
			\begin{split}
				s &\int\limits_{f^{-1}(ax_d)/2\wedge \frac{\fd(x)}{2\abs{\nabla \fd(x)\cdot \theta}}}^{f^{-1}(ax_d)\wedge \frac{\fd(x)}{\abs{\nabla \fd(x)\cdot \theta}}} \big( \fd(x)-r \abs{\nabla \fd(x)\cdot \theta} \big)^{s-1} \frac{\omega(r)}{r^{2s}}\d r\le  \frac{\omega\big(f^{-1}(ax_d)\wedge \frac{\fd(x)}{\abs{\nabla \fd(x)\cdot \theta}}\big)}{\big(f^{-1}(ax_d)/2\wedge \frac{\fd(x)}{2\abs{\nabla \fd(x)\cdot \theta}}\big)^{2s}}  \\
				&\times \frac{\Big(  \big(\fd(x)-\frac{f^{-1}(ax_d)\abs{\nabla \fd(x)\cdot \theta}}{2}\big)\vee \frac{\fd(x)}{2} \Big)^{s}-\Big(  \big(\fd(x)-f^{-1}(ax_d)\abs{\nabla \fd(x)\cdot \theta}\big)\vee 0 \Big)^{s} }{\abs{\nabla \fd(x)\cdot \theta}}.
			\end{split}
		\end{equation}
		In the previous estimate, we used that $\omega$ is increasing, $r\mapsto r^{-2s}$ is decreasing, bounded them using the integration domain, and integrated the remaining term. We distinguish two more cases. \smallskip
		
		\textit{Sub-case 2.1.} If $\fd(x)/ \abs{\nabla \fd(x)\cdot \theta}\le f^{-1}(ax_d)$, then the previous bound reduces to 
		\begin{equation}\label{eq:help_bound_II_1_2}
			\II{1,2}\le \omega(\frac{\fd(x)}{\abs{\nabla \fd(x)\cdot \theta}})2^{1-s} \big( \frac{\fd(x)}{\abs{\nabla \fd(x)\cdot \theta}} \big)^{1-2s}\fd(x)^{s-1}.
		\end{equation}
		Note that $a x_d\le \fd(x)/\abs{\nabla \fd(x)\cdot \theta}\le f^{-1}(ax_d)$ since otherwise the term $\II{1,2}$ would not yield a contribution to $\II{1}$. 
		
		If $s\le 1/2$, then $r\mapsto \omega(r)\,r^{1-2s}$ is increasing and, thus, using the above bound and \eqref{eq:property_of_f} yield 
		\begin{align*}
			\II{1,2}&\le  2C_1 x_d^{s-1}  \omega(f^{-1}(ax_d)) \big( f^{-1}(ax_d) \big)^{1-2s}\\
			&= 2C_1 \frac{\omega(f^{-1}(ax_d))^{2s}}{x_d^{s}}  \Big(\frac{\omega(f^{-1}(ax_d)) f^{-1}(ax_d)}{x_d} \Big)^{1-2s}=2C_1 \frac{\omega(f^{-1}(ax_d))^{2s}}{x_d^s}.
		\end{align*} 
		By \eqref{eq:lem_techincal_estimate_final_bound_small_s}, the desired bound follows. 
		
		If $s>1/2$, then using \eqref{eq:help_bound_II_1_2} we find
		\begin{align*}
			\II{1,2}\le \omega(f^{-1}(ax_d))2^{1-s} \big( ax_d \big)^{1-2s}\fd(x)^{s-1}\le C_1 2^{1-s} (a\wedge 1)^{-1}\frac{\omega(f^{-1}(x_d)) }{x_d^{s}}.
		\end{align*}
		This is the desired estimate for $\II{1,2}$ in the case  $\fd(x)/ \abs{\nabla \fd(x)\cdot \theta}\le f^{-1}(ax_d)$. \smallskip
		
		\textit{Sub-case 2.2.} If $\fd(x)/\abs{\nabla \fd(x)\cdot \theta}>f^{-1}(a x_d)$, then we bound $\II{1,2}$ using \eqref{eq:estimate_II_12} by 
		\begin{align*}
			\frac{\omega\big(f^{-1}(ax_d)\big)}{\big(f^{-1}(ax_d)/2\big)^{2s}}  \frac{  \big(\fd(x)-\frac{f^{-1}(ax_d)\abs{\nabla \fd(x)\cdot \theta}}{2}\big)^{s}-  \big(\fd(x)-f^{-1}(ax_d)\abs{\nabla \fd(x)\cdot \theta}\big)^{s} }{\abs{\nabla \fd(x)\cdot \theta}}.
		\end{align*} 
		\textit{Sub-case 2.2.1.} If additionally $2f^{-1}(a x_d)< \fd(x)/\abs{\nabla \fd(x)\cdot \theta}$, then $\II{1,2}$ is smaller than 
		\begin{align*}
			&s\frac{\omega\big(f^{-1}(ax_d)\big)}{\big(f^{-1}(ax_d)/2\big)^{2s}}  \frac{ \big(\fd(x)-f^{-1}(ax_d)\abs{\nabla \fd(x)\cdot \theta}\big)^{s-1} f^{-1}(ax_d)\abs{\nabla \fd(x)\cdot \theta} }{2\abs{\nabla \fd(x)\cdot \theta}}\\
			&\quad\le s\frac{\omega\big(f^{-1}(ax_d)\big)}{\big(f^{-1}(ax_d)/2\big)^{2s}}  2^{-s}\fd(x)^{s-1} f^{-1}(ax_d)\le sa^{1-2s} 2^s C_1^{1-s} \frac{\omega(f^{-1}(x_d))^{2s}}{x_d^{s}}.
		\end{align*}
		Here, we used \eqref{eq:estimate_II_12}, \eqref{eq:property_of_f}, \eqref{eq:reg_distance_properties}, and $t^s-r^s\le sr^{s-1}(t-r)$, i.e.\ the concavity of $(\cdot)^s$. 
		
		\textit{Sub-case 2.2.2.} If, additionally, $2f^{-1}(a x_d)\ge  \fd(x)/\abs{\nabla \fd(x)\cdot \theta}$, then $\II{1,2}$ is smaller than 
		\begin{align*}
			\frac{\omega\big(f^{-1}(ax_d)/2\big)}{\big(f^{-1}(ax_d)/2\big)^{2s}} & \frac{  \fd(x)^{s}}{\abs{\nabla \fd(x)\cdot \theta}}\le \frac{\omega\big(f^{-1}(ax_d)/2\big)}{\big(f^{-1}(ax_d)/2\big)^{2s}}    \fd(x)^{s-1} 2f^{-1}(ax_d)\le 2^{1+2s}  \frac{a C_1^{1-s} x_d^s}{f^{-1}(ax_d)^{2s}}\\
			&= 2^{1+2s} a^{1-2s} C_1^{1-s} \frac{\omega(f^{-1}(ax_d))^{2s}}{x_d^s}\le 2^{1+2s} a^{1-2s} C_1^{1-s} \frac{\omega(f^{-1}(x_d))^{2s}}{x_d^s}.
		\end{align*}
		Here, we used \eqref{eq:property_of_f}, the bound on the regularized distance \eqref{eq:reg_distance_properties}, and the monotonicity of $\omega$ and $f^{-1}$. This proves the desired bound on the term $\II{1,2}$ in all cases. \smallskip
		
		We successfully estimated the term $\II{1}$. Now, we consider the term $\II{2}$. \medskip
		
		\textbf{Estimate of $\II{2}$.} We distinguish two cases depending on the angle $\theta$
		
		\textit{Case 1.} The line segment $\{x+r\theta\mid r\in (0,f^{-1}(ax_d))\}$ hits the boundary $\partial D$. 
		
		We call the smallest $r$ such that $x+r\theta \in \partial D$ simply $r_0\in [x_d, f^{-1}(ax_d)]$. Now, we prove a three intermediate results. \smallskip
		
		\textit{Claim a.} There exists a global constant $c_1>0$ such that the distance to the boundary of $D$ is bounded from below by a fraction of the vertical distance, i.e.\ $\distw{y}{\partial D}\ge c_1 \abs{y_d-\phi(y')}$.
		
		The claim a is standard. We provide the proof for self-containment. We estimate
		\begin{align*}
			\abs{y_d-\phi(y')}&\le \inf\limits_{\tilde{y}'} \abs{y_d-\phi(\tilde{y}')}+\abs{\phi(y')-\phi(\tilde{y}')}\\
			&\le \sqrt{2}(1+[\phi]_{C^{0,1}})\inf\limits_{\tilde{y}'} \norm{y-(\tilde{y}', \phi(\tilde{y}'))}= \sqrt{2}(1+[\phi]_{C^{0,1}})d_y.
		\end{align*}
		This proves claim a.\smallskip

		Let $p:= x+r_0\theta\in \partial D$, $q_r: = ( x'+r\theta', \phi(x'+r\theta') )$, and let $z_r$ be the intersection point of the line going through $p$ and $q_r$ as well as the line going through $0$ and $x$ for all $r\in (0,r_0)$. That is $z_r:= p+\frac{r_0}{r_0-r}(q_r-p)$, see \autoref{fig:geometry_1}.
		
		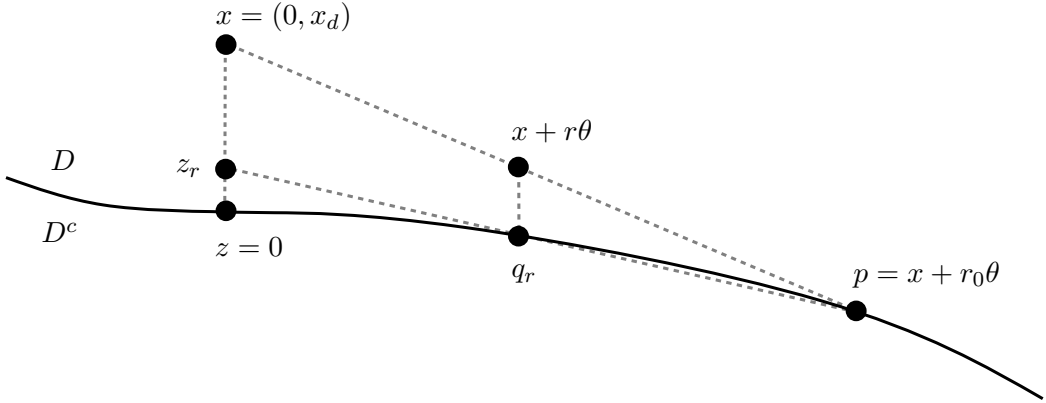
\begin{figure}[!ht]
			\centering
			\begin{tikzpicture}[y=1cm, x=1cm, yscale=1.2,xscale=1.2, inner sep=0pt, outer sep=0pt]
				\begin{scope}[shift={(-0.11101000000000001,6.07435)}]
					\path[draw=gray,dash pattern=on 0.071cm off 0.071cm,even odd
					rule,draw opacity=1,line join=bevel,line width=0.04cm,dash phase=1.938cm]
					(13.7307,12.4223) -- (13.7336,11.6733);

					\path[draw=gray,dash pattern=on 0.071cm off 0.071cm,even odd rule,draw
					opacity=1,line join=bevel,line width=0.04cm,dash phase=1.995cm]
					(17.4410,10.8242) -- (10.4972,12.4261);

					\path[draw=gray,dash pattern=on 0.069cm off 0.069cm,even odd rule,draw
					opacity=1,line join=bevel,line width=0.04cm] (10.5600,13.7784) -- (17.4986,10.8126);

					\path[draw=gray,dash pattern=on 0.061cm off 0.061cm,even odd
					rule,draw opacity=1,line join=bevel,line width=0.04cm,dash phase=1.714cm]
					(10.5093,11.9623) -- (10.5151,13.8787);
					
				\end{scope}
				\path[draw=black,even odd rule,draw opacity=1,line join=bevel,line
				width=0.04cm] (8.0007,18.3867) .. controls (8.3354, 18.2667) and (8.6701,
				18.1467) .. (9.0691, 18.0840) .. controls (9.4681, 18.0213) and (10.0239,
				18.0148) .. (10.2485, 18.0108) .. controls (10.4731, 18.0067) and (10.5903,
				18.0010) .. (10.9387, 17.9986) .. controls (11.2871, 17.9961) and (11.8861,
				17.9959) .. (13.0136, 17.8381) .. controls (14.1411, 17.6802) and (15.7971,
				17.3648) .. (16.9435, 17.0228) .. controls (18.0898, 16.6807) and (18.7264,
				16.3120) .. (19.3630, 15.9433);

				\path[draw=black,fill=black,line join=bevel,line width=0.03cm] (10.4123,
				19.8564) ellipse (0.1021cm and 0.1015cm);

				\path[shift={(-0.11101000000000001,6.07435)},draw=black,fill=black,line
				join=bevel,line width=0.03cm] (10.5230, 11.9453) ellipse (0.1021cm and
				0.1015cm);

				\path[draw=black,fill=black,line join=bevel,line width=0.03cm] (17.3180,
				16.9136) ellipse (0.1021cm and 0.1015cm);

				\path[draw=black,fill=black,line join=bevel,line width=0.03cm] (13.6156,
				18.5033) ellipse (0.1021cm and 0.1015cm);

				\path[draw=black,fill=black,line join=bevel,line width=0.03cm] (13.6170,
				17.7391) ellipse (0.1021cm and 0.1015cm);

				\path[draw=black,fill=black,line join=bevel,line width=0.03cm] (10.4074,
				18.4807) ellipse (0.1021cm and 0.1015cm);

				\path[shift={(-0.11101000000000001,6.07435)},fill=black,even odd rule,line
				join=bevel,line width=0.081cm] (10.4,11.45) node[above right]
				(text1){$z=0$};

				\path[shift={(-0.11101000000000001,6.07435)},fill=black,dash pattern=on 0.061cm
				off 0.061cm,even odd rule,draw opacity=1,line join=bevel,line
				width=0.061cm,dash phase=1.714cm] (8.6,12.4) node[above right]
				(text3){$D$};

				\path[shift={(-0.11101000000000001,6.07435)},fill=black,dash pattern=on 0.061cm
				off 0.061cm,even odd rule,draw opacity=1,line join=bevel,line
				width=0.061cm,dash phase=1.714cm] (8.5,11.6) node[above right]
				(text4){$D^c$};

				\path[shift={(-0.11101000000000001,6.07435)},fill=black,dash pattern=on 0.284cm
				off 0.142cm on 0.071cm off 0.142cm,even odd rule,draw opacity=1,line
				join=bevel,line width=0.071cm] (13.65,12.7) node[above right]
				(text10){$x+r\theta$};

				\path[shift={(-0.11101000000000001,6.07435)},fill=black,dash pattern=on 0.284cm
				off 0.142cm on 0.071cm off 0.142cm,even odd rule,draw opacity=1,line
				join=bevel,line width=0.071cm] (17.4,11.1) node[above right]
				(text11){$p=x+r_0\theta$};

				\path[shift={(-0.11101000000000001,6.07435)},fill=black,dash pattern=on 0.284cm
				off 0.142cm on 0.071cm off 0.142cm,even odd rule,draw opacity=1,line
				join=bevel,line width=0.071cm] (13.65,11.15) node[above right]
				(text12){$q_r$};

				\path[fill=black,dash pattern=on 0.061cm off 0.061cm,even odd rule,draw
				opacity=1,line join=bevel,line width=0.061cm,dash phase=1.714cm]
				(10.3,20) node[above right] (text2){$x=(0,x_d)$};

				\path[fill=black,dash pattern=on 0.071cm off 0.071cm,even odd rule,draw
				opacity=1,line join=bevel,line width=0.071cm] (9.86,18.4) node[above
				right] (text13){$z_r$};
				
			\end{tikzpicture}
			\caption{Geometry close to $\partial D$ in case 1. }
			\label{fig:geometry_1}
		\end{figure}
		
		\textit{Claim b.} There exists a constant $c_2>0$ such that $\abs{z_r-0}\le 1/2\, \abs{0-x}$. \smallskip
		
		Now, we prove claim b. By \eqref{eq:phi_properties} and the monotonicity of $\omega$, we estimate
		\begin{align*}
			\abs{(z_r)_d}&= \big|\frac{r_0\phi(r\theta')- r\cdot \phi(r_0\theta')}{r_0-r}\big|\le \abs{\phi(r\theta')}+|\frac{r}{r_0-r}\int\limits_{r_0}^{r}\theta'\cdot \nabla \phi(t\theta')\d t|\\
			&\le \phi(r\theta')+r \sup_{r<t<r_0}\abs{\nabla \phi(t\theta')}\le r \omega(r)+ r \sup\limits_{r<t<r_0}\omega(t\abs{\theta'})\le 2r_0\omega(r_0)=2 f(r_0).
		\end{align*}
		By $r_0\le f^{-1}(ax_d)$ and the previous bound on $(z_r)_d$, we find 
		\begin{align*}
			\abs{z_r-0}= \abs{(z_r)_d}\le 2 f(f^{-1}(ax_d))\le  \frac{x_d}{2}= \frac{\abs{0-x}}{2}.
		\end{align*}
		Here, we used $a<1/4$. This proves the claim b. \smallskip
		
		\textit{Claim c.} The line segment $\{x+r\theta\mid r\in (0,f^{-1}(ax_d))\}$ hits $\partial D$ at most once.\smallskip
		
		Assume the claim is false. Then by intermediate value theorem, there exists a point $\xi=(t\theta', \phi(t\theta'))\in \partial D \cap \big(B_{f^{-1}(ax_d)}\setminus B_{x_d/2}\big)$ such that the line $\{ x+r\theta\mid r>0\}$ is tangent to the surface $\partial D$ at $\xi$. Thus, $\theta'\cdot\nabla \phi(\xi')= \theta_d$ which implies
		\begin{align*}
			\abs{\theta_d}= \abs{\theta'\cdot\nabla \phi(\xi')}\le \abs{\theta'}\omega(\abs{\xi'})\le \omega(f^{-1}(ax_d)). 
		\end{align*}
		Using this bound on the angle $\theta$, we can bound the distance of the line segment $\{x+r\theta\mid 0<r<f^{-1}(ax_d)\}$ from $\partial D$ as follows
		\begin{equation}\label{eq:claim_c_final_step}
			\begin{split}
				\distw{x+r\theta}{\partial D}&\ge c_1 \abs{x_d+r\theta_d-\phi(r\theta')}\ge c_1 \Big(x_d - r \abs{\theta_d}-\abs{\phi(r\theta')}\Big)\\
				&\ge c_1 \Big(x_d - f^{-1}(a x_d)\omega(f^{-1}(ax_d)) - r \omega(r)\Big)\ge c_1 \big(1- 2a\big)x_d.
			\end{split}
		\end{equation}
		This is a contradiction! Thus, claim c is proven. \medskip
		
		Now, we proceed to estimate the term $\II{2}$ in the case 1. By the intercept theorem, we find
		\begin{align*}
			\frac{\abs{x+r\theta- q_r}}{\abs{z_r-x}}= \frac{\abs{p-(x+r\theta)}}{\abs{p-x}}
		\end{align*}
		which implies, using claim a, claim b, and \eqref{eq:reg_distance_properties},
		\begin{align*}
			\fd(x+r\theta)&\ge C_1^{-1}\distw{x+r\theta}{\partial D}\ge \frac{c_1 }{C_1}\abs{x+r\theta- q_r}= \frac{c_1 }{C_1}\frac{\abs{p-(x+r\theta)}}{\abs{p-x}}\abs{z_r-x}\\
			&\ge \frac{c_1 }{C_1}\frac{(r_0-r)}{r_0}(\abs{0-x}-\abs{z_r-0})\ge\frac{c_1 }{2C_1}\frac{r_0-r}{r_0} x_d.
		\end{align*}
		Due to claim c, the domain of integration in the term $\II{2}$ reduces to $[ax_d,r_0]$. This yields 
		\begin{align*}
			\II{2}=&s\int\limits_{ax_d}^{r_0} \fd(x+r\theta)^{s-1}\frac{\omega(r)}{r^{2s}}\d r\le \omega(r_0)s \big(\frac{2C_1}{c_1}\big)^{1-s}x_d^{s-1}\int\limits_{ax_d}^{r_0} \frac{(1-r/r_0)^{s-1}}{r^{2s}}\d r\\
			&\le s \big(\frac{2C_1}{c_1}\big)^{1-s}x_d^{s-1}\omega(r_0)\Big(2^{1-s}\int\limits_{ax_d}^{r_0/2} \frac{1}{r^{2s}}\d r+\int\limits_{r_0/2}^{r_0} (1-r/r_0)^{s-1}\frac{2^{2s}}{r_0^{2s}}\d r\Big).
		\end{align*}
		Here, we used the monotonicity of $\omega$. In the first integral on the right-hand side of the previous estimate we scale $r$ by $ax_d$, use $r_0\le f^{-1}(ax_d)$, and \eqref{eq:property_of_f}. In the second integral, we simply integrate. This yields
		\begin{align*}
			\II{2}&\le s \big(\frac{2C_1}{c_1}\big)^{1-s}x_d^{s-1}\omega(r_0)\Big(2^{1-s}(ax_d)^{1-2s}\int\limits_{1}^{\frac{f^{-1}(ax_d)}{2ax_d}} \frac{1}{r^{2s}}\d r+r_0^{1-2s}\frac{2^{s}}{s} \Big)\\
			&\le  \frac{4C_1}{c_1}\Big(s a^{1-2s}x_d^{-s}\omega(f^{-1}(ax_d))\int\limits_{1}^{\frac{1}{2\omega(f^{-1}(ax_d))}} \frac{1}{r^{2s}}\d r+x_d^{s-1}\omega(r_0)r_0^{1-2s} \Big).
		\end{align*}
		The first term can be estimated as claimed in the lemma. We estimate the second term just as the term $\II{1,2}$ in the sub-case 2.1. Here, we use  $ax_d\le r_0\le f^{-1}(ax_d)$ and \eqref{eq:property_of_f}. \smallskip
		
		\textbf{Case 2.} The ray $\{x+r\theta\mid r\in (0,f^{-1}(ax_d))\}$ does not hit the boundary $\partial D$.  \smallskip
		
		We make a few more distinctions. \smallskip
		
		\textbf{Sub-case 2.1.} If $\theta_d\ge 0$, then just as in \eqref{eq:claim_c_final_step}
		\begin{equation*}
			\distw{x+r\theta}{\partial D}\ge c_1 \big(x_d+r\theta_d- \abs{\phi(r\theta')}\big)\ge c_1 \big(1-a\big)x_d.
		\end{equation*}
		This yields
		\begin{align*}
			\II{2}&\le c_1^{s-1}C_1^{1-s}(1-a)^{s-1}\int\limits_{ax_d}^{f^{-1}(ax_d)}x_d^{s-1}\frac{\omega(r)}{r^{2s}}\d r \le  \frac{4C_1}{3c_1}  a^{1-2s} \frac{\omega(f^{-1}(ax_d))}{x_d^{s}} \int\limits_{1}^{\frac{1}{\omega(f^{-1}(ax_d))}} \frac{\d r}{r^{2s}}
		\end{align*}
		which implies the desired estimate as in the bound for the term $\II{1}$ in the case 1. Here, we used \eqref{eq:property_of_f}. \smallskip
		
		\textbf{Sub-case 2.2.} We assume $\theta_d<0$. In this case, where $\tilde{\theta}$ is chosen such that $\tilde{\theta}'$ and $\theta'$ are linear dependent and $\tilde{\theta}_d<\theta_d$ such that $x+f^{-1}(ax_d)\tilde{\theta}\in \partial D$, see \autoref{fig:subcase23}. 
		
		\begin{figure}[!ht]
			\centering
			\begin{tikzpicture}[y=1cm, x=1cm, yscale=1,xscale=1, inner sep=0pt, outer sep=0pt]
				
				\draw[draw=gray,dash pattern=on 0.061cm off 0.061cm,even odd
				rule,draw opacity=1,line join=bevel,line width=0.04cm,dash phase=1.714cm] (10.4,18.5) -- (17.7,16) node[color=black, pos=0.5, above, rotate=-17, xshift=2, yshift=3] (text1){$f^{-1}(ax_d)$};
				
				\draw[draw=gray,dash pattern=on 0.061cm off 0.061cm,even odd
				rule,draw opacity=1,line join=bevel,line width=0.04cm,dash phase=1.714cm] (10.4,18.5) -- (17.2,14.95);
				
				\draw[draw=gray,dash pattern=on 0.061cm off 0.061cm,even odd
				rule,draw opacity=1,line join=bevel,line width=0.04cm,dash phase=1.714cm] (10.4123,17.8519) +(-26.5:7.48) arc (-26.5:7:7.48);
				
				\draw[draw=gray,dash pattern=on 0.061cm off 0.061cm,even odd
				rule,draw opacity=1,line join=bevel,line width=0.04cm,dash phase=1.714cm] (10.4,18.5) -- (10.4,
				16.0152);
				
				\path[shift={(0,-2.0045200000000003)},draw=black,even odd rule,draw
				opacity=0.993,line join=bevel,line width=0.071cm] (8.0007,18.3867) .. controls
				(8.3354, 18.2667) and (8.6701, 18.1467) .. (9.0691, 18.0840) .. controls
				(9.4681, 18.0213) and (10.0239, 18.0148) .. (10.2485, 18.0108) .. controls
				(10.4731, 18.0067) and (10.5903, 18.0010) .. (10.9387, 17.9986) .. controls
				(11.2871, 17.9961) and (11.8861, 17.9959) .. (13.0163, 17.8148) .. controls
				(14.1464, 17.6337) and (15.8077, 17.2717) .. (16.7729, 17.0338) .. controls
				(17.7380, 16.7960) and (18.0070, 16.6822) .. (18.2760, 16.5684);

				\path[draw=black,fill=black,line join=bevel,line width=0.03cm] (10.4,18.5) ellipse (0.1021cm and 0.1015cm) node[above, xshift=0.8cm, yshift=0.1cm] (text1){$x=(0,x_d)$};

				\path[draw=black,fill=black,line join=bevel,line width=0.03cm] (10.4,
				16.0152) ellipse (0.1021cm and 0.1015cm) node[below, xshift=10, yshift=-5] (text1){$z=0$};

				\path[draw=black,fill=black,line join=bevel,line width=0.03cm] (17.29,
				14.90) ellipse (0.1021cm and 0.1015cm) node[above right, xshift=0.3cm] (text1){$x+f^{-1}(ax_d)\tilde{\theta}$};

				\path[draw=black,fill=black,line join=bevel,line width=0.03cm] (17.7,16) ellipse (0.1021cm and 0.1015cm) node[above right, xshift=0.3cm] (text1){$x+f^{-1}(ax_d)\theta$};

				\path[fill=black,dash pattern=on 0.061cm off 0.061cm,even odd rule,draw
				opacity=0.993,line join=bevel,line width=0.061cm,dash phase=1.714cm]
				(8.5,16.5) node[above right] (text3){$D$};

				\path[fill=black,dash pattern=on 0.061cm off 0.061cm,even odd rule,draw
				opacity=0.993,line join=bevel,line width=0.061cm,dash phase=1.714cm]
				(8.4,15.5) node[above right] (text4){$D^c$};
				
			\end{tikzpicture}
			\caption{Geometry near $\partial D$ in sub-case 2.2.}\label{fig:subcase23}
		\end{figure}
		
		\textit{Claim d.} The vertical distance of $x+r\theta$ to the boundary $\partial D$ is bounded from below by a multiple of the vertical distance of $x+ r \tilde{\theta}$ to $\partial D$ for $0<r<f^{-1}(ax_d)$. \smallskip
		
		We choose $c:= (1-a)/(1+a)$, then 
		\begin{equation}\label{eq:claim_d_final_step}
			\begin{split}
				(1-c)x_d&\ge (1+c)ax_d= (1+c)f(f^{-1}(ax_d))\ge f(r\abs{\theta'})+ cf(r\abs{\tilde{\theta}'})\\
				&\ge \abs{\phi(r\theta')}+c\abs{\phi(r\tilde{\theta}')}\ge \phi(r\theta')-c\phi(r\tilde{\theta}')\ge \phi(r\theta')-c\phi(r\tilde{\theta}')-r(\theta_d-c\tilde{\theta}_d).
			\end{split}
		\end{equation}
		Here, we used $r<f^{-1}(ax_d)$, the monotonicity of $f$, $\abs{\theta'},|\tilde{\theta}'|< 1$, $c<1$, and $\theta_d\ge \tilde{\theta}_d$. Due to claim c, both line segments $\{x+r\theta\mid r\in [0,f^{-1}(ax_d))\}$ and $\{x+r\tilde{\theta}\mid r\in [0,f^{-1}(ax_d))\}$ are in $D$. Thus, we know that $x_d+r\theta_d-\phi(r\theta')$ and $x_d+r\tilde{\theta}_d-\phi(r\tilde{\theta}')$ are positive. By \eqref{eq:claim_d_final_step}, we find
		\begin{equation*}
			\abs{x_d+r\theta_d-\phi(r\theta')}\ge c\abs{x_d+r\tilde{\theta}_d-\phi(r\tilde{\theta}')}.
		\end{equation*}
		This proves claim d. \medskip		
		
		 Now, we are in the case 1 and, thus, the result follows by 
		 \begin{align*}
		 	\II{2}&\le sc_1^{s-1}\int\limits_{ax_d}^{f^{-1}(ax_d)} \abs{x_d+r\theta_d- \phi(x'+r\theta')}^{s-1}\frac{\omega(r)}{r^{2s}}\d r\\
		 	&\le s(c_1c)^{s-1}\int\limits_{ax_d}^{f^{-1}(ax_d)} \abs{x_d+r\tilde{\theta}_d- \phi(x'+r\tilde{\theta}')}^{s-1}\frac{\omega(r)}{r^{2s}}\d r
		 \end{align*}
	 	and the bound on $\II{2}$ in case 1. 
	\end{proof}

	\begin{remark}\label{rem:sharp_lemma}
		In particular, the bound in \autoref{lem:intermediate_estimate_1} is sharp which can be seen by considering a $\cdini$-parabola with apex in the origin for $D$ and choosing a horizontal direction for $\theta$.
	\end{remark}
	
	We will construct barriers using a combination of the function $\fd^s$ and $\fd^s\, \zeta(\fd)$. In the next two propositions we study the behavior of these functions under $A_s$. We start with the function $\fd^s$. The main observation is that $x\mapsto (x_0+b\cdot x)_+^s$ is harmonic under the operator $A_s$ in the region where it does not vanish.
	
	\begin{proposition}\label{prop:almost_harmonic}
		Let $s_0\in (0,1)$ and $D,\omega, \iota$ be as in \autoref{lem:intermediate_estimate_1}. There exists a constant $C=C(d,s_0,\omega, \Lambda)>0$ such that for any $s\in (s_0,1)$
		\begin{equation}\label{eq:almost_harmonic}
			\abs{A_s(\fd^s)(x)}\le C+C\frac{\omega(d_D(x))}{d_D(x)^{s}}+  C \frac{\omega(f^{-1}(d_D(x)))}{d_D(x)^s} \begin{cases}
				\frac{(\omega(f^{-1}(d_D(x))))^{2s-1}- 1}{1-2s}&, s\ne 1/2 \\
				\ln\big( \frac{1}{\omega(f^{-1}(d_D(x)))} \big)&, s=1/2
			\end{cases}, 
		\end{equation}
		 for $x\in D\cap B_1$. Here, the function $f$ is given by $f(t) = t \omega(t)$. If, additionally, the L{\'e}vy-measure $\nu_s$ satisfies the assumption \eqref{ass:pUB}, then 
		\begin{equation}\label{eq:almost_harmonic_with_pub}
			\abs{A_s(\fd^s)(x)}\le C+C\frac{\omega(d_D(x))}{d_D(x)^{s}}\quad\text{ for }x\in D\cap B_1.
		\end{equation}
	\end{proposition}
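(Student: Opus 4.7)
The plan is to compare $A_\mu^s\fd^s(x)$ to $A_\mu^s$ applied to an explicit harmonic profile and estimate the error pointwise. The key observation is that for any nonzero $b\in\R^d$, $a\in\R$, and $x_0\in\R^d$, the function $y\mapsto(a+b\cdot(y-x_0))_+^s$ is $A_\mu^s$-harmonic on $\{a+b\cdot(y-x_0)>0\}$: decomposing $A_\mu^s$ as a superposition of one-dimensional fractional Laplacians along $\theta\in S^{d-1}$ weighted by $\mu(d\theta)$, the inner line integral in direction $\theta$ equals $|b\cdot\theta|^{2s}$ times $(-\Delta)_\R^s[t_+^s]$ evaluated at a positive argument, which vanishes since $t_+^s$ is $s$-harmonic on the positive half-line. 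Setting
\[
\ell_x(y) := \bigl(\fd(x)+\nabla\fd(x)\cdot(y-x)\bigr)_+^s,
\]
this gives $A_\mu^s\ell_x(x)=0$ for $x\in D$ with $d_D(x)$ small enough that $|\nabla\fd(x)|$ is bounded below.

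Since $\ell_x(x)=\fd(x)^s$, the comparison identity
\[
A_\mu^s\fd^s(x)=\pv\int_{\R^d}\bigl[\ell_x(x+h)-\fd(x+h)^s\bigr]\,\nu_s(dh)
\]
holds, and I propose to estimate it by splitting into a near-field $|h|\le a\,d_D(x)$ with $a<1/4$, a middle range $a\,d_D(x)\le|h|\le\rho_1$ with $\rho_1$ as in \autoref{lem:intermediate_estimate_1}, and a tail $|h|>\rho_1$.

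In the near-field, $\fd^s$ and $\ell_x$ share value and gradient at $x$, and their quadratic Taylor expansions differ only through the $s\fd(x)^{s-1}D^2\fd(x)$ contribution (the $\nabla\fd\otimes\nabla\fd$ term cancels). Combined with \eqref{eq:reg_distance_properties}, this produces
\[
|\fd^s(x+h)-\ell_x(x+h)|\lesssim d_D(x)^{s-2}\,\omega(d_D(x))\,|h|^2\quad\text{for }|h|\le a\,d_D(x).
\]
Integrating against $\nu_s$ in polar coordinates and using $\mu(S^{d-1})\le\Lambda$ yields a contribution of order $\omega(d_D(x))/d_D(x)^s$; here the $(1-s)$ prefactor of $\nu_s$ cancels the $1/(2-2s)$ coming from the radial integration, which is exactly the mechanism that keeps the estimate robust as $s\to1{-}$. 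In the tail region, both $\ell_x(x+h)$ and $\fd(x+h)^s$ grow at most like $(1+|h|)^s$, and the finiteness of the $s$-moment of $\nu_s$ outside $B_{\rho_1}$ (uniformly in $s\in[s_0,1)$) bounds this contribution by a constant depending only on $s_0,\Lambda,\rho_1$.

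The middle range is the delicate part. Writing $\nu_s$ in polar coordinates and applying \autoref{lem:intermediate_estimate_1} directionwise against $\mu(d\theta)$ produces precisely the third summand on the right-hand side of \eqref{eq:almost_harmonic}. Under \eqref{ass:pUB}, the Lévy density is dominated by $(1-s)\Lambda|h|^{-d-2s}$, so one may instead invoke the simpler estimate \cite[Lemma B.2.4]{RoFe24}, which yields only the cleaner $\omega(d_D(x))/d_D(x)^s$ contribution and hence \eqref{eq:almost_harmonic_with_pub}. The main obstacle is precisely this middle-range bound: the possible concentration of $\nu_s$ along rays when \eqref{ass:pUB} fails forces the appearance of $\omega(f^{-1}(d_D(x)))$ together with the accompanying log/power factor, and the sharpness noted in \autoref{rem:sharp_lemma} indicates that this cannot be avoided within the present framework.
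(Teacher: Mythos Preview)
Your proposal is correct and follows essentially the same strategy as the paper's proof: compare $\fd^s$ to the affine profile $\ell_x$, use the one-dimensional $s$-harmonicity of $t_+^s$ to see $A_\mu^s\ell_x(x)=0$, and split the resulting error integral into near-field, middle range, and tail, invoking \autoref{lem:intermediate_estimate_1} (respectively \cite[Lemma~B.2.4]{RoFe24} under \eqref{ass:pUB}) in the middle range. One small difference worth noting: in the near-field the paper does not Taylor-expand $\fd^s$ and $\ell_x$ directly, but instead bounds $|\fd^s(x+y)-\ell_x(x+y)|\le s(\fd(x+y)^{s-1}+l(x+y)^{s-1})|\fd(x+y)-l(x+y)|$ via concavity of $(\cdot)^s$ and then applies the $C^2$-bound on $\fd$ to $|\fd-l|$; this sidesteps having to track the second-order cancellation you describe uniformly over the ball and is slightly cleaner to make rigorous, though your route yields the same bound.
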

	\begin{proof}
		We fix $x\in D$ and set $\eps:= d_D(x)=\distw{x}{D^c}$. Let $\rho_1$ be the constant from \autoref{lem:intermediate_estimate_1}. If $\eps\ge \rho/2$, then $\abs{A_s\fd^s}\le C$ by the smoothness of the regularized distance function $\fd$. So, let's assume that $\distw{x}{\partial D}=\eps< \rho/2$. We define the auxiliary function $l$ by 
		\begin{equation}\label{eq:aux_l}
			l(y)=\big(\fd(x)+\nabla \fd(x)\cdot (y-x)\big)_+.
		\end{equation}
		Note that the $s$-th power of this function solves $A_s l^s(y)=0$ for any $y\in \R^d$ such that $l(y)>0$, see \cite[Lemma 2.6.2]{RoFe24}. This is easily seen as follows:
		\begin{align*}
			A_s l^s(x+y)&= (1-s)\int\limits_{S^{d-1}} \pv\int\limits_{\R} \frac{\fd(x)^s- (\fd(x)+\nabla \fd(x)\cdot \theta r)_+^s}{\abs{r}^{1+2s}}\d r \mu(\d \theta)\\
			&= \int\limits_{S^{d-1}} \abs{\nabla\fd(x)\cdot \theta }(-\Delta)_\R^s[  (\cdot)_+^s ](\frac{\fd(x)}{\abs{\nabla\fd(x)\cdot \theta }}) \mu(\d \theta)= 0.
		\end{align*}
		In the last equality, we used that $(\cdot)_+^s$ is an $s$-harmonic function on the half line, see e.g.\ \cite[Theorem 2.4.1]{BuVa16}.
		
		\smallskip
		
		Depending on the size of $y$, we will derive estimates of $\abs{\fd^s_{x+y}-l^s(x+y)}$. For this, we define 
		\begin{equation*}
			a:= (\norm{\fd}_{C^1}(1+2C_1)+4)^{-1}.
		\end{equation*}
		\medskip
		
		\textbf{Case $\abs{y}<a\eps$.} Since $\fd(x)=l(x)$ and $\nabla \fd(x)=\nabla l(x)$, we may make use of the higher regularity of both $\delta$ and $l$ away from the boundary. We calculate
		\begin{align*}
			\fd(x+y)&\ge C_1^{-1}\distw{x+y}{\partial D}\ge C_1^{-1}\big( \distw{x}{\partial D} - \abs{y}\big)\ge \frac{1-a}{C_1}\eps\\
			\intertext{and}
			l(x+y)&\ge \fd(x)+\nabla \fd(x)\cdot y\ge C_1^{-1}\eps- \abs{\nabla \fd(x)}a\eps\ge \frac{\eps}{2C_1}.
		\end{align*}
		Using this, \eqref{eq:reg_distance_properties}, and the concavity of $(\cdot)_+^s$, we derive the following bound
		\begin{align*}
			\abs{\fd(x+y)^s-l^s(x+y)}&\le s\abs{\fd(x+y)-l(x+y)}(\fd(x+y)^{s-1}+l^{s-1}(x+y))\\
			&\le s2 (2C_1)^{1-s} \eps^{s-1}\abs{\fd(x+y)-l(x+y)}\\
			&\le s2 (2C_1+1) \eps^{s-1} \norm{\fd}_{C^2(B_{a\eps}(x))}\abs{y}^2\le sc_3 \omega(\eps)\eps^{s-2}\abs{y}^{2}.
		\end{align*}
	
		\textbf{Case $\abs{y}\in[a\eps,\rho_1)$.} Using Taylor's theorem, there exists a constant $c_1>0$ such that
		\begin{equation}
			\abs{\fd(x+y)-l(x+y)}\le c_1\omega(\abs{y})\abs{y}\quad \text{ for all }y\in \R^d,
		\end{equation}
		see also \cite[Lemma B.2.3]{RoFe24}. Thus, for any $x+y\in D$ using the concavity of $(\cdot)_+^s$
		\begin{align*}
			\abs{\fd(x+y)^s-l^s(x+y)}&\le s(\fd(x+y)^{s-1} + l(x+y)^{s-1})\abs{\fd(x+y)-l(x+y)}\\
			&\le sc_1(\fd(x+y)^{s-1} + l(x+y)^{s-1}) \abs{y} \omega(\abs{y}).
		\end{align*}
	
		\textbf{Case $\abs{y}\ge \rho_1$.} Here, we use the trivial bounds $\delta_{x+y}^s\le c_2 \abs{y}^s$ and $l(x+y)^s\le c_2 \abs{y}^{s}$.\medskip
				
		We estimate $A_s(\fd^s)(x)$ by splitting it into three parts.
		\begin{align*}
			\abs{A_s(\fd^s)(x)}&= \abs{A_s(\fd^s-l^s)(x)}\le  \I{}+\II{}+\III{}\\
			&:= \Bigg(\,\int\limits_{B_{a\eps}}+ \int\limits_{B_{\rho_1}\setminus B_{a\eps}} + \int\limits_{B_{\rho_1}^c}\Bigg) \abs{\fd^s(x+y)-l^s(x+y)}\nu_s(\d y). 
		\end{align*}
		Using the previous bound derived in the case $\abs{y}<a\eps$, we may estimate $\I{}$ by 
		\begin{align*}
			sc_3\omega(\eps)\eps^{s-2} \int\limits_{B_{a\eps}} \abs{y}^2\nu_s(\d y)= 2s(1-s)\omega(\eps)\eps^{s-2}\int\limits_{S^{d-1}}\int\limits_{0}^{a\eps} r^{1-2s}\d r \mu(\d \theta)\le 2sa^{2-2s}\Lambda\frac{\omega(\eps)}{\eps^s}.
		\end{align*}
		This is the desired bound. We continue by estimating the term $\III{}$ using the previous trivial growth bound of $\fd^s$ and $l^s$. This leads to
		\begin{align*}
			\III{}&\le 2(1-s)\Lambda c_2\int\limits_{\rho_1}^\infty r^{-1-s}\d r \le  \frac{2(1-s)\Lambda c_2}{s}\rho_1^{-s}.
		\end{align*}
	
		Now, we turn our attention to the term $\II{}$. Upon analyzing the term $\II{}$ with more care, one notices this term behaves differently depending on whether \eqref{ass:pUB} holds or not. Thus, we make a case distinction. 
		
		\textbf{Case (}\eqref{ass:pUB} holds\textbf{).} We assume that \eqref{ass:pUB} holds. Since $r\mapsto \omega(r)/r^{s/2}$ is decreasing in a neighborhood of $0$, we apply \cite[Lemma B.2.4]{RoFe24} to find that
		\begin{align*}
			\II{}&\le s(1-s)\Lambda c_1 \frac{\omega(a\eps)}{(a\eps)^{s/2}}\int\limits_{D \cap B_{\rho_1}\setminus B_{a\eps}}(\delta_{x+y}^{s-1} + l(x+y)^{s-1}) \abs{y}^{1-d-3s/2}\d y\le (1-s)c_3(1+ \frac{\omega(\eps)}{\eps^s}).
		\end{align*}
		This is the desired bound. \smallskip
		
		\textbf{Case (}\eqref{ass:pUB} does not hold\textbf{).} Here, the bound follows directly from \autoref{lem:intermediate_estimate_1}. 
	\end{proof}

	As announced above, in the following proposition we study the behavior of $\fd^s\zeta(\fd)$ under appropriate assumptions on the function $\zeta$, see \autoref{sec:one_dimensional}. This will be the last tool needed to construct the barriers. 

	\begin{proposition}\label{prop:subsolution_d}
		Let $s_0\in (0,1)$ and $D,\omega, \iota$ be as in \autoref{lem:intermediate_estimate_1}. Suppose the function $\zeta$ satisfies \ref{ass:zeta_pic}, \ref{ass:zeta_3rd}, \ref{ass:zeta_ndecr}, \ref{ass:zeta_iota}, and \ref{ass:zeta_growth}. There exist two positive constants $C_4$ and $C_5$ depending only on $s_0$, $\omega$, $\zeta$, and the constants from \autoref{prop:subsolution_d_1} such that
		\begin{equation}\label{eq:barrier_main_estimate_without_pUB}
			\begin{split}
				A_s[\fd^s\zeta(\fd)](x)&\le - C_{4} d_D(x)^{1-s}\zeta^\prime(d_D(x))+C_5+ C_5 \frac{\omega(d_D(x))}{d_D(x)^s}\\
				&\qquad+ C_5 \frac{\omega(f^{-1}(d_D(x)))}{d_D(x)^s} \begin{cases}
					\frac{(\omega(f^{-1}(d_D(x))))^{2s-1}- 1}{1-2s}&, s\ne 1/2 \\
					\ln\big( \frac{1}{\omega(f^{-1}(d_D(x)))} \big)&, s=1/2
				\end{cases}
			\end{split}
		\end{equation}
		for all $x\in D\cap B_1$. If additionally \eqref{ass:pUB} holds, then 
		 \begin{equation}\label{eq:barrier_main_estimate_with_pUB}
		 	A_s[\fd^s\zeta(\fd)]\le  - C_{4} d_D(x)^{1-s}\zeta^\prime(d_D(x))+C_5+C_{5} \frac{\omega(d_D(x))}{d_D(x)^s}
		 \end{equation}
		for all $x\in D\cap B_1$. 
	\end{proposition}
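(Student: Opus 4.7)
The plan is to adapt the proof of \autoref{prop:almost_harmonic} by comparing $\fd^s\zeta(\fd)$ to the tangent-plane function $l(y):=(\fd(x)+\nabla\fd(x)\cdot(y-x))_+$ from \eqref{eq:aux_l}. I decompose
\begin{equation*}
A_s[\fd^s\zeta(\fd)](x)= A_s[l^s\zeta(l)](x)+ A_s\bigl[\fd^s\zeta(\fd)-l^s\zeta(l)\bigr](x).
\end{equation*}
The first summand will deliver the main negative contribution via the one-dimensional estimate \autoref{prop:subsolution_d_1}, while the second behaves like the error $A_s[\fd^s-l^s](x)$ bounded in \autoref{prop:almost_harmonic}, modulo factors controlled by $\zeta$.

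For the first summand I use the polar representation of $A_s$. Since $l(x+r\theta)=(\fd(x)+r\,\nabla\fd(x)\cdot\theta)_+$, the change of variables $t=\fd(x)+r\,\nabla\fd(x)\cdot\theta$ and the evenness of the one-dimensional kernel $|r|^{-1-2s}$ (which absorbs the sign of $\nabla\fd(x)\cdot\theta$) yield, exactly as in the harmonic computation at the beginning of the proof of \autoref{prop:almost_harmonic},
\begin{equation*}
A_s[l^s\zeta(l)](x)= \Bigl(\int_{S^{d-1}}|\nabla\fd(x)\cdot\theta|^{2s}\mu(\d\theta)\Bigr)\,(-\Delta)^s_{\R}\bigl[(\cdot)_+^s\zeta\bigr](\fd(x)).
\end{equation*}
\autoref{prop:subsolution_d_1} bounds the one-dimensional factor from above by $-s^6 C\,\fd(x)^{1-s}\zeta'(\fd(x))$, and the angular integral is at least $\lambda |\nabla\fd(x)|^{2s}$ by the nondegeneracy assumption \eqref{eq:non-degenerate} applied to $\nabla\fd(x)/|\nabla\fd(x)|$. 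Since $|\nabla\fd|$ is bounded below near $\partial D$ and $\fd\sim d_D$ by \eqref{eq:reg_distance_properties}, this produces the term $-C_4\,d_D(x)^{1-s}\zeta'(d_D(x))$.

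For the error term, set $g(t):=t^s\zeta(t)$ and $h:=g\circ\fd-g\circ l$. Since $h(x)=0$ and $h$ vanishes to second order at $x$, one has $|A_s h(x)|\le \int|h(x+y)|\,\nu_s(\d y)$, and I split this integral over $B_{a\eps}$, $B_{\rho_1}\setminus B_{a\eps}$, and $B_{\rho_1}^c$ with $\eps=d_D(x)$, mirroring the proof of \autoref{prop:almost_harmonic}. On $B_{a\eps}$, both $\fd(x+y)$ and $l(x+y)$ lie in an interval $[c\eps,C\eps]$ on which $g$ is smooth with $|g'(t)|\lesssim t^{s-1}\zeta(t)$ and $\zeta$ uniformly bounded by \ref{ass:zeta_growth}; combined with $\|D^2\fd\|_{B_{a\eps}(x)}\lesssim\omega(\eps)/\eps$ from \eqref{eq:reg_distance_properties}, this gives a contribution of order $(1-s)\omega(\eps)/\eps^s$. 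On the annulus $B_{\rho_1}\setminus B_{a\eps}$, the same dichotomy of \autoref{prop:almost_harmonic} applies: under \eqref{ass:pUB} the direct bound via \cite[Lemma B.2.4]{RoFe24} yields the $\omega(\eps)/\eps^s$ contribution, whereas without \eqref{ass:pUB} the more delicate \autoref{lem:intermediate_estimate_1} supplies the $f^{-1}$-term, with the $\zeta$ factor entering only as a multiplicative constant. On $B_{\rho_1}^c$, assumption \ref{ass:zeta_growth} gives $|g(\fd(x+y))|+|g(l(x+y))|\lesssim (1+|y|)^{3s/2}$, which is $\nu_s$-integrable far from the origin.

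The main technical obstacle is ensuring that the $\zeta$ factor does not spoil the sharp annulus estimate in \autoref{lem:intermediate_estimate_1}. Assumption \ref{ass:zeta_growth}, together with $\fd(x+y),l(x+y)\lesssim \eps+|y|\le 2\rho_1$, keeps $\zeta$ uniformly bounded on the relevant range, so after writing $|g(a)-g(b)|\le \zeta(a\vee b)|a^s-b^s| + (a\vee b)^s|\zeta(a)-\zeta(b)|$ each term reduces, up to a multiplicative constant, to an expression that is bounded exactly by \autoref{lem:intermediate_estimate_1} (the second term can be estimated by the first after absorbing an extra power of the distance). Combining the three pieces with the negative main contribution from the one-dimensional reduction yields \eqref{eq:barrier_main_estimate_without_pUB}, and under \eqref{ass:pUB} the cleaner bound \eqref{eq:barrier_main_estimate_with_pUB}.
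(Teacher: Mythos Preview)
Your overall strategy matches the paper's: decompose $A_s[\fd^s\zeta(\fd)](x)=A_s[l^s\zeta(l)](x)+A_s[\fd^s\zeta(\fd)-l^s\zeta(l)](x)$, obtain the negative main term from the one-dimensional \autoref{prop:subsolution_d_1} via the polar representation, and reduce the error to the bound already proved in \autoref{prop:almost_harmonic}. Your computation of the main term, with the factor $\int_{S^{d-1}}|\nabla\fd(x)\cdot\theta|^{2s}\mu(\d\theta)$ multiplying $(-\Delta)^s_{\R}[(\cdot)_+^s\zeta](\fd(x))$, is correct and in fact more explicit than what the paper writes.

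The gap is in the error term on the annulus. Your product-rule split
\[
|g(a)-g(b)|\le \zeta(a\vee b)\,|a^s-b^s|+(a\vee b)^s\,|\zeta(a)-\zeta(b)|
\]
introduces a second summand whose treatment you leave to the sentence ``the second term can be estimated by the first after absorbing an extra power of the distance.'' This is not justified: on $B_{\rho_1}\setminus B_{a\eps}$ both $a=\fd(x+y)$ and $b=l(x+y)$ may approach $0$, and $\zeta'$ blows up at the origin, so no uniform Lipschitz bound on $\zeta$ is available; nor does \autoref{lem:intermediate_estimate_1} say anything about the integral of $(a\vee b)^s|\zeta(a)-\zeta(b)|$. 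The paper bypasses this detour with a single inequality (its Claim~1): since $\zeta$ is concave with $\zeta(0)=0$ one has $t\zeta'(t)\le\zeta(t)$, hence
\[
\bigl(t^s\zeta(t)\bigr)'=t^{s-1}\bigl(s\zeta(t)+t\zeta'(t)\bigr)\le (s+1)\,\|\zeta\|_{L^\infty(0,t_1)}\,t^{s-1},
\]
which integrates to $|t^s\zeta(t)-r^s\zeta(r)|\le \tfrac{2(s+1)}{s}\|\zeta\|_{L^\infty(0,t_1)}\,|t^s-r^s|$ for $0<r,t\le t_1$. With this, the entire error integral over $B_1$ is bounded by a constant times $\int_{\R^d}|\fd^s-l^s|(x+y)\,\nu_s(\d y)$, and \autoref{prop:almost_harmonic} applies verbatim---there is no need to redo the three-ring analysis. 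Outside $B_1$ the trivial growth bound from \ref{ass:zeta_growth} suffices, as you noted. Replacing your product-rule split by this derivative bound closes the argument.
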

	\begin{proof}
		Again, we define the auxiliary function $l$ as in \eqref{eq:aux_l}. Fix $x\in D$ and $\eps:= \distw{x}{\partial D}$. If $\eps\ge \rho/2$, then the estimate is trivial due to the regularity of $\fd^s\zeta(\fd)$. So we assume the contrary. For any $y\in \R^d$ such that $l(y)>0$, by \autoref{prop:subsolution_d_1} there exists a positive constant $c_1$ depending only on $\omega, s_0,D$ such that
		\begin{equation*}
			A_s[l^s\zeta(l)](x)\le - \Lambda c_1  \eps^{1-s} \zeta^\prime(\eps).
		\end{equation*}
		As in the proof of \autoref{prop:almost_harmonic}, it remains to prove an appropriate bound on 
		\begin{align*}
			\I{}:=\int\limits_{\R^d} \abs{\fd^s\zeta(\fd)-l^s\zeta(l)}(x+y)\nu_s(\d y).
		\end{align*}
		Since $\zeta$ is bounded and concave, the following claim holds true.\smallskip
		
		\textbf{Claim 1.} $\abs{t^s\zeta(t)-r^s\zeta(r)}\le 2\frac{s+1}{s}\norm{\zeta}_{L^\infty(0,t_1)}\abs{t^s-r^s}$ for all $0<r<t<t_1$. \smallskip
		
		We prove this claim. Without loss of generality we assume $t\ge r$, say $t=r+\eps$. Now, it is left to show for any $r\ge 0$, $\eps\ge 0$
		\begin{equation*}
			(r+\eps)^s\zeta(r+\eps)-r\zeta(r)\le 2\norm{\zeta}_{L^\infty(0,t_1)}((r+\eps)^s-r^s).
		\end{equation*}
		This inequality is true for $\eps =0$. We differentiate with respect to $\eps$. Then it remains to prove 
		\begin{align*}
			s\zeta(r+\eps)+ (r+\eps)\zeta'(r+\eps)\le s2\frac{s+1}{s}\norm{\zeta}_{L^\infty(0,t_1)} .
		\end{align*}
		This is true since $\zeta$ is concave and, thus, $t\zeta'(t)\le \zeta(t)$. \medskip
		
		Now, we fix a very large constant $t_1$. We will split the integration domain of $\I{}$ into $B_1(0)$ and its complement. If $y\in B_1(0)$, then 
		\begin{align*}
			\fd(x+y)&\le C_1 \distw{x+y}{D^c}\le C_1 (\rho/2\wedge t_0 \,+1 ),\\
			l(x+y)&\le \fd(x)+\abs{\nabla \fd(x)\cdot y }\le C_1 \rho/2\wedge t_0+\norm{\fd}_{C^1}. 
		\end{align*}
		Here, we used \eqref{eq:reg_distance_properties}. If we set $c_2:= C_1 (\rho/2\wedge t_0 \,+1 )+\norm{\fd}_{C^1}$, then, using the claim 1, we find 
		\begin{align*}
			\int\limits_{B_1(0)} \abs{\fd^s\zeta(\fd)- l^s \fd(l)}(x+y)\nu_s(\d y)&\le 2\frac{s+1}{s}\zeta(c_2) \int\limits_{\R^d} \abs{\fd^s-l^s}(x+y)\nu_s(\d y)
		\end{align*}
		Now, an application of \autoref{prop:almost_harmonic} yields the desired bound. Outside of the ball $B_1(0)$, we use the trivial bound $\abs{\fd(x+y)-l(x+y)}\le c_3 (1+\abs{y})$. This proves the result just as in the estimate of the term $\III{}$ in the proof of \autoref{prop:almost_harmonic}. 		
	\end{proof}
	Next, we prove the existence of a function $\zeta$ as required in \autoref{prop:subsolution_d}. We prove two results depending on whether \eqref{ass:pUB} holds or not.
	\begin{lemma}\label{lem:zeta_with_pUB}
		Let $0<\iota<\min\{s,1/3\}$, $\omega$ be the modulus of continuity modified as in \autoref{rem:replace_modulus}, and $C>0$. Then there exists $t_0>0$ and $\zeta:[0,\infty)\to [0,\infty)$ continuous on $[0,\infty)$ and in $C^{2}((0,\infty))\cap C^{3}((0,2))$ which satisfies \ref{ass:zeta_pic}, \ref{ass:zeta_3rd}, \ref{ass:zeta_ndecr}, \ref{ass:zeta_iota}, and \ref{ass:zeta_growth}. Furthermore, $\zeta$ satisfies 
		\begin{equation}\label{eq:zeta_omega_coupling_pUB}
				C\big(t^s+\omega(t)\big)\le t \zeta^\prime(t).
		\end{equation}
		for all $0<t<t_0$. 
	\end{lemma}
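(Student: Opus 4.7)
I plan to take the ansatz
\[
\zeta(t) := K \int_0^t \frac{\omega(r)}{r}\,dr + L\, t^{\iota}\qquad \text{for } t \in [0, t_0],
\]
with $t_0 \in (0,1)$ small and constants $K := C$, $L := C\, t_0^{s-\iota}/\iota$, and then extend $\zeta$ to $[t_0, \infty)$ to a smooth, increasing, concave function by matching to $t \mapsto \zeta(t_0) + (K\omega(t_0) + L \iota\, t_0^{\iota})\,\log(t/t_0)$ beyond a short $C^\infty$ interpolation window. A standard mollification argument on $[t_0, t_0+\varepsilon]$ yields $C^2$ (in fact $C^\infty$ on $(t_0, \infty)$) matching while preserving concavity, monotonicity, and the nondecreasingness of $t\zeta'(t)$. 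The Dini condition \eqref{eq:dini} on $\omega$ makes the defining integral finite and yields $\zeta(0) = 0$, and $\omega \in C^2((0,\infty))$ from \autoref{lem:modulus} gives $\zeta \in C^3((0,t_0))$; together with the smooth extension this supplies the required regularity $\zeta \in C^2((0,\infty)) \cap C^3((0,2))$. The logarithmic growth of the extension is dominated by $(1+t)^{s/2}$, so \ref{ass:zeta_growth} holds.

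Monotonicity, positivity, and concavity on $[0, t_0]$ (hence \ref{ass:zeta_pic}) come from the corresponding properties of $t^\iota$ and of $\tilde\omega(t) := \int_0^t \omega(r)/r\,dr$, whose second derivative $(t\omega'(t) - \omega(t))/t^2$ is non-positive by concavity of $\omega$ with $\omega(0) = 0$. The coupling \eqref{eq:zeta_omega_coupling_pUB} is then by design,
\[
t\zeta'(t) = K\omega(t) + L \iota\, t^{\iota} = C\omega(t) + C t_0^{s-\iota}\, t^{\iota} \ge C\omega(t) + C\, t^s\qquad\text{on }(0,t_0],
\]
and since both summands on the right are nondecreasing, \ref{ass:zeta_ndecr} is immediate on $[0,t_0]$ and is preserved by the choice of extension. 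For \ref{ass:zeta_iota} write $\zeta(t)/t^\iota = K \tilde\omega(t)/t^\iota + L$. Since $\omega(t)/t^\iota$ is non-increasing on $(0, t_0]$ by \autoref{lem:modulus}, one has $\omega(r) \ge (\omega(t)/t^\iota)\,r^\iota$ for $0 < r \le t$, hence
\[
\tilde\omega(t) \ge \frac{\omega(t)}{t^\iota}\int_0^t r^{\iota - 1}\,dr = \frac{\omega(t)}{\iota},
\]
and $(\tilde\omega(t)/t^\iota)' = (\omega(t) - \iota \tilde\omega(t))/t^{\iota+1} \le 0$ as required.

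The main technical step will be \ref{ass:zeta_3rd}. A direct computation yields
\[
t^2 \zeta^{(3)}(t) = K\bigl(t\omega''(t) - 2\omega'(t) + 2\omega(t)/t\bigr) + L \iota(\iota-1)(\iota-2)\, t^{\iota-1},
\]
and substituting the key bound \eqref{eq:modulus_second_derivative}, namely $t\omega''(t) \ge -\omega(t)/t - 3\omega'(t)$, together with the infinitesimal version $\omega'(t) \le \iota\omega(t)/t$ of $\omega(t)/t^\iota$ being non-increasing, leads to
\[
t^2 \zeta^{(3)}(t) + c_1 \zeta'(t) \ge K\,\frac{\omega(t)}{t}\bigl(1 + c_1 - 5\iota\bigr) + L \iota\, t^{\iota - 1}\bigl((\iota-1)(\iota-2) + c_1\bigr).
\]
For any $c_1 \ge 1$ both coefficients on the right are strictly positive: the first because $\iota < 1/3$ gives $5\iota - 1 < 2/3$, the second because $(\iota-1)(\iota-2) > 1$ on $(0, 1/3)$. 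This is exactly where the ansatz is delicate: the $L\,t^{\iota}$-summand is essential both to absorb the $t^s$-part of the coupling \eqref{eq:zeta_omega_coupling_pUB} and to supply the positive $t^{\iota-1}$-contribution needed to dominate the negative $-5K\omega'(t)$-term appearing after \eqref{eq:modulus_second_derivative} is applied; without it neither estimate closes.
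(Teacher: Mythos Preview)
Your core construction coincides with the paper's. Writing out the paper's choice $\zeta(t)=C\int_0^t (r^\iota+\omega(r))r^{-1}\,dr = (C/\iota)\,t^\iota + C\int_0^t \omega(r)/r\,dr$ shows it is exactly your ansatz $K\tilde\omega(t)+Lt^\iota$ with $K=C$ and $L=C/\iota$ in place of your $L=Ct_0^{s-\iota}/\iota$; the verifications of \ref{ass:zeta_pic}--\ref{ass:zeta_iota} and of the coupling \eqref{eq:zeta_omega_coupling_pUB} run the same way. (Your closing remark that the $Lt^\iota$ summand is needed to ``dominate the negative $-5K\omega'(t)$ term'' is inaccurate: using $\omega'\le\iota\,\omega/t$ the $K$-part alone already closes \ref{ass:zeta_3rd}; the $t^\iota$ summand is only needed for the $t^s$ part of the coupling.)

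The genuine gap is in your extension. With the logarithmic tail $g(t)=\zeta(t_0)+M\log(t/t_0)$, $M:=t_0\zeta'(t_0)=K\omega(t_0)+L\iota t_0^\iota$, one has $tg'(t)\equiv M$. Since the explicit formula also gives $t\zeta'(t)=M$ at $t=t_0$, any interpolation on $[t_0,t_0+\varepsilon]$ that keeps $t\mapsto t\zeta'(t)$ nondecreasing and lands on the value $M$ at both endpoints is forced to satisfy $t\zeta'(t)\equiv M$, i.e.\ $\zeta'(t)=M/t$, on the whole window. But then $\zeta''(t_0^+)=-M/t_0^2$, whereas a direct computation gives
\[
\zeta''(t_0^-)-\bigl(-M/t_0^2\bigr)=K\,\frac{\omega'(t_0)}{t_0}+L\iota^2\,t_0^{\iota-2}>0,
\]
so $C^2$ matching at $t_0$ is impossible, and no ``standard mollification'' repairs this without violating \ref{ass:zeta_ndecr}. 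The paper sidesteps the issue by extending from $t=1$ via the explicit power law $\zeta(t)=a\,(t-1+b^{2/(2-s)})^{s/2}+\text{const}$, choosing $a,b$ so that $\zeta,\zeta',\zeta''$ all match at $1$; for this extension $t\zeta'(t)$ is \emph{strictly} increasing (this uses the auxiliary inequality $\zeta'(1)\ge -\zeta''(1)$, which your formula also satisfies), so there is no conflict with \ref{ass:zeta_ndecr}. Your scheme is salvageable if you take the logarithmic slope strictly larger than $M$, so that $t\zeta'$ has room to rise across the window, and then carry out the matching by explicit Hermite interpolation rather than mollification; note also that the lemma demands $C^3$ on $(0,2)$, so third-derivative matching at the junction must be addressed, not only $C^2$.
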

	\begin{proof}
		\textit{Step 1.} In the first step, we define the function $\zeta$ in a neighborhood of $0$ such that \ref{ass:zeta_pic}-\ref{ass:zeta_iota} are satisfies. \smallskip
		
		We define the function $\zeta:[0,\infty)\to (0,\infty)$ via
		\begin{equation}\label{eq:def_zeta_with_pUB}
			\zeta(t):= C\int\limits_{0}^t \frac{r^{\iota}+\omega(r)}{r}\d r.
		\end{equation} 
		Since $\omega\in C^2$ and satisfies \eqref{eq:dini}, the function $\zeta$ is in $C([0,\infty))\cap C^3((0,\infty))$. Now, we check all assumptions \ref{ass:zeta_pic}-\ref{ass:zeta_iota}.\smallskip
		
		Since $\omega$ satisfies \eqref{eq:dini}, $\zeta$ is continuous and $\zeta(0)=0$. Additionally, $\zeta$ is positive in $(0,\infty)$ since $\omega$ is nonnegative. Its derivative 
		\begin{equation}\label{eq:zeta_prime_calc}
			\zeta'(t)= C \frac{t^{\iota}+\omega(t)}{t}
		\end{equation}
		is positive and, thus, $\zeta$ is increasing. Its second derivative equals
		\begin{equation}\label{eq:zeta_prime_prime_calc}
			\zeta^{\prime\prime}(t)= -C (1-\iota)t^{\iota-2}- C \frac{\omega(t)-t\omega'(t)}{t^2}
		\end{equation}
		which is negative since $\omega$ is concave. Thus, the function $\zeta$ is concave and \ref{ass:zeta_pic} holds true. \smallskip
		
		A minor calculation reveals 
		\begin{align*}
			t^2 \zeta^{\prime\prime\prime}(t) = (2-\iota)(1-\iota)Ct^{\iota-1}+C\frac{2\omega(t)-2t\omega^\prime(t)+t^2\omega^{\prime\prime}(t)}{t}.
		\end{align*} 
		Now using \eqref{eq:modulus_second_derivative} and the previous calculations, the inequality \ref{ass:zeta_3rd} is satisfied if 
		\begin{align*}
			\omega(t)-2t\omega^\prime(t)-3t\omega^{\prime}(t)\ge -c_1 \omega(t).
		\end{align*}
		Since $\omega$ is concave, the assumption \ref{ass:zeta_3rd} is satisfied with the constant $c_1=4$. \smallskip
		
		We calculate the derivative of $t\zeta'(t)$. It equals
		\begin{align*}
			C \iota t^{\iota-1}+ C \omega^{\prime}(t)
		\end{align*}
		which is trivially positive. Thus, the assumption \ref{ass:zeta_ndecr} holds true for our choice of $\zeta$. \smallskip
		
		Finally, the derivative of $\zeta(t)/t^{\iota}$ reads
		\begin{align*}
			C\frac{t^{\iota}+ \omega(t)}{t^{\iota+1}}-\iota \frac{\zeta(t)}{t^{\iota}+1}\ge C\frac{t^{\iota}+ \omega(t)}{t^{\iota+1}}-\iota \frac{C}{t^{\iota}}\int\limits_{0}^t \frac{r^{\iota}+ \frac{1}{\iota}r\omega^\prime(r)}{r}\d r\ge 0.
		\end{align*}
		Here, we used \eqref{eq:modulus_decreasing_iota}. Thus, \ref{ass:zeta_iota} follows. \smallskip
		
		Note that $\zeta$ as defined in step 1 may not have the correct growth bound \ref{ass:zeta_growth}. In step 2, we will change $\zeta$ away from the origin. But first, we need the following claim. \smallskip
		
		\textit{Claim a.} The function $\zeta$ from step 1 satisfies 
		\begin{equation}\label{eq:zeta_step1_additional_property}
			\zeta'(t)\ge -t\zeta^{\prime\prime}(t).
		\end{equation}
	
		The claim a is an easy consequence from the previous calculations \eqref{eq:zeta_prime_calc} and \eqref{eq:zeta_prime_prime_calc}.\medskip
		
		\textit{Step 2.} Now, we restrict the function $\zeta$ defined in step 1 to $[0,1]$ and redefine it on  $(1,\infty)$ such that \ref{ass:zeta_pic}, \ref{ass:zeta_ndecr}, and \ref{ass:zeta_growth} remain true. This is achieved by choosing 
		\begin{align*}
			\zeta(t)&:= a (t-1+b^{\frac{2}{2-s}})^{s/2}+(\zeta(1)-a\,b^{\frac{s}{2-s}})\text{ for all } t\in(1,\infty) \\
			\intertext{where}
			b&= \big(\frac{2-s}{2} \frac{\zeta'(1)}{-\zeta^{\prime\prime}(1)}\big)^{\frac{2-s}{6-2s}},\quad
			a:=\frac{2}{s b} \, \zeta'(1).
		\end{align*}
		Note that $a,b,c$ are both nonnegative. With this choice $\zeta$ is positive in $(0,\infty)$ and satisfies \ref{ass:zeta_growth} by construction. Note that
		\begin{align*}
			\lim\limits_{t\to 1+} \zeta(t)&= a (0+b^{\frac{2}{2-s}})^{s/2}+(\zeta(1)-a\,b^{\frac{s}{2-s}})= \zeta(1),\\
			\lim\limits_{t\to 1+} \zeta'(t)&= \frac{s}{2}a (0+b^{\frac{2}{2-s}})^{s/2-1}= \frac{s}{2}a b^{-1}= \zeta'(1),\\
			\lim\limits_{t\to 1+} \zeta^{\prime\prime}(t)&= -\frac{s}{2}\frac{2-s}{2}a (0+b^{\frac{2}{2-s}})^{s/2-2}=-\frac{2-s}{2} \zeta'(1)b^{\frac{2s-6}{2-s}}=\zeta^{\prime\prime}(1).
		\end{align*}
		Thus, the function $\zeta$ is in $C^2_{\loc}(\R_+)$. Due to the positivity of $a$ and $b$ the function $\zeta$ is globally increasing and concave. Lastly, we check that \ref{ass:zeta_ndecr} is true. We differentiate the function for $t>1$
		\begin{align*}
			t\zeta'(t)&= \frac{ast}{2} (t-1+b^{\frac{2}{2-s}})^{s/2-1},\\
			\frac{2}{as}\partial_t t\zeta'(t)&=  (t-1+b^{\frac{2}{2-s}})^{s/2-1}- (1-s/2)t (t-1+b^{\frac{2}{2-s}})^{s/2-2}.
		\end{align*}
		This is nonnegative for $t>1$ if and only if the following term is positive.
		\begin{equation}\label{eq:extension_zeta_to_R_plus}
			\begin{split}
				(t-1+b^{2/(2-s)})-(1-s/2)t&\ge -(1-s/2)+ \big(\frac{2-s}{2} \frac{\zeta'(1)}{-\zeta^{\prime\prime}(1)}\big)^{\frac{2}{6-2s}} \\
				&\ge -(1-s/2)+ \big(\frac{2-s}{2}\big)^{\frac{2}{6-2s}}  \ge 0. 
			\end{split}
		\end{equation}
		Here, we used claim a.
	\end{proof}
	
		\begin{lemma}\label{lem:zeta_without_pUB}
		Let $s_0\in (0,1)$, $0<\iota<\min\{s,1/3\}$, $\omega$ be the modulus of continuity modified as in \autoref{rem:replace_modulus}, and $C>0$. Then there exists $t_0>0$ such that for all $s_0\le s<1$ there exists a function $\zeta:[0,\infty)\to [0,\infty)$ continuous on $[0,\infty)$ and in $C^{2}((0,\infty))\cap C^{3}((0,2))$ which satisfies \ref{ass:zeta_pic}, \ref{ass:zeta_3rd}, \ref{ass:zeta_ndecr}, \ref{ass:zeta_iota}, and \ref{ass:zeta_growth}. Furthermore, $\zeta$ satisfies 
		\begin{equation}\label{eq:zeta_omega_coupling_without_pUB}
			C\Big(t^s+\omega(t)+ \frac{\omega(f^{-1}(t))^{2s}- \omega(f^{-1}(t))}{1-2s}\Big)\le t \zeta^\prime(t)
		\end{equation}
		for all $0<t<t_0$. Here, $f(t)= t\omega(t)$. In the case $s=1/2$ the assumption \eqref{eq:zeta_omega_coupling_without_pUB} reads
		\begin{align*}
			C\Big(t^s+\omega(t)+ \omega(f^{-1}(t))\ln\big(\frac{1}{\omega(f^{-1}(t))}\big)\Big)\le t \zeta^\prime(t)
		\end{align*}
		for all $0<t<t_0$.
	\end{lemma}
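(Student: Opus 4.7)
I intend to reduce \autoref{lem:zeta_without_pUB} to its already-established analogue \autoref{lem:zeta_with_pUB} by absorbing the additional coupling term into the modulus of continuity. Let
\begin{equation*}
h(t) := \begin{cases} \dfrac{\omega(f^{-1}(t))^{2s} - \omega(f^{-1}(t))}{1-2s}, & s\neq 1/2, \\ \omega(f^{-1}(t))\,\ln\bigl(1/\omega(f^{-1}(t))\bigr), & s = 1/2. \end{cases}
\end{equation*}
After verifying that $h$ qualifies as a Dini-type modulus on some interval $(0,t_1)$ (see below), I would extend $h$ to a bounded nondecreasing function on $[0,\infty)$ (e.g.\ by freezing it above $t_1$), set $\overline\omega:=\omega+h$, apply \autoref{lem:modulus} with parameter $\iota$ to obtain a concave $C^2$ majorant $\omega^\ast\geq \overline\omega$ enjoying \eqref{eq:modulus_decreasing_iota} and \eqref{eq:modulus_second_derivative}, and finally invoke \autoref{lem:zeta_with_pUB} with $\omega^\ast$ in place of $\omega$ (same $\iota$ and same $C$). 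This produces a $\zeta$ satisfying \ref{ass:zeta_pic}--\ref{ass:zeta_growth} together with $t\zeta'(t)\geq C(t^s+\omega^\ast(t))\geq C(t^s+\omega(t)+h(t))$ for small $t$, which is precisely \eqref{eq:zeta_omega_coupling_without_pUB}.

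The main technical step is to show that $h$ is continuous with $h(0)=0$, nondecreasing on $(0,t_1)$ for a sufficiently small $t_1>0$, and satisfies $\int_0^{t_1}h(t)/t\,dt<\infty$. Continuity at zero is clear since $\omega(f^{-1}(t))\to 0$ as $t\to 0$. Monotonicity reduces, via the monotonicity of $t\mapsto \omega(f^{-1}(t))$, to monotonicity of the scalar function $a\mapsto(a^{2s}-a)/(1-2s)$ on $(0,(2s)^{1/(1-2s)})$ (respectively $a\mapsto a\ln(1/a)$ on $(0,1/e)$ when $s=1/2$); choosing $t_1$ so that $\omega(f^{-1}(t_1))$ lies in that interval suffices. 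Integrability follows from the substitution $u=f^{-1}(t)$: since $dt=(\omega(u)+u\omega'(u))\,du$ and $u\omega'(u)\leq \omega(u)$ by concavity of $\omega$, I obtain
\begin{equation*}
\int_0^{t_1}\frac{h(t)}{t}\,dt\leq \frac{2}{|1-2s|}\int_0^{f^{-1}(t_1)}\bigl(\omega(u)^{2s-1}-1\bigr)\frac{\omega(u)}{u}\,du,
\end{equation*}
and the right-hand side is controlled by \eqref{eq:dini_2s}, since the difference between $\omega(u)^{2s-1}-1$ and $\omega(u)^{2s-1}-\omega(1)^{2s-1}$ contributes an additional $\int \omega(u)/u\,du$-perturbation that converges by \eqref{eq:dini}.

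The hard part will be preserving uniformity in $s\in[s_0,1)$, which the statement demands via a single $t_0$ working for every $s$. The monotonicity threshold $(2s)^{1/(1-2s)}$ is bounded below by a positive constant on $[s_0,1)$, and by \autoref{lem:special_2s_0_dini_bigger_s} the condition \eqref{eq:dini_2s} at level $s_0$ dominates the one at every $s\geq s_0$, so the constants in the substitution estimate above and hence $t_1$ (and finally $t_0$) can be chosen uniformly in $s$. The case $s=1/2$ requires a parallel but distinct computation, replacing the algebraic expression by the logarithmic one throughout and using $-\ln a$ in place of $(a^{2s-1}-1)/(1-2s)$; this is routine once the general scheme above is in place. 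Apart from this bookkeeping, the entire proof is a direct reduction to \autoref{lem:modulus} and \autoref{lem:zeta_with_pUB}.
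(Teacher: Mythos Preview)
Your reduction is sound and takes a genuinely different route from the paper. The paper does not absorb $h$ into the modulus; instead it writes $\zeta=\zeta_1+\zeta_2$, applies \autoref{lem:modulus} to the bare function $t\mapsto\omega(f^{-1}(t))$ to obtain a concave $C^2$ majorant $\gamma$, sets $\zeta_2(t)=C\int_0^t \frac{\gamma(r)^{2s}-\gamma(r)}{(1-2s)\,r}\,dr$, and then verifies \ref{ass:zeta_pic}--\ref{ass:zeta_iota} for $\zeta_2$ by a page of explicit derivative computations. Your approach trades that computation for one invocation of \autoref{lem:zeta_with_pUB}, which is cleaner; the paper's approach in turn gives a more explicit formula for $\zeta$ and avoids running the full \autoref{lem:modulus} machinery on the composite $\omega+h$.

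One point in your uniformity argument needs sharpening. As written, $h=h_s$ depends on $s$, hence so does $\overline\omega=\omega+h_s$, and therefore the output $\omega^\ast$ of \autoref{lem:modulus}---and with it the $t_0$ produced by \autoref{lem:zeta_with_pUB}---could vary with $s$. Knowing that the monotonicity threshold $(2s)^{1/(1-2s)}$ and the Dini integral of $h_s$ are uniformly controlled does not by itself pin down a uniform $t_0$ for the output of \autoref{lem:modulus}. The clean fix is to observe the pointwise domination $h_s\le h_{s_0}$ for every $s\in[s_0,1)$ and small $t$: writing $a=\omega(f^{-1}(t))\in(0,1)$, the map $s\mapsto a\cdot\frac{a^{2s-1}-1}{1-2s}$ (continuously extended at $s=1/2$) is nonincreasing on $[s_0,1)$, as a short calculus check shows. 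Feed the single $s$-independent function $\overline\omega:=\omega+h_{s_0}$ into \autoref{lem:modulus}, obtain one $\omega^\ast$, and apply \autoref{lem:zeta_with_pUB} with this fixed $\omega^\ast$; then $t\zeta'(t)=C(t^\iota+\omega^\ast(t))\ge C(t^s+\omega(t)+h_s(t))$ for all $s\in[s_0,1)$ simultaneously, with a common $t_0$.
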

	\begin{proof}
		\textit{Step 1.} In the first step, we define the function $\zeta$ in a neighborhood of $0$ such that \ref{ass:zeta_pic}-\ref{ass:zeta_iota} hold. \smallskip
		
		We split the definition of $\zeta$ into two parts. $\zeta= \zeta_1+\zeta_2$. Now, we define, just as in \autoref{lem:zeta_with_pUB}, \begin{equation*}
			\zeta_1(t):= C \int\limits_{0}^t \frac{r^\iota+\omega(r)}{r}\d r 
		\end{equation*} 
		for all $t\ge 0$. By \autoref{lem:zeta_with_pUB}, more precisely its proof, this function satisfies all the desired properties.\smallskip
	
		To define $\zeta_2$, we make a few observations. We fix $t_1>0$ such that $\omega(f^{-1})(t_1)\le  (2s)^{1/(1-2s)}$. In the interval $0\le r<2s^{1/(1-2s)}$ the function $r\mapsto (r^{2s}-r)/(1-2s)$ is increasing. Note that $h$ is increasing. Since 
		\begin{equation*}
			\frac{\omega(f^{-1}(t))}{t}= \frac{1}{f^{-1}(t)}\to 0\text{ as }t\to \infty,
		\end{equation*}
		the function $h$ is sublinear. Now, let $\gamma$ the function obtained from \autoref{lem:modulus} applied to the function $h:t\mapsto \omega(f^{-1}(t))$. Let $t_1>0$ be such that $\gamma(t_1)\le 1/e\le 1$ and
		\begin{equation}\label{eq:t1_choice}
			\gamma(t_1)\le a_s:=\begin{cases}
				\min\{ (1-\iota)^{1/(1-2s)}, (2s)^{1/(1-2s)} \} &, s>1/2,\\
				\min\{ \big(\frac{1-2s\iota}{1-\iota}\big)^{1/(1-2s)}, (2s)^{1/(1-2s)}\} &, s< 1/2.
			\end{cases}				
		\end{equation}
		Note that $a_s$ is bounded from below by a positive constant depending only on $\iota$ and $s_0$. \smallskip
		
		We define for any $0\le t\le t_1$
		\begin{equation*}
			\zeta_2(t):= \begin{cases}
				C \int\limits_{0}^t \frac{\gamma(r)^{2s}-\gamma(r)}{(1-2s)r}\d r &, s\ne 1/2,\\
				C \int\limits_{0}^t \frac{\gamma(r)}{r}\ln\big(\frac{1}{\gamma(r)}\big)\d r &, s=1/2.
			\end{cases} 
		\end{equation*} 
		Now, we check the assumptions \ref{ass:zeta_pic}-\ref{ass:zeta_iota} in the interval $[0,t_1]$. Thereafter, we will extend $\zeta_2$ to $\R_+$ appropriately. \smallskip
		
		In the regime $0<t\le t_1$, $\zeta_2$ is positive since $0<\gamma(t)\le 1/e<1$. Its derivative equals
		\begin{align*}
			\zeta_2'(t)= \begin{cases}
				C\frac{\gamma(t)^{2s}-\gamma(t)}{(1-2s)t} &, s\ne 1/2, \\
				C \frac{\gamma(t)}{t}\ln\big(\frac{1}{\gamma(t)}\big)&, s=1/2.
			\end{cases}
		\end{align*}
		Again, by the same reasons as before, this is positive in $(0,t_1)$. To check the concavity, we calculate the second derivative. 
		\begin{align*}
			\zeta_2^{\prime\prime}(t)=\frac{C}{1-2s} \frac{\big(2s\gamma(t)^{2s-1}-1\big)t\gamma'(t) - \gamma(t)^{2s}+ \gamma(t)}{t^2}.
		\end{align*}
		If $s>1/2$, then we estimate this from above by 
		\begin{align*}
			\frac{C}{2s-1} \frac{\gamma(t)^{2s}-(1-\iota) \gamma(t)}{t^2}\le 0
		\end{align*}
		Here, we used \eqref{eq:modulus_decreasing_iota} for $\gamma$ and $\gamma(t)\le \gamma(t_1)\le (1-\iota)^{1/(1-2s)}$. If $s<1/2$, then note that $(2s\gamma(t)^{2s-1}-1)\ge 0$ due to $\gamma(t)\le \gamma(t_1)\le (2s)^{1/(1-2s)}$. Thus, we estimate the second derivative of $\zeta_2$ by
		\begin{align*}
			\zeta_2^{\prime\prime}(t)\le \frac{C}{1-2s} \frac{ - (1-2s\iota)\gamma(t)^{2s}+ (1-\iota)\gamma(t)}{t^2}\le 0.
		\end{align*}
		In the last inequality, we used 
		\begin{equation*}
			\gamma(t)\le \gamma(t_0)\le \big(\frac{1-2s\iota}{1-\iota}\big)^{1/(1-2s)}.
		\end{equation*}
		Lastly, if $s=1/2$, then 
		\begin{align*}
			\zeta_2^{\prime\prime}(t)=C \frac{-t+\big(t\gamma'(t)-\gamma(t)\big)\ln(1/\gamma(t))}{t^2}<0.
		\end{align*}
		Here, we used $\ln(1/\gamma(t))$ which follows from $\gamma(t)\le \gamma(t_0)\le 1$. Thus, the property \ref{ass:zeta_pic} holds. \smallskip
		
		We calculate the third derivative of $\zeta_2$. First, we assume $s\ne 1/2$. The term $\zeta_2^{(3)}(t)$ equals
		\begin{align*}
			&C\frac{ -2 \big(2s\gamma(t)^{2s-1}-1\big)t\gamma'(t)+2\big(\gamma(t)^{2s}-\gamma(t)\big)  +\gamma^{\prime\prime}(t)t^2\big( 2s\gamma(t)^{2s-1}-1 \big)}{(1-2s)t^3}\\
			&\quad- \frac{C 2s t^2 (\gamma'(t))^2\gamma(t)^{2s-2}}{t^3}=: \frac{\I{}+\II{}}{t^2}.
		\end{align*}
		Note that 
		\begin{equation}\label{eq:Z2_without_pUB_help}
			\frac{\gamma(t)^{2s-1}-1}{1-2s}\ge \frac{2s\gamma(t)^{2s-1}-1}{1-2s}
		\end{equation}
		holds for all $t>0$. Using \eqref{eq:Z2_without_pUB_help}, the concavity of $\gamma$, and the previous representation of $\zeta_2^\prime$, we can bound $\I{}$ from below by 
		\begin{align*}
			-2 \frac{t\gamma'(t)}{\gamma(t)}\zeta_2^{\prime}(t)+2\zeta_2^{\prime}(t)+\frac{\gamma^{\prime\prime}(t)t^2}{\gamma(t)}\zeta_2^{\prime}(t)\ge-5 \frac{t\gamma'(t)}{\gamma(t)}\zeta_2^{\prime}(t)+\zeta_2^{\prime}(t)\ge -4\zeta_2^{\prime}(t).
		\end{align*}
		Here, we used \eqref{eq:modulus_second_derivative} and the concavity of $\gamma$. Due to the previous representation of $\zeta_2^{\prime}$, the bound 
		\begin{equation*}
			\II{}\ge -2s \zeta_2^{\prime}(t)
		\end{equation*}
		follows from 
		\begin{align*}
			\gamma(t)^{2s}\le \frac{\gamma(t)^{2s}-\gamma(t)}{1-2s}.
		\end{align*}
		This is true due to the assumption $\gamma(t)\le \gamma(t_1)\le (2s)^{1/(1-2s)}$. Thus, the property \ref{ass:zeta_3rd} follows in the case $s\ne 1/2$. 
		
		Now, we assume $s=1/2$. In this case, the third derivative of $\zeta_2$ reads
		\begin{align*}
			\frac{C}{t^3}\Big( t+2(\gamma(t)-t\gamma'(t))\ln(1/\gamma(t))+ t\frac{\gamma(t)-t\gamma'(t)}{\gamma(t)}+t^2\gamma^{\prime\prime}(t)\ln(1/\gamma(t)) \Big).
		\end{align*}
		Since $\gamma$ is concave and by \eqref{eq:modulus_second_derivative}, this is bounded from below by 
		\begin{equation*}
			\frac{C}{t^3}\big( -\gamma(t)-3t\gamma'(t) \big)\ln(1/\gamma(t))\ge -\frac{4}{t^2}\frac{C\gamma(t)\ln(1/\gamma(t))}{t}= -4 \frac{\zeta_2^\prime(t)}{t^2}.  
		\end{equation*}
		This is the desired bound and, thus, the property \ref{ass:zeta_3rd} follows also in the case $s=1/2$. \smallskip
		
		Now, we check property \ref{ass:zeta_ndecr}. If $s\ne 1/2$, then the derivative of $t\zeta_2'(t)$ equals
		\begin{align*}
			\gamma'(t)\frac{2s\gamma(t)^{2s-1}-1}{1-2s}
		\end{align*}
		which is nonnegative if $\gamma(t)\le (2s)^{1/(1-2s)}$. If $s=1/2$, then 
		\begin{align*}
			\partial_t (t\gamma'(t))= \gamma'(t)\ln(1/\gamma(t))-\gamma'(t)
		\end{align*}
		which is nonnegative if $\gamma(t)\le 1/e$. Thus, \ref{ass:zeta_ndecr} is true.\smallskip
		
		For the moment, we assume $s\ne 1/2$. We calculate the derivative of $\zeta(t)/t^{\iota}$. It equals
		\begin{align*}
			\frac{1}{t^{\iota+1}}\Big( \gamma(t)\frac{\gamma(t)^{2s-1}-1}{1-2s}-\iota\int\limits_0^t \frac{\gamma(r)^{2s-1}-\gamma(r)}{1-2s}\frac{\gamma(r)}{r}\d r \Big).
		\end{align*}
		By \eqref{eq:Z2_without_pUB_help} and \eqref{eq:modulus_decreasing_iota} for $\gamma$, we conclude
		\begin{align*}
			\iota\frac{\gamma(r)^{2s-1}-\gamma(r)}{1-2s}\frac{\gamma(r)}{r}\ge \frac{2s\gamma(r)^{2s-1}-\gamma(r)}{1-2s}\gamma'(r)=  \partial_r\big(\frac{\gamma(r)^{2s-1}-\gamma(r)}{1-2s}\gamma(r)\big).
		\end{align*}
		This bound yields $\partial_t \zeta(t)/t^{\iota}\le 0$ and, thus, the assumption \ref{ass:zeta_iota} holds for the function $\gamma_2$ in the case $s\ne 1/2$. Now, if $s=1/2$, then using \eqref{eq:modulus_decreasing_iota}
		\begin{equation*}
			\iota \frac{\gamma(r)}{r}\ln(1/\gamma(r))\ge \gamma'(r)\ln(1/\gamma(r))\ge\gamma'(r)\big(\ln(1/\gamma(r))-1\big)= \partial_r \big(\gamma(r)\ln(1/\gamma(r))\big). 
		\end{equation*}
		This yields
		\begin{align*}
			t^{\iota+1}\,\partial_t \frac{\zeta(t)}{t^{\iota}}&= \gamma(t)\ln(1/\gamma(t))- \iota \int\limits_{0}^t \frac{\gamma(r)\ln(1/\gamma(r))}{r}\d r\ge 0
		\end{align*}
		and, thus, \ref{ass:zeta_iota} holds also for $s=1/2$. \smallskip
		
		\textit{Step 2.} Finally, we extend $\zeta$ to $[0,\infty)$ such that \ref{ass:zeta_pic}, \ref{ass:zeta_ndecr}, and \ref{ass:zeta_growth} remain true. For this we refer to step 2 in the proof of \autoref{lem:zeta_with_pUB}. 
	\end{proof}
		
	The next four propositions provide super and sub-solutions function which we will use as barriers in \autoref{sec:boundary_regularity_dini}. 
	
	\begin{proposition}\label{prop:supharmonic_barrier_pUB}
		Let $D\subset \R^d$ be a bounded uniform $\cdini$-domain. Suppose \eqref{ass:pUB} holds. For any $s_0\in (0,1)$ there exist constants $\eps_0>0$, $C_1,C_2>0$ such that for any $s\in[s_0,1)$ there exists a function $b_+\in H^s(\R^d)$ satisfying 
		\begin{align*}
			A_s b_+&\ge 1 \text{ in }\{ x\in D\mid \distw{x}{D^c}<\eps_0 \},\\
			C_1\distw{\cdot}{D^c}^s&\le b_+ \le C_2\distw{\cdot}{D^c}^s.
		\end{align*}
	\end{proposition}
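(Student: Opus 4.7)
The plan is to take the barrier
\begin{equation*}
b_+(x) := \fd(x)^s\bigl(1 - c\,\zeta(\fd(x))\bigr),
\end{equation*}
where $\zeta$ is the function produced by \autoref{lem:zeta_with_pUB} with a coupling constant to be fixed below, and $c>0$ is a small parameter. This is the nonlocal analogue, in the $\cdini$-setting, of the classical $C^{1,\alpha}$-barrier $d_D^s - c\,d_D^{s+\eps}$ recalled in \autoref{sec:comparision_to_RO}, with the additive perturbation $\fd^\eps$ replaced by the multiplicative factor $\zeta(\fd)$.

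First I would verify the two-sided bound and the Sobolev regularity. Since $\zeta$ is continuous with $\zeta(0)=0$ and $D$ is bounded, $\zeta\circ\fd$ is bounded on $D$; choosing $c$ so that $c\,\|\zeta\circ\fd\|_{L^\infty(D)}\le 1/2$ gives $\tfrac12 \fd(x)^s\le b_+(x)\le \fd(x)^s$ for $x\in D$ and $b_+=0$ on $D^c$. Combined with the comparability of $\fd$ and $d_D$ in \eqref{eq:reg_distance_properties}, this yields $C_1\,d_D^s\le b_+\le C_2\,d_D^s$ for absolute constants. The $H^s(\R^d)$-membership follows from the Lipschitz continuity of $\fd$, the boundedness of $\zeta$, the support of $b_+$ being contained in the bounded Lipschitz set $D$, and the standard fact that $d_D^s\in H^s(\R^d)$ in bounded Lipschitz domains.

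The heart of the argument is the supersolution inequality, which I would obtain by combining \eqref{eq:almost_harmonic_with_pub} and \eqref{eq:barrier_main_estimate_with_pUB}. Under \eqref{ass:pUB}, \autoref{prop:almost_harmonic} gives $|A_s(\fd^s)(x)|\le C+C\,\omega(d_D(x))/d_D(x)^s$, and \autoref{prop:subsolution_d} gives
\begin{equation*}
A_s\bigl(\fd^s\zeta(\fd)\bigr)(x)\le -C_4\,d_D(x)^{1-s}\zeta'(d_D(x))+C_5+C_5\,\frac{\omega(d_D(x))}{d_D(x)^s}.
\end{equation*}
By linearity of $A_s$ and the choice of $b_+$,
\begin{equation*}
A_s b_+(x)\ge c\,C_4\,d_D(x)^{1-s}\zeta'(d_D(x)) - (C+c\,C_5)\Bigl(1+\frac{\omega(d_D(x))}{d_D(x)^s}\Bigr).
\end{equation*}
Invoking the coupling \eqref{eq:zeta_omega_coupling_pUB} with the constant of \autoref{lem:zeta_with_pUB} chosen so large that $c\,C_4\,C^\ast\ge 2(C+c\,C_5+1)$, one has, for $0<d_D(x)<t_0$,
\begin{equation*}
d_D(x)^{1-s}\zeta'(d_D(x)) = d_D(x)^{-s}\bigl(d_D(x)\zeta'(d_D(x))\bigr)\ge C^\ast\Bigl(1+\frac{\omega(d_D(x))}{d_D(x)^s}\Bigr),
\end{equation*}
so that $A_s b_+(x)\ge 1$; taking $\eps_0:=t_0\wedge (\rho/2)$ finishes the proof.

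The main obstacle, as stressed in \autoref{sec:comparision_to_RO}, is that in $\cdini$-domains no power $\fd^{s+\eps}$ absorbs the error $\omega(d_D)/d_D^s$ uniformly near $\partial D$, since $\omega(t)$ may converge to $0$ more slowly than any $t^\eps$. The function $\zeta$ from \autoref{lem:zeta_with_pUB} is engineered precisely so that $t\zeta'(t)$ dominates $t^s+\omega(t)$ near $0$; this is exactly what is required to make the negative term supplied by \autoref{prop:subsolution_d} beat the combined error terms of \autoref{prop:almost_harmonic} and \autoref{prop:subsolution_d}. Once $\zeta$ is in hand, everything reduces to bookkeeping of constants.
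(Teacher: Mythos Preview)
Your proposal is correct and takes essentially the same route as the paper: the paper also sets $b_+=\fd^s-c_1\,\fd^s\zeta(\fd)$ with $c_1=1/(2\zeta(\diam D))$, reads off the two-sided bound from this choice, invokes \cite[Lemma B.2.5]{RoFe24} for $H^s$-membership, and then combines \autoref{prop:almost_harmonic}, \autoref{prop:subsolution_d}, and the coupling \eqref{eq:zeta_omega_coupling_pUB} exactly as you do to obtain $A_sb_+\ge 1$ near $\partial D$. One small remark on your bookkeeping: the phrase ``chosen so large'' is slightly misleading, since the $\zeta$ produced by \autoref{lem:zeta_with_pUB} scales linearly in the coupling constant and hence your $c$ scales like its reciprocal, so the product $c\,C^\ast$ is essentially fixed; the paper handles this by defining $c_1$ and the coupling constant $C$ simultaneously via $C:=(c_1C_5+\tilde C+1)/(c_1C_4)$, which you should regard as a compatibility condition rather than a free large parameter.
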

	\begin{proof}
		Let $\omega$ be the function obtained from \autoref{lem:modulus} applied to the uniform modulus of continuity of the charts of $\partial D$. From \autoref{lem:zeta_with_pUB}, we deduce the existence of $\zeta$ satisfying \ref{ass:zeta_pic}-\ref{ass:zeta_growth} and \eqref{eq:zeta_omega_coupling_pUB} for the constant $C:= (c_1C_5+\tilde{C}+1)/(c_1C_4)$ where $\tilde{C}$ is taken from \autoref{prop:almost_harmonic}, $C_4, C_5$ from \autoref{prop:subsolution_d}, and $c_1:= 1/(2\zeta(\diam(D)))$. We define 
		\begin{equation*}
			b_+(x):= \fd(x)^s-c_1 \fd(x)^s\zeta(\fd(x)).
		\end{equation*}
		By the choice of $c_1$, we know $b_+>0$ in $D$ and $b_+(x)\le \fd(x)^s\le c_2^s \distw{x}{D^c}^s$ by \eqref{eq:reg_distance_properties} as well as $b_+(x)\ge 1/2 \fd(x)^s\ge c_2^{-s}/2 \, d_x^s$. Using these properties and \cite[Lemma B.2.5]{RoFe24}, we know $b_+\in H^s(\R^d)$. By \autoref{prop:almost_harmonic} and \autoref{prop:subsolution_d} and the choice of $\zeta$ and the constant $C$, we find 
		\begin{align*}
			A_s b_+(x)&\ge -(c_1C_5 +\tilde{C})\Big( \frac{\omega(d_x)}{d_x^s}+1\Big)+c_1 C_4\frac{\zeta^\prime(d_x)}{d_x^{s-1}}\ge 1.
		\end{align*}
		Here, we wrote $d_x:= \distw{x}{D^c}$ for short. 
	\end{proof}
	\begin{proposition}\label{prop:supharmonic_barrier_without_pUB}
		Let $s_0\in (0,1)$, $D\subset \R^d$ be a bounded uniform $2s_0$-$\cdini$-domain. There exist constants $\eps_0>0$, $C_1,C_2>0$ such that for any $s\in[s_0,1)$ there exists a function $b_+\in H^s(\R^d)$ satisfying 
		\begin{align*}
			A_s b_+&\ge 1 \text{ in }\{ x\in D\mid \distw{x}{D^c}<\eps_0 \},\\
			C_1\distw{\cdot}{D^c}^s&\le b_+ \le C_2\distw{\cdot}{D^c}^s.
		\end{align*}
	\end{proposition}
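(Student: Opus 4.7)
The plan is to mimic the proof of Proposition \ref{prop:supharmonic_barrier_pUB} verbatim, but to replace every invocation of a bound that relied on \eqref{ass:pUB} by its non-\eqref{ass:pUB} counterpart. The candidate barrier will again be
$$
b_+(x) := \fd(x)^s - c_1\,\fd(x)^s\,\zeta(\fd(x)),
$$
for a small constant $c_1 := 1/(2\zeta(\diam D))$ and an appropriately tailored function $\zeta$.

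The ingredients are assembled as follows. First, let $\omega$ be the modulus produced by Lemma \ref{lem:modulus} applied to the chart modulus of the uniform $2s_0$-$\cdini$-domain $D$; by construction $\omega$ satisfies \eqref{eq:dini_2s} for $s_0$, and then by Lemma \ref{lem:special_2s_0_dini_bigger_s} also for every $s \in [s_0,1)$. Next, in place of Lemma \ref{lem:zeta_with_pUB} I apply Lemma \ref{lem:zeta_without_pUB} with the constant $C := (c_1 C_5 + \tilde C + 1)/(c_1 C_4)$, where $\tilde C$ comes from \eqref{eq:almost_harmonic} and $C_4,C_5$ from Proposition \ref{prop:subsolution_d}. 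This yields $\zeta$ satisfying \ref{ass:zeta_pic}--\ref{ass:zeta_growth} together with the stronger coupling \eqref{eq:zeta_omega_coupling_without_pUB}. The pointwise bounds $C_1 d_D^s \le b_+ \le C_2 d_D^s$ and the membership $b_+ \in H^s(\R^d)$ follow exactly as in the \eqref{ass:pUB} case via \eqref{eq:reg_distance_properties} and \cite[Lemma B.2.5]{RoFe24}.

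For the super-solution inequality, I combine the non-\eqref{ass:pUB} estimate \eqref{eq:almost_harmonic} from Proposition \ref{prop:almost_harmonic} on $A_s(\fd^s)$ with \eqref{eq:barrier_main_estimate_without_pUB} from Proposition \ref{prop:subsolution_d} on $A_s[\fd^s\zeta(\fd)]$. These produce, in addition to the baseline $1 + \omega(d_D(x))/d_D(x)^s$ that was already handled in the \eqref{ass:pUB} case, the extra term
$$
\frac{\omega(f^{-1}(d_D(x)))}{d_D(x)^s}\cdot\begin{cases} \dfrac{\omega(f^{-1}(d_D(x)))^{2s-1}-1}{1-2s}, & s\neq 1/2,\\[3pt] \ln\!\big(1/\omega(f^{-1}(d_D(x)))\big), & s=1/2.\end{cases}
$$
The whole point of Lemma \ref{lem:zeta_without_pUB} is that the negative gain $-c_1 C_4\,d_D(x)^{1-s}\zeta'(d_D(x))$ obtained from \eqref{eq:barrier_main_estimate_without_pUB} dominates this additional term as well, precisely by the right-hand side of \eqref{eq:zeta_omega_coupling_without_pUB}. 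Choosing $\eps_0$ small enough that $\{d_D<\eps_0\}$ lies in the range where Lemmas \ref{lem:zeta_without_pUB} and \ref{lem:intermediate_estimate_1} are applicable, all error terms from Propositions \ref{prop:almost_harmonic} and \ref{prop:subsolution_d} are absorbed, yielding $A_s b_+ \ge 1$ in that neighbourhood.

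The main obstacle I foresee is purely bookkeeping: verifying that the constant $C$ and the threshold $t_0$ prescribed by Lemma \ref{lem:zeta_without_pUB} can be chosen uniformly in $s \in [s_0,1)$. Inspection of \eqref{eq:t1_choice} shows this is the case, since $a_s$ is bounded below in terms of $s_0$ and $\iota$ only. There is no new analytic difficulty beyond what is already encoded in Propositions \ref{prop:almost_harmonic} and \ref{prop:subsolution_d} and Lemma \ref{lem:zeta_without_pUB}; in particular, the extra strength of the $2s_0$-Dini assumption on $\partial D$ enters solely through the fact that $\omega$ remains integrable against the $E_s$-factor appearing in \eqref{eq:barrier_main_estimate_without_pUB}.
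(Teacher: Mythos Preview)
Your proposal is correct and follows precisely the paper's own approach: the paper's proof is a one-liner stating that one repeats the argument of Proposition~\ref{prop:supharmonic_barrier_pUB} verbatim, replacing Lemma~\ref{lem:zeta_with_pUB} by Lemma~\ref{lem:zeta_without_pUB} and invoking Lemma~\ref{lem:special_2s_0_dini_bigger_s} for the uniformity in $s$. You have simply expanded this sketch with the relevant details, including the handling of the additional error term from \eqref{eq:almost_harmonic} and \eqref{eq:barrier_main_estimate_without_pUB} via the coupling \eqref{eq:zeta_omega_coupling_without_pUB}.
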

	\begin{proof}
		The proof uses the same arguments as the previous one for \autoref{prop:supharmonic_barrier_pUB} but instead of \autoref{lem:zeta_with_pUB} we use \autoref{lem:zeta_without_pUB}. Here, note that \autoref{lem:special_2s_0_dini_bigger_s}.
	\end{proof}

	The following two propositions provide appropriate sub-solutions which will be used as barriers to prove a Hopf lemma. 
	
	\begin{proposition}\label{prop:subharmonic_barrier_pUB}
		Let $D\subset \R^d$ be a bounded uniform $\cdini$-domain. Suppose \eqref{ass:pUB} holds. For any $s_0\in (0,1)$ there exist constants $\eps_0>0$, $C_1,C_2>0$ such that for any $s\in[s_0,1]$ there exists a function $b_-\in H^s(\R^d)$ satisfying 
		\begin{align*}
			A_s b_-&\le -1 \text{ in }\{ x\in D\mid \distw{x}{D^c}<\eps_0 \},\\
			C_1\distw{\cdot}{D^c}^s&\le b_+ \le C_2\distw{\cdot}{D^c}^s.
		\end{align*}
	\end{proposition}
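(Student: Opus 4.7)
The plan is to mirror the construction used in \autoref{prop:supharmonic_barrier_pUB}, but with the sign of the perturbation reversed so that both the ``almost harmonic'' term and the genuine sub-solution term push $A_s b_-$ downward. Concretely, I will let $\omega$ be the modulus of continuity produced by \autoref{lem:modulus} applied to the uniform chart regularity of $\partial D$, invoke \autoref{lem:zeta_with_pUB} to obtain a function $\zeta$ satisfying \ref{ass:zeta_pic}--\ref{ass:zeta_growth} together with the coupling \eqref{eq:zeta_omega_coupling_pUB} for a large constant $C$ (to be fixed below), and define
\begin{equation*}
	b_-(x) := \fd(x)^s + c_1\,\fd(x)^s\,\zeta(\fd(x)),
\end{equation*}
where $c_1 > 0$ is an arbitrary fixed constant (e.g.\ $c_1 = 1$). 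With the plus sign no positivity constraint on $c_1$ is needed: the bounds \eqref{eq:reg_distance_properties} together with \ref{ass:zeta_pic} and \ref{ass:zeta_growth} immediately give $C_1 d_D(x)^s \le b_-(x) \le C_2 d_D(x)^s$, and membership in $H^s(\R^d)$ follows from \cite[Lemma B.2.5]{RoFe24} exactly as in the super-solution case.

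For the differential inequality, I apply $A_s$ term-by-term. By \autoref{prop:almost_harmonic} under \eqref{ass:pUB}, we have the two-sided bound $|A_s\fd^s(x)| \le \tilde{C}\bigl(1 + \omega(d_D(x))/d_D(x)^s\bigr)$ on $D\cap B_1$, which in particular yields the upper bound $A_s\fd^s(x) \le \tilde{C}\bigl(1 + \omega(d_D(x))/d_D(x)^s\bigr)$. By \autoref{prop:subsolution_d} under \eqref{ass:pUB},
\begin{equation*}
	A_s[\fd^s\zeta(\fd)](x) \le -C_4\,d_D(x)^{1-s}\zeta'(d_D(x)) + C_5\Bigl(1 + \frac{\omega(d_D(x))}{d_D(x)^s}\Bigr).
\end{equation*}
Adding these two estimates,
\begin{equation*}
	A_s b_-(x) \le (\tilde{C} + c_1 C_5)\Bigl(1 + \frac{\omega(d_D(x))}{d_D(x)^s}\Bigr) - c_1 C_4\, d_D(x)^{1-s}\zeta'(d_D(x)).
\end{equation*}

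The final step is to absorb the positive error terms. The coupling \eqref{eq:zeta_omega_coupling_pUB} from \autoref{lem:zeta_with_pUB} reads $d_D(x)\,\zeta'(d_D(x)) \ge C\bigl(d_D(x)^s + \omega(d_D(x))\bigr)$, which is equivalent to $d_D(x)^{1-s}\zeta'(d_D(x)) \ge C\bigl(1 + \omega(d_D(x))/d_D(x)^s\bigr)$. Choosing the coupling constant
\begin{equation*}
	C \ge \frac{\tilde{C} + c_1 C_5 + 1}{c_1 C_4}
\end{equation*}
at the outset when invoking \autoref{lem:zeta_with_pUB}, I obtain $A_s b_-(x) \le -1$ whenever $d_D(x) < \eps_0$, with $\eps_0$ chosen small enough that all the preceding estimates (which are only established near the boundary) apply.

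I do not anticipate genuine obstacles: the argument is structurally identical to the proof of \autoref{prop:supharmonic_barrier_pUB}, and all the analytic work has already been absorbed into \autoref{prop:almost_harmonic}, \autoref{prop:subsolution_d}, and \autoref{lem:zeta_with_pUB}. The only subtlety is to keep the direction of the inequality straight: the sign flip from $b_+$ to $b_-$ does \emph{not} make $A_s \fd^s$ help us, but the strong negativity coming from $A_s[\fd^s\zeta(\fd)]$ is of order $d_D^{1-s}\zeta'(d_D)$, which by \eqref{eq:zeta_omega_coupling_pUB} dominates both the Dini error $\omega(d_D)/d_D^s$ and the constant $1$ as soon as $C$ is large enough in the construction of $\zeta$.
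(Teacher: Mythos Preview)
Your proposal is correct and follows essentially the same approach as the paper: the paper's proof simply states that one repeats the argument of \autoref{prop:supharmonic_barrier_pUB} with $b_- := \fd^s + \fd^s\zeta(\fd)$, which is exactly your construction (with $c_1=1$). Your observation that the plus sign removes the need to calibrate $c_1$ against $\zeta(\diam(D))$ is a minor simplification over the mirror argument.
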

	\begin{proof}
		 The proof follows the same lines as the proof of \autoref{prop:supharmonic_barrier_pUB} but instead of $b_+$ we consider
		\begin{equation*}
			b_-:=\fd^s+\fd^s\zeta(\fd).
		\end{equation*} 
	\end{proof}
	\begin{proposition}\label{prop:subharmonic_barrier_without_pUB}
		Let $s_0\in (0,1)$, $D\subset \R^d$ be a bounded uniform $2s_0$-$\cdini$-domain. There exist constants $\eps_0>0$, $C_1,C_2>0$ such that for any $s\in[s_0,1)$ there exists a function $b_-\in H^s(\R^d)$ satisfying 
		\begin{align*}
			A_s b_-&\le -1 \text{ in }\{ x\in D\mid \distw{x}{D^c}<\eps_0 \},\\
			C_1\distw{\cdot}{D^c}^s&\le b_+ \le C_2\distw{\cdot}{D^c}^s.
		\end{align*}
	\end{proposition}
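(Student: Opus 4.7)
The plan is to mirror the proof of \autoref{prop:subharmonic_barrier_pUB} step by step, replacing every ingredient tied to \eqref{ass:pUB} by its pUB-free counterpart. I would apply \autoref{lem:modulus} (with, say, $\iota:=\min\{s_0,1/3\}/2$) to the uniform modulus of continuity of the boundary charts of $D$, and via \autoref{rem:replace_modulus} assume that $D$ already has the resulting modulus $\omega$. Since $D$ is a $2s_0$-$\cdini$-domain, \autoref{lem:special_2s_0_dini_bigger_s} upgrades \eqref{eq:dini_2s} for $\omega$ to every $s\in[s_0,1)$, which is precisely the hypothesis required to invoke \autoref{lem:zeta_without_pUB}. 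With $\tilde C$ from \autoref{prop:almost_harmonic} and $C_4,C_5$ from \autoref{prop:subsolution_d}, I fix the coupling constant $C:=(\tilde C+C_5+1)/C_4$ and apply \autoref{lem:zeta_without_pUB} with this $C$ to produce a $\zeta$ satisfying \ref{ass:zeta_pic}--\ref{ass:zeta_growth} together with \eqref{eq:zeta_omega_coupling_without_pUB}.

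I then set, exactly as in the proof of \autoref{prop:subharmonic_barrier_pUB},
\begin{equation*}
    b_-:=\fd^{\,s}+\fd^{\,s}\,\zeta(\fd).
\end{equation*}
The two-sided comparison $C_1\,d_D^{\,s}\le b_-\le C_2\,d_D^{\,s}$ follows from \eqref{eq:reg_distance_properties} and the boundedness of $\zeta$ on $[0,C_1\diam(D)]$; membership in $H^s(\R^d)$ follows from \cite[Lemma B.2.5]{RoFe24}, as in the pUB case.

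For the differential inequality, I apply \eqref{eq:almost_harmonic} to $\fd^{\,s}$ and \eqref{eq:barrier_main_estimate_without_pUB} to $\fd^{\,s}\zeta(\fd)$ and add. Writing $d_x:=d_D(x)$ and denoting by $\Psi_s(d_x)$ the $s\ne1/2$ / $s=1/2$ alternative that appears identically in both bounds, summation gives
\begin{align*}
    A_s b_-(x)\le (\tilde C+C_5)\Big(1+\tfrac{\omega(d_x)}{d_x^{\,s}}+\tfrac{\omega(f^{-1}(d_x))}{d_x^{\,s}}\Psi_s(d_x)\Big)-C_4\,d_x^{1-s}\zeta'(d_x).
\end{align*}
The coupling \eqref{eq:zeta_omega_coupling_without_pUB} rewritten as $d^{1-s}\zeta'(d)\ge C\big(1+\tfrac{\omega(d)}{d^{\,s}}+\tfrac{\omega(f^{-1}(d))}{d^{\,s}}\Psi_s(d)\big)$ together with the choice of $C$ turns this into
\begin{equation*}
    A_s b_-(x)\le -\Big(1+\tfrac{\omega(d_x)}{d_x^{\,s}}+\tfrac{\omega(f^{-1}(d_x))}{d_x^{\,s}}\Psi_s(d_x)\Big)\le -1
\end{equation*}
whenever $d_x<\eps_0$, for some $\eps_0>0$ smaller than both the localization radius from \autoref{prop:almost_harmonic} and the threshold from \autoref{prop:subsolution_d}.

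The only real work is the bookkeeping check that the three "bad" quantities on the right-hand sides of \eqref{eq:almost_harmonic} and \eqref{eq:barrier_main_estimate_without_pUB} are matched termwise by $d^{1-s}\zeta'(d)$ through \eqref{eq:zeta_omega_coupling_without_pUB}; but this is precisely what \autoref{lem:zeta_without_pUB} was designed to guarantee, so no additional technical obstacle arises and the conclusion follows.
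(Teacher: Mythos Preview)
Your proposal is correct and follows essentially the same route as the paper: the paper's proof is a one-liner pointing back to \autoref{prop:supharmonic_barrier_without_pUB} with $b_-:=\fd^s+\fd^s\zeta(\fd)$ in place of $b_+$, and you have simply unpacked that reference by combining \autoref{lem:zeta_without_pUB} (via \autoref{lem:special_2s_0_dini_bigger_s}) with \autoref{prop:almost_harmonic} and \autoref{prop:subsolution_d}. The termwise bookkeeping you describe is exactly what the paper intends, and your constant $C=(\tilde C+C_5+1)/C_4$ is the natural choice for the coefficient $1$ in front of $\fd^s\zeta(\fd)$.
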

	\begin{proof}
		The proof follows the same lines as the proof of \autoref{prop:supharmonic_barrier_without_pUB} but instead of $b_+$ we consider
		\begin{equation*}
			b_-:=\fd^s+\fd^s\zeta(\fd).
		\end{equation*} 
	\end{proof}

	\section{Boundary estimates}\label{sec:boundary_regularity_dini}
	In this section, we prove both the $C^s$-boundary regularity, see \autoref{th:boundary_regularity} and \autoref{th:boundary_regularity_without_pUB}, and the Hopf boundary lemma, see \autoref{sec:hopf}. We combine the barriers build in \autoref{sec:higher_dimensional} with the interior regularity from \autoref{sec:interior}.
	
	\subsection{Boundary regularity}
	The first step is an upper bound on $\abs{u}$ in terms of the distance to a boundary point. This is done in the next proposition. The main ingredient are the barriers constructed in \autoref{prop:supharmonic_barrier_pUB} respectively \autoref{prop:supharmonic_barrier_without_pUB} combined with a maximum principle. 
	\begin{proposition}\label{prop:distance_upper_bound_with_pUB}
		Let $\Omega \subset \R^d$ be an open set such that $\Omega\cap B_2$ satisfies the exterior $\cdini$-property at a boundary point $z\in B_{1/2}\cap \partial \Omega$. Let $s_0\in (0,1)$. Assume \eqref{ass:pUB} or $s_0>1/2$. There exists a positive constant $C$ such that for any $f\in L^\infty(\Omega\cap B_1)$, any $s\in [s_0,1)$, and any weak solution $u$ to 
		\begin{equation}\label{eq:main_equation_stable_localized}
			\begin{split}
				A_\mu^s u &= f \text{ in }\Omega\cap B_{1},\\
				u &= 0 \text{ on }\Omega^c \cap B_{2},
			\end{split}
		\end{equation}
		 the solution $u$ satisfies the bound
		\begin{equation}\label{eq:distance_upper_bound}
			\abs{u(x)}\le C\, \big( \norm{f}_{L^\infty}+\sup_{y\in B_1}\tail_{\nu_s}(u;y)+\norm{u}_{L^\infty(B_2)}\big) \abs{x-z}^s \quad\text{ for  } x\in B_1,
		\end{equation}
		where the tail-term is defined by \eqref{eq:tail}.
	\end{proposition}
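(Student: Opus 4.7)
The plan is to dominate $|u|$ by a constant multiple of the supersolution barrier $b_+$ furnished by \autoref{sec:higher_dimensional} on a fixed neighborhood of $z$, and then to deduce the $|x-z|^s$ bound from $b_+ \lesssim d_D^s \le |x-z|^s$ together with the trivial bound $|u| \le \|u\|_{L^\infty(B_2)}$ away from $z$.

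First I would appeal to the exterior $\cdini$-property at $z$ to fix a uniform $\cdini$-domain $D \supset \Omega \cap B_2$ with $\partial D \cap \partial(\Omega\cap B_2) = \{z\}$; in particular $z \in \partial D$, which forces $d_D(x) \le |x-z|$ for every $x$. Under \eqref{ass:pUB} or $s_0 > 1/2$ (in the latter case \autoref{rem:special_dini_s_ge_12} identifies $\cdini$- and $2s$-$\cdini$-domains for $s \ge s_0$), \autoref{prop:supharmonic_barrier_pUB} respectively \autoref{prop:supharmonic_barrier_without_pUB} then supplies $b_+ \in H^s(\R^d)$ satisfying $A_\mu^s b_+ \ge 1$ on $\{x \in D : d_D(x) < \eps_0\}$ and $C_1 d_D^s \le b_+ \le C_2 d_D^s$, with constants uniform in $s\in[s_0,1)$ and $\mu$. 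By linearity I would normalize $M := \|f\|_{L^\infty(\Omega\cap B_1)} + \sup_{y\in B_1}\tail_{\nu_s}(u;y) + \|u\|_{L^\infty(B_2)} = 1$ and pass to the truncation $\tilde u := u\,\mathds{1}_{B_2}$. Splitting the principal value defining $A_\mu^s u$ at $x\in\Omega\cap B_1$ according to whether $x+h\in B_2$ or not yields $A_\mu^s \tilde u = f + g$ in $\Omega\cap B_1$ with $|g(x)| \le \sup_{y\in B_1}\tail_{\nu_s}(u;y) \le 1$, whence $\|A_\mu^s \tilde u\|_{L^\infty(\Omega\cap B_1)} \le 1$ and $|\tilde u| \le 1$ everywhere, with $\tilde u \equiv 0$ off $\Omega \cap B_2$.

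Next I would fix $\rho := \min(1/4,\eps_0)$ and observe that the compact set $K := \overline{\Omega \cap B_2} \setminus B_\rho(z)$ is disjoint from $\partial D$ (since $z \notin K$ and $\partial D$ meets $\overline{\Omega\cap B_2}$ only at $z$) while being contained in $\overline D$; therefore $c_\rho := \inf_K d_D > 0$ by continuity. Set $C_0 := \max\bigl(1, (C_1 c_\rho^s)^{-1}\bigr)$ and define the supersolution $w := C_0 b_+$ on $E := \Omega \cap B_\rho(z)$. On $E$ one has $d_D(x) \le |x-z| < \rho \le \eps_0$, hence $A_\mu^s w \ge C_0 \ge 1 \ge |A_\mu^s \tilde u|$. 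For the boundary data on $E^c$: $\tilde u$ vanishes on $\Omega^c \cup B_2^c$ while on $\Omega\cap B_2 \setminus B_\rho(z) \subset K$ one has $w \ge C_0 C_1 c_\rho^s \ge 1 \ge |\tilde u|$. The maximum principle for weak/continuous distributional solutions (see \autoref{rem:solution_concept_main_results} and \cite[Lemma 2.3.5]{RoFe24}) applied to $w \pm \tilde u$ on $E$ then gives $|\tilde u| \le w$ on $E$.

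Finally, for $x \in E$ this reads $|u(x)| \le C_0 b_+(x) \le C_0 C_2 |x-z|^s$, and for $x \in B_1 \setminus B_\rho(z)$ the trivial bound yields $|u(x)| \le \|u\|_{L^\infty(B_2)} \le \rho^{-s}|x-z|^s$ (with $|u|\equiv 0$ on $\Omega^c\cap B_2$); combining and undoing the normalization $M=1$ produces \eqref{eq:distance_upper_bound}. The main obstacle I anticipate is the geometric compactness step that produces the lower bound $c_\rho$: one must use the fact that $D$ touches $\overline{\Omega\cap B_2}$ only at $z$ to convert the near-boundary barrier $b_+$ into a genuine comparison function on all of $E^c$. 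Everything else---the tail-truncation of $\tilde u$ and the maximum principle---is routine and parallels \cite[Section B.2]{RoFe24}.
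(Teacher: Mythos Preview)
Your proposal is correct and follows essentially the same strategy as the paper: truncate $u$ to localize, scale the barrier $b_+$ from \autoref{prop:supharmonic_barrier_pUB} (respectively \autoref{prop:supharmonic_barrier_without_pUB} via \autoref{rem:special_dini_s_ge_12}), and apply the weak maximum principle. The only differences are cosmetic: the paper uses a smooth cutoff $\phi\in C_c^\infty(B_2)$ rather than your sharp $\mathds{1}_{B_2}$, and it bypasses your compactness step entirely by scaling $b_+$ with the explicit factor $\|\overline u\|_{L^\infty}/(C_1\eps_0^s)$, so that $\psi\ge\|\overline u\|_{L^\infty}$ automatically wherever $d_{D_z}\ge\eps_0$---this yields the comparison on the complement without having to extract a positive lower bound $c_\rho$ for $d_D$ on $K$.
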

	\begin{remark}
		The \autoref{prop:distance_upper_bound_with_pUB} is known to be false for Lipschitz domains. This can be seen using the theory on $s$-harmonic functions on infinite cones, see \cite{BaBo04} and \cite[Lemma 3.3]{Mic06}.
	\end{remark}
	\begin{proof}
		If the right-hand side of \eqref{eq:distance_upper_bound} is not finite, then the bound is trivial. We assume the contrary. We fix a smooth cutoff function $\phi$ with $\phi=1$ in $B_{3/2}$ and $\phi=0$ on $B_{2}^c$. Further, we define $\overline{u}:= u\phi$. Then, $\overline{u}$ satisfies
		\begin{equation}\label{eq:truncated_equation}
			A_\mu^s\overline{u}(x)= f(x)+\int\limits_{\{-x\}+B_{3/2}^c} (1-\phi(x+h))u(x+h)\nu_s(\d h) \text{ in }\Omega \cap B_{1}\text{ in the weak sense.}
		\end{equation}
		Let $D_z$ be the $\cdini$-domain adapted to $z$ and $\Omega$ by the exterior $\cdini$-property. Further, let $b_+$ be the function and $\eps_0, C_1, C_2$ be the constants from \autoref{prop:supharmonic_barrier_pUB} if \eqref{ass:pUB} holds and the function from \autoref{prop:supharmonic_barrier_without_pUB} if \eqref{ass:pUB} is not true but $s_0>1/2$ holds. We define 
		\begin{align*}
			\psi:= \big( \norm{f}_{L^\infty}+ \sup_{y\in B_1}\tail_{\nu_s}(u;y)+\frac{\norm{\overline{u}}_{L^\infty}}{C_1\eps_0^s} \big)b_+.
		\end{align*}
		For $x\in \R^d$ such that $\distw{x}{D_z^c}\ge \eps_0$ or $x\in \Omega^c\cap B_2$, we know
		\begin{align*}
			\psi(x)\ge \frac{\distw{x}{D_z^c}^s}{\eps_0^s}\norm{\overline{u}}_{L^\infty}\ge \overline{u}(x)
		\end{align*}
		by \autoref{prop:supharmonic_barrier_pUB} or \autoref{prop:supharmonic_barrier_without_pUB} respectively. Using the same propositions and \eqref{eq:truncated_equation}, we find in $B_1\cap \Omega$
		\begin{equation*}
			A_\mu^s[\overline{u}-\psi](x)\le 0 
		\end{equation*}
		in $B_1\cap \Omega$ in the weak sense. Thus, the maximum principle for weak solution, see e.g.\ \cite[Theorem 5.1]{Rut18}, yields for any $x\in B_1$
		\begin{align*}
			u(x)&=\overline{u}(x)\le \psi(x)\le C_2 \big( \norm{f}_{L^\infty}+ \sup_{y\in B_1}\tail_{\nu_s}(u;y)+\frac{\norm{\overline{u}}_{L^\infty}}{C_1\eps_0^s} \big)\distw{x}{D_z^c}^s\\
			&\le C\big( \norm{f}_{L^\infty}+ \sup_{y\in B_1}\tail_{\nu_s}(u;y)+\frac{\norm{\overline{u}}_{L^\infty}}{C_1\eps_0^s} \big)\abs{x-z}^s.
		\end{align*}
		By symmetry, we may repeat this procedure for $-u$ instead of $u$.
	\end{proof}

	\begin{proposition}\label{prop:distance_upper_bound_without_pUB}
		Let $s_0\in (0,1)$. We fix an open set $\Omega \subset \R^d$ such that $\Omega\cap B_2$ satisfies the exterior $2s_0$-$\cdini$-property with the same modulus of continuity at a boundary point $z\in B_{1/2}\cap \partial \Omega$. There exists a positive constant $C$ such that for any $f\in L^\infty(\Omega\cap B_1)$, any $s\in [s_0,1)$, and any weak solution $u$ to \eqref{eq:main_equation_stable_localized} the estimate \eqref{eq:distance_upper_bound} holds.
	\end{proposition}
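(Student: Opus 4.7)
The plan is to mimic the proof of \autoref{prop:distance_upper_bound_with_pUB} almost verbatim, with the sole substantive change being the barrier that is used. First, I would dispense with the trivial case where the right-hand side of \eqref{eq:distance_upper_bound} is infinite, and then truncate $u$ by a smooth cutoff $\phi\equiv 1$ on $B_{3/2}$ supported in $B_2$; the truncated function $\overline{u}:=u\phi$ satisfies the analog of \eqref{eq:truncated_equation} on $B_1\cap \Omega$ in the weak sense, with an additional forcing contribution bounded by $\sup_{y\in B_1}\tail_{\nu_s}(u;y)$.

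Next, I would invoke the exterior $2s_0$-$\cdini$-property to obtain a $2s_0$-$\cdini$-domain $D_z\supset \Omega\cap B_2$ touching $\partial\Omega$ only at $z$. By \autoref{lem:special_2s_0_dini_bigger_s}, $D_z$ is automatically a $2s$-$\cdini$-domain for every $s\in[s_0,1)$ with the same modulus of continuity $\omega$. Hence \autoref{prop:supharmonic_barrier_without_pUB} supplies a super-solution $b_+\in H^s(\R^d)$ with $A_\mu^s b_+\ge 1$ on $\{x\in D_z\mid d_{D_z}(x)<\eps_0\}$, together with the two-sided pointwise bound $C_1 d_{D_z}^s\le b_+\le C_2 d_{D_z}^s$, all constants $\eps_0,C_1,C_2$ being independent of $s\in[s_0,1)$.

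Setting
\begin{equation*}
    \psi:= \Big( \norm{f}_{L^\infty}+\sup_{y\in B_1}\tail_{\nu_s}(u;y)+\tfrac{\norm{\overline{u}}_{L^\infty}}{C_1\eps_0^s} \Big) b_+,
\end{equation*}
I would verify, exactly as in the proof of \autoref{prop:distance_upper_bound_with_pUB}, that $\psi\ge \overline{u}$ wherever $d_{D_z}(x)\ge \eps_0$ or $x\in \Omega^c\cap B_2$ (using the lower bound $b_+\ge C_1 d_{D_z}^s$), while on $B_1\cap\Omega$ one has $A_\mu^s[\overline{u}-\psi]\le 0$ weakly. The weak maximum principle (e.g.\ \cite[Theorem 5.1]{Rut18}) then yields $\overline{u}\le \psi$ on $\R^d$, and the bound $d_{D_z}(x)\le \abs{x-z}$ converts this into the desired estimate $u(x)\le C(\cdots)\abs{x-z}^s$ on $B_1$; applying the same argument to $-u$ gives the two-sided inequality \eqref{eq:distance_upper_bound}.

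No genuinely new difficulty enters compared with the pUB case: the only point requiring care is ensuring that the uniform-in-$s$ constants in \autoref{prop:supharmonic_barrier_without_pUB} remain available when $\Omega$ is only assumed to satisfy the exterior $2s_0$-$\cdini$-property, and this is guaranteed precisely by \autoref{lem:special_2s_0_dini_bigger_s}. Since $b_+$ and hence $\psi$ are built from $A_\mu^s$-super-solutions rather than relying on \eqref{ass:pUB}, the argument goes through without invoking the pointwise kernel bound.
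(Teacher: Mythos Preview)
Your proposal is correct and follows essentially the same approach as the paper: the paper's proof simply notes that, after invoking \autoref{lem:special_2s_0_dini_bigger_s}, one repeats the argument of \autoref{prop:distance_upper_bound_with_pUB} with the barrier from \autoref{prop:supharmonic_barrier_without_pUB} in place of \autoref{prop:supharmonic_barrier_pUB}. Your write-up expands exactly this outline with no substantive deviation.
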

	\begin{proof}
		After \autoref{lem:special_2s_0_dini_bigger_s}, the proof uses the same ideas as the one of \autoref{prop:distance_upper_bound_with_pUB} but with \autoref{prop:supharmonic_barrier_without_pUB} instead of \autoref{prop:supharmonic_barrier_pUB}. 
	\end{proof}

	The next lemma yields an global $L^\infty$-bound. This result is well known, see e.g.\ \cite[Lemma 2.3.9, Lemma 2.3.10]{RoFe24}, but note it is essential for us that the constant in \eqref{eq:Linfty_bound} depends on $s$ only through a lower bound $s_0\le s$. For this reason, we provide the proof here.
	
	\begin{lemma}[$L^\infty$-bound]\label{lem:Linfty_bound}
		Let $s_0\in (0,1)$, $\Omega\subset \R^d$ be bounded, and $0<\lambda\le \Lambda<\infty$. There exists a positive constant $C=C(s_0,\lambda,\Lambda, d)$ such that for any $f\in L^\infty(\Omega)$, $s_0\le s<1$, and a weak solution $u$ to \eqref{eq:main_equation_stable} the function $u$ satisfies the bound
		\begin{equation}\label{eq:Linfty_bound}
			\norm{u}_{L^\infty(\R^d)}\le C(\diam(\Omega))^{2s}\norm{f}_{L^\infty(\Omega)}.
		\end{equation}
	\end{lemma}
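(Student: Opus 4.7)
The strategy is to construct a uniform super-solution on a slab containing $\Omega$ and conclude by a comparison argument. Translating, we may assume that $0 \in \Omega$, so that $\Omega \subset B_R$ with $R = \diam(\Omega)$. In particular, $\Omega$ lies in the slab $S_R := \{x \in \R^d : |x_1| < R\}$, and $u$ vanishes off $\Omega$. Accordingly, I will look for a barrier built out of the one-dimensional function
\begin{equation*}
  w(x) := (R^2 - x_1^2)_+^s,
\end{equation*}
which is nonnegative on $\R^d$, vanishes outside $S_R$, and satisfies $w \le R^{2s}$.

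The main step is to show that there is a constant $c_0 = c_0(s_0,\lambda) > 0$ such that
\begin{equation*}
  A_\mu^s w(x) \ge c_0 \quad \text{for every } x \in S_R.
\end{equation*}
To prove this, I would use the one-dimensional representation
\begin{equation*}
  A_\mu^s w(x) = \int_{S^{d-1}} (-\Delta)_{\R}^{s}\bigl[\,t \mapsto w(x + t\theta)\,\bigr](0)\,\mu(\d\theta),
\end{equation*}
which follows directly from the definition \eqref{eq:stable_op}. Since $w(x + t\theta) = (R^2 - (x_1 + t\theta_1)^2)_+^s$ depends on $t$ only through the affine combination $x_1 + t\theta_1$, the scaling identity $(-\Delta)_\R^s[u(c\,\cdot)](y) = |c|^{2s}(-\Delta)_\R^s u(cy)$ together with the well-known explicit computation
\begin{equation*}
  (-\Delta)_\R^s\bigl[(1-t^2)_+^s\bigr](t_0) = \kappa_s \quad \text{for all } t_0 \in (-1,1),
\end{equation*}
yields $(-\Delta)_\R^s[t \mapsto w(x+t\theta)](0) = |\theta_1|^{2s}\,\kappa_s$ whenever $x \in S_R$. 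The nondegeneracy assumption \eqref{eq:non-degenerate} (applied to $w = e_1$) then gives
\begin{equation*}
  A_\mu^s w(x) \ge \lambda\,\kappa_s \quad \text{for } x \in S_R.
\end{equation*}
The main obstacle is verifying that $\kappa_s$, which already absorbs the $(1-s)$-factor from the operator's normalization, stays bounded below as $s \to 1-$; this follows from the elementary limit $\kappa_s \to -\tfrac{1}{2}\partial_t^2(1-t^2)|_{t=0} = 1$ and continuity of $s \mapsto \kappa_s$, so that $\kappa_s \ge c(s_0) > 0$ uniformly on $[s_0,1)$.

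With the super-solution in hand, set $v(x) := \|f\|_{L^\infty(\Omega)}\,(\lambda\kappa_s)^{-1}\,w(x)$. Then $A_\mu^s v \ge \|f\|_{L^\infty(\Omega)} \ge f$ in $\Omega$ in the pointwise sense, and $v \ge 0 = u$ on $\Omega^c$. After a standard mollification argument to rigorously compare within the weak formulation (or by invoking the weak maximum principle from \cite[Theorem 5.1]{Rut18} applied to a smooth truncation of $w$), I conclude $u \le v$ on $\R^d$. Repeating the argument with $-u$ gives $|u| \le v$, hence
\begin{equation*}
  \|u\|_{L^\infty(\R^d)} \le \frac{R^{2s}}{\lambda\kappa_{s}}\|f\|_{L^\infty(\Omega)} \le C(s_0,\lambda)\,(\diam \Omega)^{2s}\,\|f\|_{L^\infty(\Omega)},
\end{equation*}
which is \eqref{eq:Linfty_bound}. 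Since the only $s$-dependent quantity in the constant is $\kappa_s^{-1}$, the estimate is robust as $s \to 1-$, as required.
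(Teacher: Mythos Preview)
Your proof is correct and takes a somewhat different route from the paper's. The paper uses the radial barrier $\psi(x)=(1-|x|^2)_+$ (without the $s$-th power), computes $A_\mu^s\psi(0)=\mu(S^{d-1})/s\ge\lambda$ only at the origin, and then invokes a continuity argument to obtain a small radius $r_1=r_1(s_0,\lambda,\Lambda,d)$ on which $A_\mu^s\psi\ge\lambda/2$; the scaled function $\psi(\cdot/R)$ with $R=\diam(\Omega)/r_1$ then serves as the comparison barrier. Your choice $w(x)=(R^2-x_1^2)_+^s$ instead exploits the exact torsion identity $(-\Delta)^s_\R[(1-t^2)_+^s]\equiv\kappa_s$ on the whole interval $(-1,1)$, so no continuity step is needed and the nondegeneracy hypothesis \eqref{eq:non-degenerate} enters directly rather than via the cruder consequence $\mu(S^{d-1})\ge\lambda$. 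The trade-off is that your barrier depends on $s$ and you must verify that $\kappa_s$ stays bounded below on $[s_0,1)$, which you do via the limit $\kappa_s\to 1$. Both approaches give a robust constant; yours is arguably cleaner because it avoids the uniformity-in-$(\mu,s)$ issue implicit in the paper's continuity step, and it even removes the dependence of $C$ on $\Lambda$.
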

	\begin{proof}
		We prove the result in two step. Without loss of generality we assume $0\in \Omega$.\smallskip
		
		\textit{Step 1.} We construct a super-solution. We define $\psi(x):=(1-x^2)_+$ similar to \cite[Lemma 2.3.10]{RoFe24}. This function satisfies 
		\begin{align*}
			A_{\mu}^s\psi(0)&= \mu(S^{d-1})2(1-s)\Big(\int\limits_{0}^1 r^{1-2s}\d r + \int\limits_{1}^{\infty}r^{-1-2s}\d r\Big)= \frac{\mu(S^{d-1})}{s}\ge \lambda.
		\end{align*} 
		Since $A_{\mu}^s \psi$ is continuous, which is due to the regularity and boundedness of $\psi$, we find a positive radius $r_1=r_1(s_0,\lambda,\Lambda,d)$ such that for any $x\in \R^d$ with $\abs{x}\le r_1$ 
		\begin{equation}\label{eq:Linfty_barrier_1}
			A_\mu^s \psi(x)\ge  \frac{\lambda}{2}.
		\end{equation}
		Now, we define a scaled version of $\psi$, $\psi_R(x):= c_1\psi_1(x/R)$ for $R>0$, $c_1>0$. This function satisfies
		\begin{align*}
			A_{\mu}^s \psi_R(x)= c_1R^{-2s}A_{\mu}^s\psi_1(x/R)\ge c_1\frac{R^{-2s}\lambda}{2}
		\end{align*}
		for any $x\in B_{Rr_1}(0)$ by \eqref{eq:Linfty_barrier_1}.\smallskip
		
		\textit{Step 2.} We compare the super-solution from step 1 with $u$. Fix $R:=\diam(\Omega)/r_1$ and $c_1:= 2 R^{2s}\,\norm{f}_{L^\infty(\Omega)}/\lambda$. Then, $A_{\mu}^s[\psi_R-u]\ge 0$ in $\Omega$ and, since $u=0$ on $\Omega^c$, $\psi_R-u\ge 0$ on $\Omega^c$. The weak maximum principle for weak solutions, see e.g.\ \cite[Theorem 5.1]{Rut18}, yields $u\le \psi_R$. Repeating this argument with $-u$ yields
		\begin{align*}
			\abs{u(x)}\le \psi_R(x)\le c_1= \frac{2}{\lambda r_1^{2s}}  \diam(\Omega)^{2s}\,\norm{f}_{L^\infty(\Omega)}.
		\end{align*}
	\end{proof}

	\begin{corollary}[Global boundary estimate]\label{cor:global_boundary}
		Suppose either all assumptions from \autoref{th:boundary_regularity} or \autoref{th:boundary_regularity_without_pUB} hold. There exists a positive constant $C= C(s_0,\lambda,\Lambda, \Omega)$ such that any weak solution $u$ to \eqref{eq:main_equation_stable} with $f\in L^\infty(\Omega)$ satisfies
		\begin{equation*}
			\abs{u}\le C \norm{f}_{L^\infty(\Omega)} d_\Omega^s
		\end{equation*}
	\end{corollary}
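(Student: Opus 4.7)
The strategy is to reduce the global estimate to the localized statement of \autoref{prop:distance_upper_bound_with_pUB} (respectively \autoref{prop:distance_upper_bound_without_pUB}) by rescaling, then absorb the tail and $L^\infty$-terms on the right-hand side using \autoref{lem:Linfty_bound}.

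First I would exploit the scaling $A_\mu^s[u(R\,\cdot)](y) = R^{2s} A_\mu^s u(Ry)$. After translating so that $0 \in \Omega$, pick $R>0$ so large that $\tilde{\Omega}:=\Omega/R \subset B_{1/4}(0)$; concretely $R \gtrsim \diam(\Omega)$ suffices. Then $\tilde u(y):=u(Ry)$ solves $A_\mu^s \tilde u = \tilde f$ in $\tilde\Omega$ with $\tilde f(y):=R^{2s}f(Ry)$ and $\tilde u = 0$ on $\tilde\Omega^c$. A change of variables shows that if the original domain is a uniform $\cdini$- (resp.\ $2s$-$\cdini$-)domain with modulus $\omega$, then $\tilde\Omega$ is one with modulus $t \mapsto \omega(Rt)$, which still satisfies \eqref{eq:dini} (resp.\ \eqref{eq:dini_2s}) because $\int_0^1 \omega(Rt)/t\,\d t = \int_0^R \omega(s)/s\,\d s$ is finite by Dini continuity of $\omega$ together with the sublinear growth.

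Now I fix $x \in \tilde\Omega$ and choose $z \in \partial\tilde\Omega$ with $|x-z| = d_{\tilde\Omega}(x)$. Since $\tilde\Omega \subset B_{1/4}$, both $x$ and $z$ lie in $B_{1/2}(0)$, and $\tilde\Omega \cap B_2 = \tilde\Omega$. Hence the hypotheses of \autoref{prop:distance_upper_bound_with_pUB} (or \autoref{prop:distance_upper_bound_without_pUB}) are met, yielding
\begin{equation*}
    |\tilde u(x)| \le C\bigl(\|\tilde f\|_{L^\infty} + \sup_{y\in B_1}\tail_{\nu_s}(\tilde u;y) + \|\tilde u\|_{L^\infty(B_2)}\bigr)\,|x-z|^s.
\end{equation*}
Since $\tilde u \equiv 0$ outside $B_{1/4}$, the tail term is bounded by
\begin{equation*}
    \tail_{\nu_s}(\tilde u;y) \le \|\tilde u\|_{L^\infty(\R^d)}\,\nu_s(B_{1/2}(0)^c) \le C\,\|\tilde u\|_{L^\infty(\R^d)},
\end{equation*}
where the bound on $\nu_s(B_{1/2}^c)$ follows from $\mu(S^{d-1})\le\Lambda$ and the factor $(1-s)$ in the definition of $\nu_s$ (integrating $r^{-1-2s}$ on $(1/2,\infty)$). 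Then \autoref{lem:Linfty_bound} applied to $\tilde u$ gives $\|\tilde u\|_{L^\infty(\R^d)} \le C\,\diam(\tilde\Omega)^{2s}\|\tilde f\|_{L^\infty} \le C\,\|\tilde f\|_{L^\infty}$.

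Combining these estimates yields $|\tilde u(x)| \le C\,\|\tilde f\|_{L^\infty}\,d_{\tilde\Omega}(x)^s$. Undoing the scaling, using $u(x)=\tilde u(x/R)$, $d_\Omega(x)=R\,d_{\tilde\Omega}(x/R)$, and $\|\tilde f\|_{L^\infty}=R^{2s}\|f\|_{L^\infty}$, produces $|u(x)| \le C\,R^s\,\|f\|_{L^\infty(\Omega)}\,d_\Omega(x)^s$, which is the claim with a constant depending on $s_0,\lambda,\Lambda$ and $\Omega$ (through $R \sim \diam(\Omega)$ and the rescaled modulus). The only nontrivial point is verifying invariance of the domain regularity under scaling, but this is immediate from the change of variables in the definition of the chart $\phi$ and the substitution in the Dini integrals.
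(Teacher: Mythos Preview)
Your proof is correct and follows essentially the same strategy as the paper: apply \autoref{prop:distance_upper_bound_with_pUB} (resp.\ \autoref{prop:distance_upper_bound_without_pUB}) at every boundary point, bound the tail by $\norm{u}_{L^\infty}$, and then invoke \autoref{lem:Linfty_bound}. The paper's proof is more terse---it simply writes ``applied to all $z\in\partial\Omega$'' and leaves the rescaling that makes Propositions~6.1/6.2 applicable implicit---whereas you carry out the global rescaling $\tilde\Omega=\Omega/R\subset B_{1/4}$ explicitly and verify that the exterior $\cdini$-property (with modulus $t\mapsto\omega(Rt)$) survives. This is a useful clarification, and the dependence of the final constant on $R\sim\diam(\Omega)$ and on the rescaled Dini data is exactly the $\Omega$-dependence the statement allows.
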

	\begin{proof}
		Firstly, note that the tail term in the bound \eqref{eq:distance_upper_bound} can be estimated by $\norm{u}_{L^\infty}$. If \eqref{ass:pUB} holds or $s_0>1/2$, then we use \autoref{prop:distance_upper_bound_with_pUB}. Else, the \autoref{prop:distance_upper_bound_without_pUB} applied to all $z\in \partial \Omega$ yields
		\begin{align*}
			\abs{u}\le c_1 \big( \norm{f}_{L^\infty} + \norm{u}_{L^\infty(\R^d)} \big) d_\Omega^s.
		\end{align*}
		Now, the result follows from \autoref{lem:Linfty_bound}. 
	\end{proof}
	
	Using \autoref{th:interior_s_regularity}, \autoref{lem:Linfty_bound}, and \autoref{cor:global_boundary}, the proofs of \autoref{th:boundary_regularity} and \autoref{th:boundary_regularity_without_pUB} follows with arguments very close to \cite[Proposition 1.1]{RoSe14}.
	\begin{proof}[Proof of \autoref{th:boundary_regularity} and \autoref{th:boundary_regularity_without_pUB}.]
		We prove the results in two steps. \smallskip
		
		\textit{Step 1.} First we prove the bound
		\begin{equation}\label{eq:proof_main_theorem_claim_1}
			\frac{\abs{u(x)-u(y)}}{\abs{x-y}^{s}}\le c_1 \norm{f}_{L^\infty(\Omega)}.
		\end{equation}
		for any $x,y\in \Omega$ such that $\abs{x-y}\le d_{\Omega}(x)/4$. \smallskip
		
		We consider the function $v(h):= u(x+d_\Omega(x)/2\,h)$. This function satisfies 
		\begin{equation*}
			A_\mu^sv(h)= \frac{d_{\Omega}(x)^{2s}}{2^{2s}} f(x+d_\Omega(x)/2\,h)=:\tilde{f}(h)
		\end{equation*}
		for $y\in B_{3/2}$ in the weak and, thus, also in the distributional sense. The function $v$ is bounded due to \autoref{lem:Linfty_bound}. Therefore, the \autoref{th:interior_s_regularity} applied to $v$ yields
		\begin{equation}\label{eq:proof_main_theorem_step1}
			\begin{split}
				\frac{\abs{u(x)-u(y)}}{\abs{x-y}^{s}}&\le \frac{2^s}{d_\Omega(x)^s} [v]_{C^s(B_{1/2})}\\
				&\le c_1 \frac{2^s}{d_\Omega(x)^s}\Big(\frac{d_{\Omega}(x)^{2s}}{2^{2s}}\norm{\tilde{f}}_{L^\infty(B_1)}+\norm{v}_{L^\infty(B_{2})}+ \sup_{h\in B_1}\tail_{\nu_s}(v;h)\Big)
			\end{split}
		\end{equation}
		for any $x,y\in \Omega$ such that $\abs{x-y}\le d_{\Omega}(x)/4$. By \autoref{cor:global_boundary}, we know
		\begin{align*}
			\norm{v}_{L^\infty(B_2)}=\norm{v}_{L^\infty(B_{d_\Omega(x)}(x))}\le c_2 \norm{\tilde{f}}_{L^\infty(\Omega)} 2^s d_\Omega(x)^s.
		\end{align*}
		Note that $d_\Omega(x+d_\Omega(x)/2\,h)\le d_{\Omega}(x)+\abs{x+d_\Omega(x)/2\, h - x}\le d_{\Omega}(x)(1+\abs{h})$ for any $h\in \R^d$. Thus, again due to \autoref{cor:global_boundary} applied to $u$ we find 
		\begin{align*}
			\abs{v(h)}\le c_2 \norm{f}_{L^\infty(\Omega)} d_{\Omega}(x+d_{\Omega}(x)/2\, h)^s\le  c_2 d_\Omega(x)^{s}(1+\abs{h})^s\norm{f}_{L^\infty(\Omega)}.
		\end{align*}
		This allows us to estimate the tail term as follows:
		\begin{align*}
			\sup\limits_{h\in B_1}\tail_{\nu_s}(v;h)&\le c_2 d_{\Omega}(x)^s \sup\limits_{h\in B_1}\int\limits_{B_{1/2}(0)^c} (1+\abs{h+r\theta})^s \nu_s(\d \theta)\\
			&= 2c_2 d_{\Omega}(x)^s (1-s)\mu(S^{d-1})\int\limits_{1/2}^\infty \frac{(2+\abs{r})^s}{\abs{r}^{1+2s}}\d r \\
			&\le 2\, 5^s d_{\Omega}(x)^s (1-s)\Lambda\frac{(1/2)^{-s}}{s}
		\end{align*}
		These observations together with \eqref{eq:proof_main_theorem_step1} yield the claim. \medskip
		
		\textit{Step 2.} Now, we prove that the inequality \eqref{eq:proof_main_theorem_claim_1} with a slightly bigger constant holds for all $x,y\in \R^d$. \smallskip
				
		If $x\notin \Omega$ or $y\notin \Omega$, then this follows directly from \autoref{cor:global_boundary}. It remains to prove the bound for $x,y\in \Omega$ such that $\abs{x-y}>\max\{d_{\Omega}(x), d_\Omega(y)\}/4$. In this, case the estimate again follows from \autoref{cor:global_boundary}. Using the triangle inequality and \autoref{cor:global_boundary}, we write
		\begin{equation*}
			\abs{u(x)-u(y)}\le \abs{u(x)}+\abs{u(y)}\le c_3 \norm{f}_{L^\infty}\big( d_\Omega(x)^s+d_\Omega(y)^s \big)\le 2 c_3 4^s \norm{f}_{L^\infty} \, \abs{x-y}^{s}.
		\end{equation*}
	\end{proof}
		
	\subsection{Hopf boundary lemma}\label{sec:hopf}
	
	\begin{proposition}[localized Hopf-type boundary lemma I]\label{prop:distance_lower_bound_with_pUB}
		Let $\Omega \subset \R^d$ be an open set such that $\Omega$ satisfies the interior $\cdini$-property at a boundary point $z\in B_{1/2}\cap \partial \Omega$. Let $s_0\in (0,1)$. Assume \eqref{ass:pUB} or $s_0>1/2$. There exist positive constants $C, \eps_0$ such that for any $s\in [s_0,1)$ and any distributional super-solution $u\in C(B_1\cap \Omega)$ to \eqref{eq:frac_lap_sup_solution_localized} we know that 
		\begin{equation}\label{eq:distance_lower_bound_local}
			u(x)\ge \abs{x-z}^s\,C\,\inf\{ u(y)\mid y\in B_{1/2}\cap \Omega, \distw{y}{\partial \Omega}\ge \eps_0 \} 
		\end{equation}
		for any $x\in B_{1/2}\cap \Omega$ inside of a non-tangential cone with apex at $z$. 
	\end{proposition}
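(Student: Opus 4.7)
The plan is to compare $u$ from below with a suitable sub-solution barrier supported on an interior touching $\cdini$-domain at $z$. Set $m := \inf\{u(y)\mid y\in B_{1/2}\cap\Omega,\ d_\Omega(y)\ge \eps_0\}$, where $\eps_0$ will be the barrier constant produced below. If $m \le 0$ the claim is trivial; otherwise, since $A_s u \ge 0$ on $\Omega\cap B_1$ with $u\ge 0$ on $(\Omega\cap B_1)^c$, the weak maximum principle yields $u\ge 0$ on $\R^d$.

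Using the interior $\cdini$-property at $z$ together with a rescaling, I fix a bounded $\cdini$-domain $D$ with $\overline{D}\setminus\{z\}\subset \Omega\cap B_{1/2}$ and $z\in\partial D$. Under the hypotheses (either \eqref{ass:pUB} holds, or $s_0>1/2$, in which case \autoref{rem:special_dini_s_ge_12} shows that $D$ is also a $2s_0$-$\cdini$-domain), the appropriate barrier proposition from \autoref{sec:higher_dimensional} — namely \autoref{prop:subharmonic_barrier_pUB} or \autoref{prop:subharmonic_barrier_without_pUB} — produces constants $\eps_0, C_1, C_2>0$ and $b_-\in H^s(\R^d)$, vanishing outside $D$, with $A_s b_-\le -1$ on $U:=\{x\in D\mid d_D(x)<\eps_0\}$ and $C_1 d_D^s\le b_-\le C_2 d_D^s$.

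I then set $\phi := \eps_1 m\, b_-$ with $\eps_1 := (C_2 (\diam D)^s)^{-1}$, so $\phi \le m$ globally and $\phi = 0$ off $D$. I verify that $w := u-\phi \ge 0$ on $U^c$: on $D^c$ one has $\phi = 0$ and $u \ge 0$; on $D\setminus U$ every $y$ satisfies $d_\Omega(y) \ge d_D(y) \ge \eps_0$ with $y\in B_{1/2}$ (since $D \subset B_{1/2}$), hence $u(y) \ge m \ge \phi(y)$. Moreover $A_s w \ge \eps_1 m > 0$ on $U$ in the distributional sense since $A_s \phi \le -\eps_1 m$ there while $A_s u \ge 0$. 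The comparison principle for distributional super-solutions referenced in \autoref{rem:solution_concept_main_results} (cf.\ \cite[Lemma~2.3.5]{RoFe24}) then yields $u \ge \phi \ge \eps_1 C_1 m\, d_D^s$ on $U$. Because $\partial D$ is $C^1$ at $z$, for any prescribed non-tangential cone $\cC_z$ with apex $z$ there exists $c>0$ with $d_D(x) \ge c|x-z|$ for $x \in \cC_z \cap D$ near $z$; combining gives $u(x) \ge \eps_1 C_1 c^s m |x-z|^s$ on $\cC_z \cap U$. The remaining points $x \in \cC_z \cap B_{1/2} \cap \Omega$ at a definite distance from $z$ are handled directly via $u \ge m$ and $|x-z| \le 1$.

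The main obstacle is justifying the distributional comparison $u \ge \phi$ on $U$: because $\phi$ has only $H^s$-regularity and $u$ is only continuous, one must appeal to a nonlocal maximum principle tailored to this setting, which is exactly what is cited in \autoref{rem:solution_concept_main_results}. A secondary technical point is ensuring that the touching domain $D$ can be arranged inside $B_{1/2}$ while preserving its $\cdini$-property and the touching condition at $z$; this is handled by a standard rescaling of the equation $A_s u = f$.
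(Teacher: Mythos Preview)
Your proof is correct and follows essentially the same approach as the paper: pick the interior touching $\cdini$-domain $D$, use the sub-barrier $b_-$ from \autoref{prop:subharmonic_barrier_pUB} (or \autoref{prop:subharmonic_barrier_without_pUB} when $s_0>1/2$ via \autoref{rem:special_dini_s_ge_12}), scale it below $u$ on the complement of the boundary strip, and apply the maximum principle for distributional super-solutions. Two cosmetic remarks: first, the sentence ``if $m\le 0$ the claim is trivial'' tacitly uses $u\ge 0$, so the weak maximum principle should be invoked before that case split rather than only in the ``otherwise'' branch; second, once you have $u\ge\phi$ on $U$ and $u\ge\phi$ on $D\setminus U$ (both of which you checked), the bound $u\ge\eps_1 C_1 m\,d_D^s$ holds on all of $D$, so the separate handling of points ``at a definite distance from $z$'' is unnecessary.
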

	\begin{proof}
		If $u=0$, then there is nothing to show. So we assume the contrary. By the strong maximum principle for continuous solutions, see \cite{RoFe24}, we know $u>0$ in $\Omega\cap B_1$. Due to another application of the maximum principle, we we assume without loss of generality that $u=0$ on $(\Omega\cap B_1)^c$. Let $D_z$ be the $\cdini$-domain adapted to $z$ and $\Omega$ by the interior $\cdini$-property. We assume that $D_z$ fits into $B_{1/2}\cap \Omega$, i.e.\ after translation and scaling. Further, let $b_-$ be the function and $\eps_0, C_1, C_2$ be the constants from \autoref{prop:subharmonic_barrier_pUB} if \eqref{ass:pUB} holds and the function from \autoref{prop:subharmonic_barrier_without_pUB} if $s_0>1/2$ but not \eqref{ass:pUB}. Recall \autoref{rem:special_dini_s_ge_12}. \smallskip
		
		Since $u$ is continuous and positive in $\Omega\cap B_1$, the constant 
		\begin{equation*}
			c_u:= \inf\{ u(y)\mid y\in B_{1/2}\cap D_z, \distw{y}{\partial D_z}\ge \eps_0 \}
		\end{equation*}
		is positive. We define $\psi:= \frac{c_u}{C_2} b_-$. By the choice of $c_u$ and the properties of $b_-$ we deduce $\psi\le u$ in $\{ u(y)\mid y\in D_z, \distw{y}{\partial (D_z\cap B_1)}\ge \eps_0 \}$. This may be seen as follows. If $y\notin D_z$, then $b_-=0$ and the estimate is trivial by the nonnegativity of $u$. If $y\in D_z$ but $\distw{y}{D_z^c}\ge \eps_0$, then 
		\begin{equation*}
			u(y)\ge c_u\ge\frac{c_u}{C_2} C_2(1/2)^s\ge \frac{c_u}{C_2} C_2(\diam(D_z))^s\ge \frac{c_u}{C_2} C_2\distw{y}{D_z^c}^s\ge \frac{c_u}{C_2} b_-(y)= \psi(y).
		\end{equation*}
		Furthermore, using the properties of $b_-$ from \autoref{prop:subharmonic_barrier_pUB} respectively \autoref{prop:subharmonic_barrier_without_pUB} we find
		\begin{align*}
			As[u-\psi](x)\ge 1 \text{ in } \{ y\in D_z\mid \distw{y}{D_z^c}<\eps_0 \} \text{ in the distributional sense.}
		\end{align*}
		The maximum principle, see \cite{Sil07}, \cite[Lemma 2.3.5]{RoFe24}, yields $u\ge \psi$ which implies the statement. 
	\end{proof}
	\begin{proposition}[localized Hopf boundary lemma II]\label{prop:distance_lower_bound_without_pUB}
		Let $s_0\in (0,1)$. We fix an open set $\Omega \subset \R^d$ such that $\Omega$ satisfies the interior $2s_0$-$\cdini$-property at a boundary point $z\in B_{1/2}\cap \partial \Omega$. Then there exist positive constants $C, \eps_0$ such that for any $s\in [s_0,1)$ and any distributional super-solution $u\in C(B_1\cap \Omega)$ to \eqref{eq:frac_lap_sup_solution_localized} satisfies \eqref{eq:distance_lower_bound_local}. 
	\end{proposition}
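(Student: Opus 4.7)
The plan is to mirror the proof of \autoref{prop:distance_lower_bound_with_pUB}, replacing the barrier from \autoref{prop:subharmonic_barrier_pUB} with the one from \autoref{prop:subharmonic_barrier_without_pUB}, since that is exactly the tool adapted to the $2s$-$\cdini$-setting without requiring \eqref{ass:pUB}. First, if $u\equiv 0$ there is nothing to prove, so we assume $u\not\equiv 0$. The strong maximum principle for continuous distributional super-solutions, as in \cite{RoFe24}, yields $u>0$ in $\Omega\cap B_1$. Up to truncation against another application of the maximum principle, we may also assume $u\equiv 0$ on $(\Omega\cap B_1)^c$, which will not affect the lower bound we are after.

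Next, let $D_z\subset \Omega$ be the uniform $2s_0$-$\cdini$-domain supplied by the interior $2s_0$-$\cdini$-property at $z$, arranged (via translation and scaling, shrinking the constant if necessary) so that $D_z\subset B_{1/2}\cap \Omega$ and $z\in \partial D_z$. By \autoref{lem:special_2s_0_dini_bigger_s}, the modulus of continuity of $D_z$ satisfies \eqref{eq:dini_2s} for every $s\in [s_0,1)$, so \autoref{prop:subharmonic_barrier_without_pUB} applies uniformly in $s$ and yields constants $\eps_0, C_1, C_2>0$ together with a function $b_-\in H^s(\R^d)$ satisfying $A_s b_-\le -1$ in $\{x\in D_z\mid d_{D_z}(x)<\eps_0\}$ and $C_1 d_{D_z}^s\le b_-\le C_2 d_{D_z}^s$. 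Set
\begin{equation*}
c_u:=\inf\{u(y)\mid y\in B_{1/2}\cap D_z,\ d_{D_z}(y)\ge \eps_0\},
\end{equation*}
which is strictly positive by continuity and strict positivity of $u$, and define the candidate barrier $\psi:=(c_u/C_2)\,b_-$.

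Now we verify $\psi\le u$ outside the thin near-boundary layer. If $y\notin D_z$, then $b_-(y)=0$ and $\psi(y)=0\le u(y)$ by nonnegativity. If $y\in D_z$ with $d_{D_z}(y)\ge \eps_0$, then $\psi(y)\le (c_u/C_2)\,C_2\,d_{D_z}(y)^s\le c_u\,\diam(D_z)^s\le c_u \le u(y)$ after normalising $\diam(D_z)\le 1$, which we may do by the aforementioned scaling. Combining this with $A_s[u-\psi]\ge 1>0$ in $\{y\in D_z\mid d_{D_z}(y)<\eps_0\}$ (in the distributional sense) and applying the comparison principle for continuous distributional super-solutions from \cite{Sil07} and \cite[Lemma 2.3.5]{RoFe24} gives $u\ge \psi$ on this thin layer, hence on all of $\R^d$. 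Using $\psi(x)\ge (c_u/C_2)\,C_1\,d_{D_z}(x)^s$ and the fact that $d_{D_z}(x)\gtrsim \abs{x-z}$ for $x$ inside a non-tangential cone with apex at $z$ (a standard consequence of the $C^1$-graph representation of $\partial D_z$), we conclude \eqref{eq:distance_lower_bound_local}.

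There is essentially no obstruction once the barrier \autoref{prop:subharmonic_barrier_without_pUB} is in place; the only point that must be handled carefully is the uniform-in-$s$ choice of $\eps_0$ and the non-tangential cone bound $d_{D_z}(x)\gtrsim \abs{x-z}$, both of which are standard for $C^1$-graph domains and do not depend on the Dini improvement \eqref{eq:dini_2s}. In particular, the whole point of introducing the $2s$-$\cdini$ class in \autoref{def:c1dini_2s_domain} is precisely to make the barrier construction robust in $s$ when \eqref{ass:pUB} is unavailable, and here we simply harvest that robustness.
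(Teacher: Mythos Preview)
Your proof is correct and follows essentially the same approach as the paper: mirror the argument of \autoref{prop:distance_lower_bound_with_pUB}, invoke \autoref{lem:special_2s_0_dini_bigger_s} to ensure the $2s_0$-$\cdini$ modulus works for every $s\in[s_0,1)$, and replace the barrier from \autoref{prop:subharmonic_barrier_pUB} by the one from \autoref{prop:subharmonic_barrier_without_pUB}. You have in fact written out more detail than the paper's one-line proof (which, incidentally, appears to contain a cross-reference typo pointing to \autoref{prop:distance_upper_bound_with_pUB} and \autoref{prop:supharmonic_barrier_without_pUB} rather than the lower-bound proposition and the subharmonic barrier).
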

	\begin{proof}
		After acknowledging \autoref{lem:special_2s_0_dini_bigger_s}, the proof uses the same ideas as the one of \autoref{prop:distance_upper_bound_with_pUB} but always using \autoref{prop:supharmonic_barrier_without_pUB} instead of \autoref{prop:supharmonic_barrier_pUB}. 
	\end{proof}
	\begin{proof}[Proof of \autoref{prop:hopf_with_pUB}]
		If $u$ is zero, then there is nothing to show. If $u$ is not zero, then $u$ is positive in the interior of $\Omega\cap B_1$ by the strong maximum principle. Thus, the result follows from \autoref{prop:distance_lower_bound_with_pUB} or \autoref{prop:distance_lower_bound_without_pUB}. 
	\end{proof}
	\begin{corollary}[global Hopf boundary lemma I]\label{prop:global_hopf_pUB}
		Let $\Omega \subset \R^d$ be an bounded open set such that $\Omega$ satisfies the interior $\cdini$-property at every boundary point $\partial \Omega$ with the same modulus of continuity. Let $s_0\in (0,1)$. Assume \eqref{ass:pUB} or $s_0>1/2$. There exist positive constants $C, \eps_0$ such that for any $s\in [s_0,1)$ and any distributional super-solution $u\in C(\Omega)$ to 
		\begin{equation}\label{eq:frac_lap_sup_solution_global}
			\begin{split}
				A_s u &\ge 0 \text{ in }\Omega,\\
				u &\ge 0 \text{ on }\Omega^c,
			\end{split}
		\end{equation}
		we know that
		\begin{equation}\label{eq:distance_lower_bound_global}
			u(x)\ge d_\Omega(x)^s\,C\,\inf\{ u(y)\mid y\in \Omega, \distw{y}{\partial \Omega}\ge \eps_0 \} 
		\end{equation}
		for any $x\in \Omega$. 
	\end{corollary}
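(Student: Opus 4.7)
The plan is to reduce the global statement to repeated applications of the localized Hopf lemma, \autoref{prop:distance_lower_bound_with_pUB} (or \autoref{prop:distance_lower_bound_without_pUB}), at each boundary point of $\Omega$. First I would dispose of the trivial case $u\equiv 0$. Otherwise, by the strong maximum principle for distributional super-solutions (cited just after \eqref{eq:frac_lap_sup_solution_global} in the proofs of \autoref{prop:distance_lower_bound_with_pUB}/\autoref{prop:distance_lower_bound_without_pUB}) I conclude $u>0$ inside $\Omega$. Continuity of $u$ on $\Omega$, positivity, and the compactness of $\{y\in\Omega\mid d_\Omega(y)\ge\eps_0\}$ then ensure that the constant
\begin{equation*}
c_u := \inf\{u(y)\mid y\in\Omega,\ d_\Omega(y)\ge\eps_0\}
\end{equation*}
is strictly positive, where $\eps_0$ will be fixed below.

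Next, I would split into two regimes according to the distance to the boundary. If $d_\Omega(x)\ge \eps_0$, then the bound $u(x)\ge c_u$ is immediate, and since $d_\Omega(x)^s\le \diam(\Omega)^s$, one has $u(x)\ge c_u\, d_\Omega(x)^s/\diam(\Omega)^s$, which matches the claim up to the constant $C$. For $x$ with $d_\Omega(x)<\eps_0$, I would pick a nearest boundary point $z_x\in\partial\Omega$ so that $\abs{x-z_x}=d_\Omega(x)$; the point $x$ then lies on the inward normal ray at $z_x$ and in particular inside any non-tangential cone with apex at $z_x$. After translating $z_x$ into $B_{1/2}$ and rescaling by a factor controlled uniformly by the common modulus of continuity and localization radius from \autoref{def:c1dini_domain}/\autoref{def:c1dini_2s_domain}, one applies \autoref{prop:distance_lower_bound_with_pUB} (under \eqref{ass:pUB} or $s_0>1/2$) and, after undoing the rescaling using $2s$-scale invariance of $A_\mu^s$, obtains
\begin{equation*}
u(x)\ge C\, \abs{x-z_x}^s \cdot \inf\{u(y)\mid y\in B_{1/2}(z_x)\cap\Omega,\ d_\Omega(y)\ge \eps_0\} \ge C\, d_\Omega(x)^s\, c_u.
\end{equation*}
Here it is crucial that the uniform modulus of continuity assumed globally yields a single constant $C$ and threshold $\eps_0$ independent of $z_x$; this is exactly the content of item iv) in \autoref{def:ext_int_property}, which ensures that the barriers from \autoref{prop:subharmonic_barrier_pUB}/\autoref{prop:subharmonic_barrier_without_pUB} used inside \autoref{prop:distance_lower_bound_with_pUB}/\autoref{prop:distance_lower_bound_without_pUB} may be taken with uniform constants $C_1,C_2,\eps_0$.

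The main obstacle is the bookkeeping needed to verify that the localized hypothesis \eqref{eq:frac_lap_sup_solution_localized} is satisfied after localizing and rescaling at each $z_x$: one has to check that the rescaled $u$ remains a distributional super-solution on the rescaled ball $B_1$ and that, since $u\ge 0$ on $\Omega^c$ holds globally and $u\ge 0$ throughout $\Omega$ by the strong maximum principle, the inequality $u\ge 0$ holds on $(\Omega\cap B_1)^c$ after localization. Combining both regimes and absorbing the factor $\diam(\Omega)^{-s}$ into the constant gives \eqref{eq:distance_lower_bound_global} uniformly for $s\in[s_0,1)$ as required.
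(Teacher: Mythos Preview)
Your proposal is correct and follows essentially the same approach as the paper: the paper's proof consists of two sentences---first apply the maximum principle to obtain $u\ge 0$ in $\R^d$, then invoke \autoref{prop:distance_lower_bound_with_pUB} at every boundary point---and you have simply fleshed out the details (the split into the two regimes $d_\Omega(x)\gtrless\eps_0$, the choice of the nearest boundary point so that $x$ lies in a non-tangential cone, and the bookkeeping for uniformity of constants). Your additional care in noting that $x$ lies on the inward normal at $z_x$, hence inside any non-tangential cone, is a point the paper leaves implicit.
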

	\begin{proof}
		Using the maximum principle for continuous distributional solutions \cite{Sil07}, \cite[Lemma 2.3.5]{RoFe24}, we find $u\ge 0$ in $\R^d$. Now, an application of \autoref{prop:distance_lower_bound_with_pUB} at every boundary point $z\in \partial \Omega$ yields the desired result.
	\end{proof}
	\begin{corollary}[global Hopf boundary lemma II]\label{prop:global_hopf_without_pUB}
		Let $s_0\in (0,1)$. We fix an open set $\Omega \subset \R^d$ such that $\Omega$ satisfies the interior $2s_0$-$\cdini$-property uniformly at every boundary point in $\partial \Omega$. Then there exist positive constants $C, \eps_0$ such that for any $s\in [s_0,1)$ and any weak super-solution $u\in C(\Omega)$ to \eqref{eq:frac_lap_sup_solution_global} satisfies \eqref{eq:distance_lower_bound_global}.
	\end{corollary}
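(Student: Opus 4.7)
The plan is to reduce the global statement to the localized Hopf lemma \autoref{prop:distance_lower_bound_without_pUB} applied at every boundary point, exactly as the author does in \autoref{prop:global_hopf_pUB} for the case with \eqref{ass:pUB}. The key input making this reduction work is the uniformity of the interior $2s_0$-$\cdini$-property, which by \autoref{def:ext_int_property} (iv) supplies a common modulus of continuity, localization radius, and diameter of the interior touching $2s_0$-$\cdini$-domains $D_z$ at all $z \in \partial \Omega$. As a result, the barrier functions produced by \autoref{prop:subharmonic_barrier_without_pUB} have constants $\eps_0, C_1, C_2$ that can be chosen independently of $z$, and hence the localized constant $C$ from \autoref{prop:distance_lower_bound_without_pUB} is also uniform in $z$.

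First I would apply the maximum principle for continuous distributional super-solutions (\cite{Sil07}, \cite[Lemma 2.3.5]{RoFe24}) to the exterior datum $u \ge 0$ on $\Omega^c$ to conclude that $u \ge 0$ on all of $\R^d$, so that the sign hypothesis needed by \autoref{prop:distance_lower_bound_without_pUB} is available after suitable rescaling. If $u \equiv 0$ the statement is trivial, so we may assume $u \not\equiv 0$; then the strong maximum principle gives $u > 0$ in $\Omega$ and, in particular,
\begin{equation*}
c_u := \inf\{ u(y) \mid y \in \Omega,\ \distw{y}{\partial \Omega} \ge \eps_0 \} > 0,
\end{equation*}
provided $\eps_0$ is small enough relative to $\diam(\Omega)$ so that the inner set is nonempty and compactly contained in $\Omega$.

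Next, fix an arbitrary $x \in \Omega$ and choose a nearest boundary point $z \in \partial \Omega$, so that $|x - z| = d_\Omega(x)$. After translating and rescaling to place $z$ into $B_{1/2}$ and shrinking the picture to fit into $B_1$ (which only affects constants by a factor depending on $\diam(\Omega)$ and the uniform localization radius), the hypotheses of \autoref{prop:distance_lower_bound_without_pUB} are satisfied with the same modulus of continuity at $z$. Applying that proposition therefore yields
\begin{equation*}
u(x) \ge C\, |x - z|^s \,\inf\{ u(y) \mid y \in B_{1/2} \cap \Omega,\ \distw{y}{\partial \Omega} \ge \eps_0 \} \ge C\, d_\Omega(x)^s \, c_u
\end{equation*}
inside a non-tangential cone at $z$; the point $x$ itself lies in such a cone since $z$ is the nearest boundary point, so $x - z$ is (up to the regularity of $\partial D_z$) aligned with the inward normal direction at $z$.

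The only real subtlety — which I would flag as the main obstacle — is the uniformity of the constants $C$ and $\eps_0$ across the choice of $z$. This is handled by \autoref{lem:special_2s_0_dini_bigger_s} combined with the hypothesis that $\Omega$ satisfies the interior $2s_0$-$\cdini$-property uniformly: the modulus $\omega$ and the localization radius can be chosen once and for all, and all constants entering \autoref{prop:subharmonic_barrier_without_pUB} depend only on $s_0$, $\omega$, and $\diam(\Omega)$. Taking the infimum over $z \in \partial \Omega$ of the resulting pointwise bounds then gives \eqref{eq:distance_lower_bound_global} with constants independent of $s \in [s_0, 1)$ and of the specific measure $\mu$, completing the proof.
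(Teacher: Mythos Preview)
Your proposal is correct and follows exactly the approach the paper intends: apply the maximum principle to get $u\ge 0$ on $\R^d$, then invoke \autoref{prop:distance_lower_bound_without_pUB} at every boundary point, using the uniformity of the interior $2s_0$-$\cdini$-property (and \autoref{lem:special_2s_0_dini_bigger_s}) to make the constants independent of $z$. This mirrors the proof of \autoref{prop:global_hopf_pUB} verbatim with \autoref{prop:distance_lower_bound_without_pUB} in place of \autoref{prop:distance_lower_bound_with_pUB}; your additional remarks on the nearest-point projection landing in a non-tangential cone and on the uniformity of $\eps_0,C$ just make explicit what the paper leaves implicit.
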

	

\end{document}